\theoremstyle{plain}
  \newtheorem{thm}{Theorem}
  \newtheorem{defn}{Definition}
  \newtheorem{prop}{Proposition}
  \newtheorem{cor}[prop]{Corollary}
  \newtheorem{lem}[prop]{Lemma}
\theoremstyle{definition}
  \newtheorem{example}[prop]{Example}
  \newtheorem{rem}{Remark}
\renewcommand{\H}{\mathcal{H}}
\newcommand{\blf}{\mathbf{f}}
\newcommand{\ba}{\mathbf{a}}
\newcommand{\bb}{\mathbf{b}}
\newcommand{\bc}{\mathbf{c}}
\newcommand{\blg}{\mathbf{g}}
\newcommand{\im}{\mathrm{im}}
\newcommand{\dor}{\sigma}
\newcommand{\R}{\mathbb{R}}
\newcommand{\T}{\mathbb{T}}
\newcommand{\C}{\mathcal{C}}
\newcommand{\B}{\mathcal{B}}
\newcommand{\even}{\mathrm{even}}
\newcommand{\odd}{\mathrm{odd}}
\newcommand{\mm}[2]{m^{#1}_{#2}\Gamma\Delta}
\newcommand{\pp}[2]{P^{#1}_{#2}\Gamma\Delta}
\newcommand{\ppo}[2]{P^{#1}_{#2}\Gamma_0\Delta_0}
\newcommand{\ppt}[2]{P^{#1}_{#2}\Theta}
\newcommand{\CH}{\C_{\mathrm{Horn}}}
\newcommand{\ehor}{\bar ET}
\newcommand{\bint}{\B_{GZ}}
\newcommand{\gr}{\mathrm{gr}}
\begin{document}
\author{Anton Alekseev}

\address{Anton Alekseev, Department of Mathematics, University of Geneva, 2-4 rue du Li\`evre,
c.p. 64, 1211 Gen\`eve 4, Switzerland}

\email{Anton.Alekseev@unige.ch}

\author{Masha Podkopaeva}

\address{Maria Podkopaeva, Department of Mathematics, University of Geneva, 2-4 rue du Li\`evre,
c.p. 64, 1211 Gen\`eve 4, Switzerland}

\email{Maria.Podkopaeva@unige.ch}

\author{Andras Szenes}

\address{Andras Szenes, Department of Mathematics, University of Geneva, 2-4 rue du Li\`evre,
c.p. 64, 1211 Gen\`eve 4, Switzerland}

\email{Andras.Szenes@unige.ch}

\title[]{The Horn problem and planar networks}

\begin{abstract} The problem of determining the set of possible
  eigenvalues of 3 Hermitian matrices that sum up to zero is known as the
  Horn problem.  The answer is a polyhedral cone, which, following
  Knutson and Tao, can be described as the projection of a simpler cone
  in the space of  triangular tableaux (or {\em hives}) to the boundary nodes of the tableau.

  In this paper, we introduce a combinatorial problem defined in terms
  of certain weighted planar graphs giving rise to exactly
  the same polyhedral cone. In our framework, the values at the inner nodes
  of the triangular tableaux receive a natural interpretation. Other problems of linear algebra fit into the same scheme, among them the Gelfand--Zeitlin problem.
 Our approach is   motivated by the works of Fomin and Zelevinsky on total positivity and by the ideas of  tropicalization.

  \end{abstract}

\maketitle
\section{Introduction}

This article is motivated by the following classical problem of
linear algebra: under which conditions do three $n$-tuples of
ordered real numbers $\lambda_1\geq \dots\geq \lambda_n$, $\mu_1\geq
\dots\geq \mu_n$ and $\nu_1\geq \dots\geq \nu_n$ serve as the sets of
eigenvalues of three Hermitian $n$-by-$n$ matrices, $A$, $B$, and $C$, related by the equality $C=A+B$?

This problem has a long history (see \cite{fulton, brion} for
background). The first nontrivial necessary
condition,
\begin{equation*} \label{intro:e1}
\nu_1 \leq \lambda_1 + \mu_1,
\end{equation*}
was already known in the nineteenth century. Beginning with the work of
Weyl in 1912 \cite{weyl}, different sets of inequalities of
this type were found. Finally, in 1962, Horn put forward a very
complex, recursively defined set of conditions of the form
\[   \sum_{k\in K}  \nu_k\leq \sum_{i\in I} \lambda_I + \sum_{j\in J}
\mu_j,\text{ where }I,J,K\subset\{1,2,\dots, n\},
\]
which, together with the obvious equality  $ {\rm Tr}(A) + {\rm Tr}(B)
= {\rm Tr}(C)$, he conjectured to be necessary and sufficient.

Horn's conjecture was proved by Klyachko \cite{Klyachko}
and by Knutson and Tao \cite{KT}, \cite{puzzles}. Since then several other proofs
were given (e.g. \cite{KLM}), and the results were extended to
other problems of similar type (cf. \cite{AgnWood},
\cite{BelKum}, \cite{BerSja}).

In this paper, we will use the ``hive model'' of Horn's polyhedral
cone due to Knutson and Tao (see \cite{KT}, \cite{Buch}). Let $T_n$  be the $n$th order regular
triangulation of the equilateral triangle.  For real $n$-tuples
$\lambda_1 \geq \dots \geq \lambda_n$, $\mu_1 \geq \dots \geq \mu_n$,
and $\nu_1 \geq \dots \geq \nu_n$ satisfying
\begin{equation}\label{traces}
\sum_i\lambda_i+\sum_j\mu_j=\sum_k\nu_k,
\end{equation}
 we associate real numbers
to the boundary nodes of $T_n$ in the way demonstrated in Figure
\ref{intro:tri} for the case of $n=3$ (see Section \ref{sec:first} for
details).

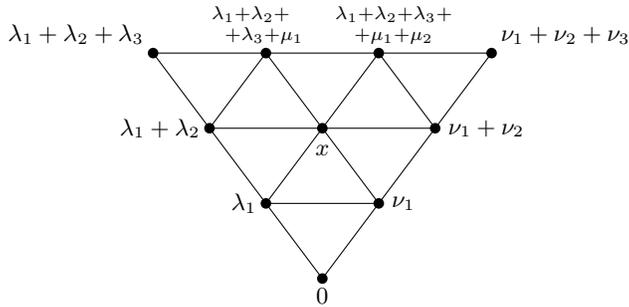
\begin{figure}[h]
\begin{tikzpicture}
\fill[black] (0,4) circle (2pt) node[above left,font=\small]{$\lambda_1+\lambda_2+\lambda_3$};
\fill[black] (1.5,4) circle (2pt);
\draw (1.3,4) node [above] {$\lambda_1+\lambda_2+\atop{\hskip 1em +\lambda_3+\mu_1}$};
\fill[black] (3,4) circle (2pt);
\draw (3.2,4) node[above]{$\lambda_1+\lambda_2+\lambda_3+\atop{+\mu_1+\mu_2}$};
\fill[black] (4.5,4) circle (2pt) node[above right,font=\small]{$\nu_1+\nu_2+\nu_3$};
\fill[black] (0.75,3) circle (2pt) node[left,font=\small]{$\lambda_1+\lambda_2$};
\fill[black] (2.25,3) circle (2pt);
\draw (2.25,2.9)  node[below,font=\small]{$x$};
\fill[black] (3.75,3) circle (2pt);
\draw (3.8,3) node[right,font=\small]{$\nu_1+\nu_2$};
\fill[black] (1.5,2) circle (2pt) node[left,font=\small]{$\lambda_1$};
\fill[black] (3,2) circle (2pt);
\draw (3.05,2) node[right,font=\small]{$\nu_1$};
\fill[black] (2.25,1) circle (2pt) node[below,font=\small]{$0$};
\draw (0,4) -- (4.5,4);
\draw (0.75,3)  -- (3.75,3);
\draw (1.5,2) -- (3,2);
\draw (0,4) -- (2.25,1) -- (4.5,4);
\draw (1.5,4) -- (3,2);
\draw (3,4) -- (3.75,3);
\draw (0.75,3) -- (1.5,4);
\draw (1.5,2) -- (3,4);
\end{tikzpicture}
\caption{Triangulation $T_3$.}\label{intro:tri}
\end{figure}

The Knutson--Tao theorem states that the three $n$-tuples may be
realized as ordered sets of eigenvalues of Hermitian matrices $A, B$, and
$C=A+B$ if and only if they satisfy the {\bf hive condition}:

\vskip 0.2cm

{\sl There exists a concave function $f$ defined on the
equilateral triangle,  linear on each small triangle of the
triangulation, and whose values at the boundary nodes of $T_n$
coincide with the values we ascribed to these nodes above. }

\vskip 0.2cm

Naturally, such a function $f$ is uniquely determined by its values on
the nodes of $T_n$. The condition of concavity translates into a set
of inequalities parametrized by the internal edges, or, equivalently,
by elementary rhombi of the triangulation. An example is the
inequality
\begin{equation} \label{intro:e2}
x \leq \lambda_1 + \nu_1,
\end{equation}
where $x$ is the value of $f$ at the central node in Figure
\ref{intro:tri}. 

In the present paper  motivated by constructions in the theory of
cluster algebras and total positivity (see \cite{FZ}), we introduce a
combinatorial framework  where the inequalities of the hive model
arise in a natural way. Instead of Hermitian matrices, we consider
certain oriented planar graphs, called {\em planar networks}, whose
edges are weighted by real (or tropical) numbers. An example of a
weighted planar network is shown in Figure \ref{intro:net}.

\begin{figure}[h]
\begin{tikzpicture}
\tikzset{->-/.style={decoration={markings, mark=at position .5 with {\arrow[scale=1.3]{>}}},postaction={decorate}}}
\draw[->-] (0,0) -- (2,0);
\draw[->-] (2,0) -- (6,0);
\draw[->-] (0,1) -- (1,1);
\draw[->-] (1,1) -- (3,1);
\draw[->-] (3,1) -- (6,1);
\draw[->-] (0,2) -- (5,2);
\draw[->-] (5,2) -- (6,2);
\draw[->-] (1,1) -- (2,0);
\draw[->-] (3,1) -- (5,2);
\draw (1,0.2) node[font=\small]{$0$};
\draw (4,0.2) node[font=\small]{$1$};
\draw (1.8,0.5) node[font=\small]{$1$};
\draw (0.5,1.2) node[font=\small]{$1$};
\draw (2,1.2) node[font=\small]{$1$};
\draw (4.5,1.2) node[font=\small]{$-1$};
\draw (3.8,1.6) node[font=\small]{$2$};
\draw (2.5,2.2) node[font=\small]{$1$};
\draw (5.5,2.2) node[font=\small]{$0$};
\draw[ultra thick,red] (0,1.02) -- (2.98,1.02) -- (4.98,2.02) -- (6,2.02);
\fill[black] (0,0) circle (1.5pt);
\fill[black] (0,1) circle (1.5pt);
\fill[black] (0,2) circle (1.5pt);
\fill[black] (6,0) circle (1.5pt);
\fill[black] (6,1) circle (1.5pt);
\fill[black] (6,2) circle (1.5pt);
\fill[black] (1,1) circle (1.5pt);
\fill[black] (2,0) circle (1.5pt);
\fill[black] (3,1) circle (1.5pt);
\fill[black] (5,2) circle (1.5pt);
\end{tikzpicture}
 \caption{A planar network. The maximal path is shown in thick red.}\label{intro:net}
\end{figure}
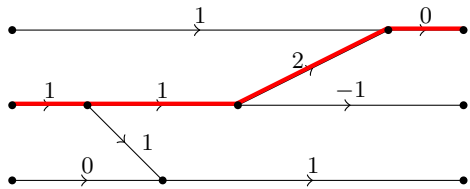

In this setup, the eigenvalues of a matrix correspond to certain
piecewise linear functions of the weights. The analogue of the top
eigenvalue $\lambda_1$, for example, will be the maximum of the
weights of oriented paths extending from the left to the right end of
the network (where the weight of a path is the sum of weights of the edges
contained in the path). For the weighted network in Figure
\ref{intro:net}, we obtain $\lambda_1= 1+1+2+0=4$.  The definition
of the other $\lambda$'s is given in Section \ref{sec:mi}; by analogy, we will
call these quantities the {\em eigenvalues} of the weighted network.

The addition of matrices is replaced by concatenation of planar
networks. In Figure \ref{intro:T12}, we give an example of a pair of
networks $\Gamma$ and $\Delta$ concatenated to form the network
$\Gamma\circ\Delta$. Providing $\Gamma$ and $\Delta$ with weights will
then allow us to fill in the boundary values of the tableau in Figure
\ref{intro:tri}: the value of $\nu_1$, for example, will be the
maximum of the weights of paths extending from left to right, while
$\lambda_1$ will be the maximum taken over paths extending from left
to the middle line $L$.

\begin{figure}[h]
\begin{tikzpicture}
\tikzset{->-/.style={decoration={markings, mark=at position .5 with {\arrow[scale=1]{>}}},postaction={decorate}}}
\draw[->-] (-0.2,0) -- (2,0);
\draw[->-] (2,0) -- (2.7,0);
\draw[->-] (2.7,0) -- (5.6,0);
\draw[->-] (1.5,1) -- (2,0);
\draw[->-] (-0.2,1) -- (1,1);
\draw[->-] (1,1) -- (1.5,1);
\draw[->-] (1.5,1) -- (2,1);
\draw[->-] (2,1) -- (2.7,1);
\draw[->-] (2.7,1) -- (5.6,1);
\draw[->-] (0.5,2) -- (1,1);
\draw[->-] (1.5,2) -- (2,1);
\draw[->-] (-0.2,2) -- (0.5,2);
\draw[->-] (0.5,2) -- (1.5,2);
\draw[->-] (1.5,2) -- (2.7,2);
\draw[->-] (2.7,2) -- (5.6,2);
\draw[ultra thick,red] (-0.2,1.02) -- (1.48,1.02) -- (1.98,0.02) -- (2.7,0.02);
\draw[ultra thick,red] (-0.2,2.02) -- (2.7,2.02);
\draw[ultra thick, red] (2.7,2.02) -- (5.6,2.02);
\fill[black] (-0.2,0) circle (1.5pt);
\fill[black] (-0.2,1) circle (1.5pt);
\fill[black] (-0.2,2) circle (1.5pt);
\fill[black] (2.7,0) circle (1.5pt);
\fill[black] (2.7,1) circle (1.5pt);
\fill[black] (2.7,2) circle (1.5pt);
\fill[black] (5.6,0) circle (1.5pt);
\fill[black] (5.6,1) circle (1.5pt);
\fill[black] (5.6,2) circle (1.5pt);
\fill[black] (2,0) circle (1.5pt);
\fill[black] (1,1) circle (1.5pt);
\fill[black] (1.5,1) circle (1.5pt);
\fill[black] (2,1) circle (1.5pt);
\fill[black] (0.5,2) circle (1.5pt);
\fill[black] (1.5,2) circle (1.5pt);
\draw[opacity=0.3] (2.7,-0.3)--(2.7,2.3) node[above] {$L$};
\draw(1.3,2.5) node{$\Gamma$};
\draw(4.2,2.5) node{$\Delta$};
\end{tikzpicture}
 \caption{Concatenation of networks $\Gamma$ and $\Delta$. A path is shown in thick red.}\label{intro:T12}
\end{figure}
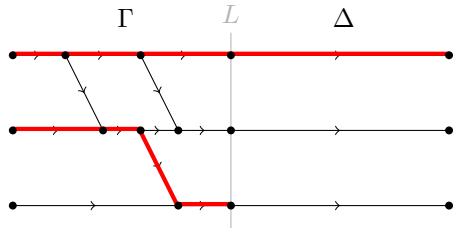

Now we can formulate our first correspondence result, Theorem
\ref{thm:Horn1}: for any pair of weighted planar networks $\Gamma$ and
$\Delta$, the eigenvalues of $\Gamma$, $\Delta$, and
$\Gamma\circ\Delta$ satisfy the hive condition and the trace condition \eqref{traces}.

A key feature of our construction is that there is a natural  explicit
definition of the values of the function $f$ at the internal nodes of
$T_n$, and this makes the proofs rather straightforward.  For
example, the value $x$ of
$f$ assigned to the middle node in Figure \ref{intro:tri}, is the maximal sum of the weights of two {\bf disjoint} oriented
paths in $\Gamma\circ\Delta$, one of which extends from left to right,
while the other one from left to the middle line $L$.  An example of such
a pair is shown in Figure \ref{intro:T12}.  Note that  inequality
\eqref{intro:e2} is now obvious: on the right-hand side of (2), the maximum is taken over all pairs of paths, one going from left to right and the other one from left to the middle, while on the left-hand side of (2), the maximum is taken only over a subset of such pairs, namely, over all pair of disjoint pathes.

A natural question is to compare the set defined by these eigenvalue analogs with the set of eigenvalues of triples of Hermitian matrices.  Theorem
\ref{thm:Horn2} states that any triple of ordered $n$-tuples satisfying
the hive condition is the triple of the sets of eigenvalues of $\Gamma$,
$\Delta$, and $\Gamma\circ\Delta$ for some  weighted
networks $\Gamma$ and $\Delta$. For $n=3$, the
graphs underlying $\Gamma$ and $\Delta$ can be chosen as shown in Figure \ref{intro:T12}.

We begin our paper with a planar network interpretation of the
precursor of the Horn problem: the interlacing inequalities for
eigenvalues of a Hermitian matrix and its principal submatrices (see
\cite{HJ}).  These latter inequalities play a prominent role in the description of  the
Gelfand--Zeitlin integrable system (see \cite{GuillStern}).  Then we
proceed to prove our main results, Theorems \ref{thm:Horn1} and
\ref{thm:Horn2}.

One can regard the eigenvalue problem for planar networks as a rather
nontrivial tropicalization of the eigenvalue problem for Hermitian
matrices. From this perspective, it is natural
to expect  but not very easy to prove that both problems are governed
by the same set of inequalities.  In the forthcoming paper \cite{APS},
we will prove this ``detropicalization'' correspondence principle for
the Horn problem, thus providing a new proof of the theorem of Knutson
and Tao.

\vskip 0.2cm

{\bf Acknowledgements.} We are grateful to S. Fomin, A. Knutson, E. Meinrenken,  M. Vergne, J. Weitsmann, C. Woodward, and A. Zelevinsky for inspiring discussions and comments.

\section{Gelfand--Zeitlin and Horn problems}\label{sec:first}

 In this section, we recall two classical problems of linear
 algebra: the Gelfand--Zeitlin and Horn problems.

\subsection{The Gelfand--Zeitlin problem}

Let $\H_n$ be the set of $n$-by-$n$ Hermitian matrices. For $A \in
\H_n$ and $1 \leq k \leq n$, denote by $A^{(k)}$ the principal
submatrix of $A$ of size $k$, i.e., the $k$-by-$k$ submatrix sitting
in the upper left corner of $A$.  Let $(\lambda^{(k)}_1 \geq \dots
\geq \lambda^{(k)}_k)$ be the sequence of ordered eigenvalues of
$A^{(k)}$; this way we obtain a set of functions
\begin{equation}
  \label{minorev}
  \lambda_i^{(k)}: \H_n \to \R, \quad 0< i \leq k \leq n.
\end{equation}

It is a classical  result of linear algebra (see, e.g., \cite{HJ}) that these
functions satisfy the following
{\em interlacing inequalities}:
\begin{equation}
  \label{eq:GZ}
\lambda^{(k+1)}_i \geq \lambda^{(k)}_i \geq \lambda^{(k+1)}_{i+1},\;
0 < i \leq k < n.
\end{equation}
The converse is also true: any set of numbers satisfying the interlacing inequalities appears as the eigenvalues of a Hermitian matrix and its principal submatrices.

One can recast inequalities \eqref{eq:GZ} in the following form.
Let $T=T_n$ be a regular triangulation of the equilateral triangle
(Figure \ref{tableau}) with the set of nodes $VT$ parametrized by the indices
$0\leq i\leq k\leq n$; thus we have $|VT|=(n+1)(n+2)/2$.  To each node of the triangulation, we associate
a coordinate variable
$t^k_i:\R^{VT}\to\R$.

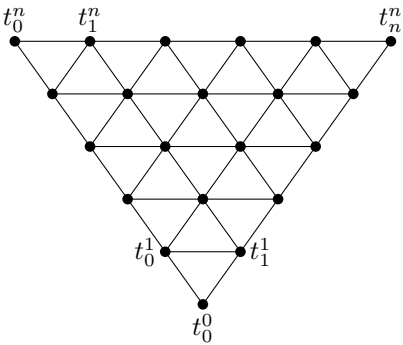
\begin{figure}[h]
\begin{tikzpicture}
\fill[black] (0,6) circle (2pt) node[above]{$t_0^n$};
\fill[black] (1,6) circle (2pt) node[above]{$t_1^n$};
\fill[black] (2,6) circle (2pt);
\fill[black] (3,6) circle (2pt);
\fill[black] (4,6) circle (2pt);
\fill[black] (5,6) circle (2pt) node[above]{$t_n^n$};
\fill[black] (0.5,5.3) circle (2pt);
\fill[black] (1.5,5.3) circle (2pt);
\fill[black] (2.5,5.3) circle (2pt);
\fill[black] (3.5,5.3) circle (2pt);
\fill[black] (4.5,5.3) circle (2pt);
\fill[black] (1,4.6) circle (2pt);
\fill[black] (2,4.6) circle (2pt);
\fill[black] (3,4.6) circle (2pt);
\fill[black] (4,4.6) circle (2pt);
\fill[black] (1.5,3.9) circle (2pt);
\fill[black] (2.5,3.9) circle (2pt);
\fill[black] (3.5,3.9) circle (2pt);
\fill[black] (2,3.2) circle (2pt) node[left]{$t_0^1$};
\fill[black] (3,3.2) circle (2pt) node[right]{$t_1^1$};
\fill[black] (2.5,2.5) circle (2pt) node[below]{$t_0^0$};
\draw (0,6) -- (5,6);
\draw (0.5,5.3)  -- (4.5,5.3);
\draw (1,4.6) -- (4,4.6);
\draw (1.5,3.9) -- (3.5,3.9);
\draw (2,3.2) -- (3,3.2);
\draw (0,6) -- (2.5,2.5) -- (5,6);
\draw (1,6) -- (3,3.2);
\draw (2,6) -- (3.5,3.9);
\draw (3,6) -- (4,4.6);
\draw (4,6) -- (4.5,5.3);
\draw (4,6) -- (2,3.2);
\draw (3,6) -- (1.5,3.9);
\draw (2,6) -- (1,4.6);
\draw (1,6) -- (0.5,5.3);
\end{tikzpicture}
\caption{Triangulation $T_n$.}\label{tableau}
\end{figure}

Denote by $\ehor$ the set of horizontal edges of the triangulation $T$ 
parameterized by pairs $(i,k)$ satisfying $0< i \leq k \leq n$; again,
to each horizontal edge, we associate coordinate functions on
$\R^{\ehor}$, $h_i^{(k)}:\R^{\ehor}\to\R$, $0< i \leq k \leq
n$. Note that this index set coincides with the index set of  the eigenvalues
of the principal submatrices of an Hermitian matrix \eqref{minorev}.
\begin{defn}  \label{defc2}
The cone $\C_2\subset\R^{VT}$ is the polyhedral cone defined by the system of
 inequalities
\begin{equation}  \label{eq:interlacing}
\begin{array}{lll}
t^{k+1}_{i} + t^{k}_{i-1} & \geq & t^{k+1}_{i-1} + t^{k}_{i},  \\
t^{k+1}_{i} + t^{k}_{i} & \geq & t^{k+1}_{i+1} + t^{k}_{i-1}
\end{array}
\end{equation}
for $0 < i \leq k < n$. 

The horizontal boundary map is the map 
\[    \bar\partial:\R^{VT}\to\R^{\ehor}:\; \{t_i^k\}_{0< i \leq k \leq n}\mapsto \{h_i^{(k)}=t^k_i-t^k_{i-1}\}_{0< i \leq k \leq n}.
\]
\end{defn}

Note that   inequalities \eqref{eq:interlacing} are parametrized by
the internal non-horizontal edges of the triangulation $T$, or,
alternatively, by rhombi of two types having these edges as   short
diagonals (see Figure \ref{rhombi12}): for each rhombus, the corresponding
inequality states that the sum of the two numbers assigned to the
endpoints of the short diagonal is greater than or equal to the sum of the
two numbers assigned to the endpoints of the long diagonal.

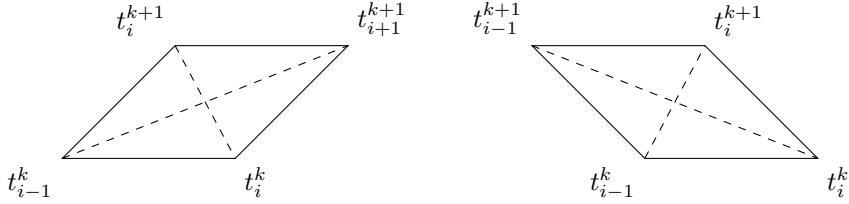
\begin{figure}[h]
\begin{minipage}{0.48\linewidth}
\begin{tikzpicture}
\draw (0,0) node[below left]{$t_{i-1}^k$} -- (1.5,1.5) node[above left]{$t_{i}^{k+1}$} -- (3.8,1.5) node[above right]{$t_{i+1}^{k+1}$} -- (2.3,0) node[below right]{$t_{i}^k$} -- cycle;
\draw [dashed] (0,0) -- (3.8,1.5);
\draw [dashed] (1.5,1.5) -- (2.3,0);
\end{tikzpicture}
\end{minipage}
\begin{minipage}{0.48\linewidth}
\begin{tikzpicture}
\draw (1.5,0) node[below left]{$t_{i-1}^k$} -- (0,1.5) node[above left]{$t_{i-1}^{k+1}$} -- (2.3,1.5)  node[above right]{$t_{i}^{k+1}$} -- (3.8,0) node[below right]{$t_{i}^k$} -- cycle;
\draw [dashed] (1.5,0) -- (2.3,1.5);
\draw [dashed] (0,1.5) -- (3.8,0);
\end{tikzpicture}
\end{minipage}
\caption{The two types of rhombi.}\label{rhombi12}
\end{figure}

Observe that
\begin{itemize}
\item the set $\C_2$ and the linear map $\bar\partial$ are invariant under the
  transformations $t^k_i\mapsto t^k_i+c_k$ for
  $(c_k,\dots,c_0)\in\R^{k+1}$,  
\item the linear map $\bar\partial$ establishes a linear isomorphism
  between  $\{t_0^k=0|\;k=0,\dots,n\}\subset\R^{VT}$ and $\R^{\ehor}$.
\end{itemize}

Then we have the following characterization of the interlacing inequalities.
\begin{prop}\label{thm:GZ}
  Let $\C_{GZ}\subset\R^{\ehor}$ be the cone defined by the interlacing inequalities \eqref{eq:GZ}.
Then
\[   \C_{GZ} =\bar\partial(\C_2).
\]
In fact, the linear map $\bar\partial$ establishes an isomorphism of
  polyhedral cones:
\begin{equation}
  \label{imc2}
   \bar\partial:\C_2\cap\{t_0^k=0|\;k=0,\dots,n\} \to \C_{GZ}.
\end{equation}
\end{prop}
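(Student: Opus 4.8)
The plan is to show that the rhombus inequalities \eqref{eq:interlacing} cutting out $\C_2$ and the interlacing inequalities \eqref{eq:GZ} cutting out $\C_{GZ}$ correspond to each other termwise under $\bar\partial$, and then to deduce the two cone statements from the linear-algebra observations recorded just before the proposition. Concretely: since $h_i^{(k)}=t_i^k-t_{i-1}^k$, the first inequality in \eqref{eq:interlacing}, $t^{k+1}_{i}+t^{k}_{i-1}\geq t^{k+1}_{i-1}+t^{k}_{i}$, rearranges to $t_i^{k+1}-t_{i-1}^{k+1}\geq t_i^k-t_{i-1}^k$, i.e. to $h_i^{(k+1)}\geq h_i^{(k)}$, while the second, $t^{k+1}_{i}+t^{k}_{i}\geq t^{k+1}_{i+1}+t^{k}_{i-1}$, rearranges to $t_i^k-t_{i-1}^k\geq t_{i+1}^{k+1}-t_i^{k+1}$, i.e. to $h_i^{(k)}\geq h_{i+1}^{(k+1)}$. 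Since both systems are indexed over the same set $0<i\leq k<n$, it follows that, with $h_i^{(k)}$ playing the role of $\lambda_i^{(k)}$, the system \eqref{eq:interlacing} is pulled back from \eqref{eq:GZ} along $\bar\partial$; that is, $\C_2=\bar\partial^{-1}(\C_{GZ})$ inside $\R^{VT}$.

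Next I would invoke the two recorded facts. First, $\bar\partial$ restricts to a linear isomorphism $W\xrightarrow{\sim}\R^{\ehor}$, where $W=\{t_0^k=0\mid k=0,\dots,n\}$; in particular $\bar\partial$ is surjective, so from $\C_2=\bar\partial^{-1}(\C_{GZ})$ one gets $\bar\partial(\C_2)=\C_{GZ}$ immediately. For the sharper statement, observe that $\bar\partial|_W$ is a linear isomorphism of ambient spaces which, by the computation above, carries the inequalities defining $\C_2\cap W$ exactly onto the inequalities \eqref{eq:GZ} defining $\C_{GZ}$; hence it restricts to an isomorphism of polyhedral cones $\C_2\cap W\xrightarrow{\sim}\C_{GZ}$, the inverse being the partial-sum map $\lambda_i^{(k)}\mapsto t_i^k=\sum_{j=1}^i\lambda_j^{(k)}$. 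Alternatively one may use the second recorded fact, that $\C_2$ and $\bar\partial$ are invariant under the translations $t_i^k\mapsto t_i^k+c_k$: choosing $c_k=-t_0^k$ moves any point of $\C_2$ into $W$ without changing its $\bar\partial$-image, so $\bar\partial(\C_2)=\bar\partial(\C_2\cap W)=\C_{GZ}$.

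I do not expect a real obstacle here: the substance of the proposition is the bookkeeping identification of the two inequality systems, after which the cone statements are formal. The only points that deserve a moment's care are the boundary index $i=1$ in \eqref{eq:interlacing}, where $t_0^k$ and $t_0^{k+1}$ appear and one must check that restricting to $W$ reproduces the correct relation on the $h$'s, and the (standard) reading of ``isomorphism of polyhedral cones'' as a linear isomorphism of the ambient vector spaces restricting to a bijection of the two cones — which is precisely what $\bar\partial|_W$ provides.
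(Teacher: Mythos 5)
Your proof is correct and follows essentially the same route as the paper's: the heart of both arguments is the observation that the rhombus inequalities \eqref{eq:interlacing}, rewritten as differences, are exactly the interlacing inequalities \eqref{eq:GZ} in the variables $h_i^{(k)}=t_i^k-t_{i-1}^k$, with the partial-sum map $t_i^k=\sum_{j\le i}h_j^{(k)}$ providing the inverse on $\{t_0^k=0\}$. Your framing via $\C_2=\bar\partial^{-1}(\C_{GZ})$ together with surjectivity of $\bar\partial$ is a mildly tidier packaging of the same two-inclusion check the paper performs.
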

\begin{proof}
Let $t=\{t_i^k\}_{i,k}$ be in $\C_2$. Then we can rewrite inequalities \eqref{eq:interlacing} as
\[
\begin{array}{lll}
t^{k+1}_{i} -  t^{k+1}_{i-1} & \geq & t^{k}_{i} - t^{k}_{i-1},  \\
t^{k}_{i} - t^{k}_{i-1} & \geq & t^{k+1}_{i+1} - t^{k+1}_{i}.
\end{array}
\]
Therefore, the image $h=\{h_i^k\}_{i,k}$ of $t$ under the map $\bar\partial$ satisfies the condition
\[h^{k+1}_i\geq h^k_i\geq h^{k+1}_{i+1},\]
i.e., the interlacing inequalities.

Conversely, given a point $h=\{h_i^k\}_{i,k}$ in $\C_{GZ}$, define $t_i^k=h_1^k+\dots+h_i^k$  for all $i$ and $k$. Then $h=\bar\partial t$, and the interlacing inequalities for $h_i^k$ imply precisely the inequalities \eqref{eq:interlacing} for $t_i^k$.
\end{proof}

\subsection{The Horn problem}


The Horn problem is related to the eigenvalues of triples of Hermitian
matrices that add up to zero.  More formally, consider the eigenvalue
map
\[  \Lambda^{\times3}: \H_n\times \H_n\times
\H_n\to  \R^{3n} 
\]
listing the eigenvalues of a triple of Hermitian
matrices of rank $n$, where the eigenvalues of each matrix are listed in decreasing order.  The {\em Horn cone} is defined as the image
\[ \CH = \Lambda^{\times3}
\left(\{(A,B,C)\in\H_n^{\times3};\,A+B=C\}\right)\subset\R^{3n}.
\]
Note that $A+B=C$ implies ${\rm Tr}(A)+{\rm Tr}(B) = {\rm Tr}(C)$
since the trace is a linear functional. For the Horn cone, this means
that if $$(\lambda_1,\dots,\lambda_n,\mu_1,\dots,\mu_n,\nu_1,\dots,\nu_n)\in\CH,$$
then
\begin{equation} \label{eq:Tr=0}
\sum_{i=1}^n \lambda_i + \sum_{i=1}^n \mu_i = \sum_{i=1}^n \nu_i.
\end{equation}

In \cite{Horn}, Horn gave a rather complicated set of inequalities
 that would conjecturally define $\CH$; in particular, he suggested
that $\CH$ is a polyhedral cone inside the hyperplane given by
Equation \eqref{eq:Tr=0}. His conjecture was proved more than 30 years
later \cite{Klyachko, KT}. In the present paper, we will use the following
elegant description of $\CH$  due to Knutson and Tao \cite{KT}.

Consider again the triangulation $T=T_n$ and define the subcone
$\C_3\subset\C_2$ cut out from $\C_2$ by the inequalities corresponding to
the third set of rhombi (see Figure \ref{rhombi3}).

\begin{figure}[h]
\begin{tikzpicture}
\draw (0,0) node[below]{$t_{i-1}^{k-1}$} -- (-0.8,1.1) node[ left]{$t_{i-1}^{k}$} -- (0,2.2) node[above]{$t_{i}^{k+1}$} -- (0.8,1.1) node[right]{$t_{i}^k$} -- cycle;
\draw [dashed] (0,0) -- (0,2.2);
\draw [dashed] (-0.8,1.1) -- (0.8,1.1);
\end{tikzpicture}
\caption{Third type of rhombi.}\label{rhombi3}
\end{figure}
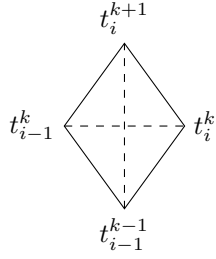

More precisely, we
define the polyhedral cone $\C_3\subset\R^{VT}$ by the following set
of inequalities:
\begin{equation}  \label{eq:KT}
\begin{array}{lll}
t^{k+1}_{i} + t^{k}_{i-1} & \geq & t^{k+1}_{i-1} + t^{k}_{i},  \\
t^{k+1}_{i} + t^{k}_{i} & \geq & t^{k+1}_{i+1} + t^{k}_{i-1},\\
t^{k}_{i} + t^{k}_{i-1} & \geq & t^{k+1}_{i} + t^{k-1}_{i-1}
\end{array}
\end{equation}
for $0 < i \leq k < n$.

\begin{rem} \label{trans3}
We note that $\C_3$ is invariant under the translations $t^k_i\to
  t^k_i+c$  for $c\in\R$.
\end{rem}

Now we define the map $\partial: \R^{VT} \to \R^{3n}$ by the formula
\[ \partial:\ \{t_i^k,\ 0\leq i\leq k\leq n\}\mapsto \{\lambda_i=t^i_0-t^{i-1}_0,\ \mu_i=t^n_i-t^n_{i-1},\
       \nu_i = t^i_i-t^{i-1}_{i-1},\ 1\leq i\leq n\}.
\]
In fact,  $\partial$ is simply the cohomological boundary operator restricted to the outer edges of the triangulation $T_n$.
Then Horn's conjecture may be formulated as follows.
\begin{thm}[Knutson--Tao] \label{thm:KT}
 $\partial(\C_3)=\CH$.
\end{thm}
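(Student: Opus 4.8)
The plan is to reduce the Knutson--Tao theorem to a statement entirely internal to the hive model, using the dictionary provided by Proposition~\ref{thm:GZ} as a warm-up and then leveraging the authors' planar-network machinery. Since the excerpt defers the actual proof to the forthcoming paper \cite{APS}, one natural line is to treat Theorem~\ref{thm:KT} as a black box and instead verify that it is \emph{equivalent} to the combinatorial statements (Theorems~\ref{thm:Horn1} and \ref{thm:Horn2}) that this paper does prove. Concretely, I would first show the inclusion $\partial(\C_3)\subseteq\CH$: given $t\in\C_3$, one reads off $\lambda,\mu,\nu$ via $\partial$, and one must produce Hermitian matrices $A,B$ with $A+B=C$ realizing these eigenvalues. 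Here the classical realization results are the key inputs --- the interlacing part (the converse to \eqref{eq:GZ}, stated after it) realizes a flag of nested Hermitian matrices with prescribed eigenvalue pattern along any maximal chain of principal submatrices, and the third family of rhombus inequalities \eqref{eq:KT} is exactly the extra condition making two such flags (one for $A$, one built from $C$'s restriction pattern) compatible in a way that forces $B=C-A\geq$-consistency. The hard part of this direction is turning the purely combinatorial concavity encoded by \eqref{eq:KT} into an actual matrix $A$; one honest route is induction on $n$, peeling off the top row of the hive, applying the interlacing realization to pass from size $n-1$ to size $n$, and checking the rhombus-of-the-third-kind inequalities are precisely what the induction needs.

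For the reverse inclusion $\CH\subseteq\partial(\C_3)$, I would start from a triple $(A,B,C)$ with $C=A+B$ and build the hive directly: set $t^k_i$ to be an appropriate ``sum of eigenvalues'' quantity --- for instance, combining the partial traces $\mathrm{Tr}(A^{(k)})$-type data along the left edge, the eigenvalues of $C$ along the right edge, and for interior nodes the value obtained from a saturated sum of the form $\sum_{j\le i}\lambda_j(\text{some }k\times k\text{ compression})$. The content is that these numbers satisfy \eqref{eq:KT}: the first two families are interlacing for the principal-submatrix chain of a single Hermitian matrix, handled exactly as in Proposition~\ref{thm:GZ}, while the third family is the genuinely new Horn-type content. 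Rather than prove that from scratch, I would invoke the planar-network side: Theorem~\ref{thm:Horn1} says network eigenvalues satisfy the hive condition, and I would use a ``detropicalization''/limiting argument (the principle flagged in the introduction as the subject of \cite{APS}) to transfer the hive inequalities from networks to matrices, or, alternatively, cite the Hermitian-matrix hive inequality as the established Knutson--Tao result and simply record that our $\partial$ and $\C_3$ match their normalization.

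In slightly more detail, the bookkeeping steps I would carry out, in order: (1) fix the normalization $t^0_0=0$ using Remark~\ref{trans3}, so that $\C_3$ is genuinely a cone transverse to the translation line and $\partial$ is injective on it up to that translation; (2) identify the left edge values with partial sums $\lambda_1+\dots+\lambda_i$, the bottom-right edge with $\nu_1+\dots+\nu_i$ read from the apex, and the top edge with $\sum\lambda+\sum_{j\le i}\mu_j$, reconciling this with the $\partial$ formula $\lambda_i=t^i_0-t^{i-1}_0$, $\mu_i=t^n_i-t^n_{i-1}$, $\nu_i=t^i_i-t^{i-1}_{i-1}$; (3) check the trace condition \eqref{eq:Tr=0} falls out of telescoping these edge sums around the triangle; (4) prove $\partial(\C_3)\subseteq\CH$ by the inductive matrix-construction sketched above; (5) prove $\CH\subseteq\partial(\C_3)$ by the explicit eigenvalue-sum assignment plus verification of \eqref{eq:KT}. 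The main obstacle is unquestionably step~(4) for the first inclusion and the third family of inequalities in step~(5) for the second: these are the two places where Horn-type positivity --- as opposed to soft interlacing --- genuinely enters, and they are precisely the content that Klyachko and Knutson--Tao's original arguments (Schubert calculus, honeycombs/puzzles) were designed to supply. A self-contained proof would therefore either reproduce one of those arguments or, in the spirit of this paper, route the hard inequality through the planar-network model and the promised detropicalization theorem of \cite{APS}.
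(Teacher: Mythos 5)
This theorem is labeled ``Knutson--Tao'' and is \emph{cited}, not proved, in the paper: the introduction states that Horn's conjecture was proved by Klyachko and by Knutson--Tao, and that a new proof via detropicalization is deferred to the companion paper \cite{APS}. So there is no internal argument to compare against; your instinct to treat the result as a black box and to test its consistency with Theorems~\ref{thm:Horn1} and \ref{thm:Horn2} matches the paper's stance. In that respect the proposal is diagnostically correct.

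However, the inductive sketch you offer for the inclusion $\partial(\C_3)\subseteq\CH$ would fail as stated, and it is worth being precise about why. You propose peeling off the top row of the hive and ``applying the interlacing realization to pass from size $n-1$ to size $n$, and checking the rhombus-of-the-third-kind inequalities are precisely what the induction needs.'' But interlacing (the converse of \eqref{eq:GZ}) only controls the principal-submatrix chain of a \emph{single} Hermitian matrix; it lets you build one flag with prescribed nested spectra. It says nothing about when two such flags --- one for $A$, one for $C$ --- can be simultaneously chosen so that $B=C-A$ has a prescribed spectrum, and that compatibility is precisely the content of the Horn problem. Moreover, removing the top row of the hive discards the $\mu$ data on the top edge, so the remaining triangle is not itself a Horn hive of rank $n-1$: the induction does not even set up cleanly. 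The third rhombus family \eqref{eq:KT} is a necessary concavity condition, but it is far from sufficient to make an elementary induction close; if it were, Horn could have settled existence in 1962 rather than only conjecturing the inequalities. All known proofs (Klyachko via vector-bundle stability, Knutson--Tao via honeycombs and saturation, Kapovich--Leeb--Millson via buildings) are substantially deeper. Your closing caveat --- that a self-contained argument must reproduce one of those proofs or route through the detropicalization of \cite{APS} --- is the honest bottom line, but the inductive sketch preceding it should not be read as a viable shortcut. The reverse inclusion $\CH\subseteq\partial(\C_3)$ is likewise underspecified: the interior hive entries are not pinned down, and verifying the third rhombus inequality for any concrete candidate is exactly the nontrivial step left unaddressed.
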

\begin{rem}
  Since the operator $\partial$ is invariant under the translation
  mentioned in Remark \ref{trans3}, we can normalize $t^0_0=0$, and then the
  theorem may be equivalently stated as follows:
  \begin{equation}
    \label{thm:horn0}
    \partial(\C_3\cap\{t^0_0=0\})=\CH.
  \end{equation}
\end{rem}

\section{Planar networks}\label{sec:mi}

In this section, we introduce the notion of planar networks, the key tool for the rest of the paper.
There are a number of possible definitions of planar networks. We chose the one
that was  the most convenient for our purposes, but the proofs could
be adapted to the other definitions as well.

\begin{defn}
  A  planar network is the following data:
  \begin{itemize}
  \item a finite  graph $\Gamma$ with vertex set $V\Gamma$ and edge
    set $E\Gamma$,
  \item a pair of reals, $a<b$,
  \item an embedding of $\Gamma$ into the strip $\{a\leq x \leq
    b\}\subset\R^2$ such that the image of each edge is a segment of a
    straight line, which is not parallel to the $y$-axis.
  \end{itemize}
  \end{defn}
We will call the vertices on the line $\{x=a\}$
  {\em sources} and the vertices on the line $\{x=b\}$
 {\em sinks} of $\Gamma$; the other vertices will be called {\em internal}.

Observations:
\begin{itemize}
\item A {\em subnetwork} of a planar network $\Gamma$ is naturally
  defined as a subgraph of $\Gamma$, with the rest of the data unchanged. The sources and sinks of the
  subnetwork thus have to be subsets of the sources and sinks of
  $\Gamma$.
\item A planar network $\Gamma$ is naturally oriented (from left to
  right), and we will use this orientation in what follows.
\item Using this orientation, one can characterize the vertices of
  $\Gamma$ by the number of incoming and outgoing edges. A source,
  for example, is always a vertex of degree $(0,d)$ for some nonnegative integer
  $d$.
\end{itemize}

A crucial role in our analysis will be played by paths.
\begin{defn} A  \em multipath  in $\Gamma$ is a subnetwork  whose
  every vertex that is an internal vertex of $\Gamma$ is of degree
  $(1,1)$.
\end{defn}
A multipath has the same number of sources and sinks; a multipath with k sources is called a {\em $k$-path}. Each $k$-path is simply the union of $k$
disjoint paths of $\Gamma$ connecting a source with a sink. The set of
$k$-paths in $\Gamma$ will be denoted by $P_k\Gamma$.

\begin{defn} Let $\T=\R\cup\{-\infty\}$ be the semifield of tropical numbers.
  A {\em weighting} of a planar network $\Gamma$ is an assignment
  of a tropical number to each  edge of $\Gamma$.
Identifying the set of weightings of $\Gamma$ with  $\T^{E\Gamma}$, we
can introduce
\begin{itemize}
\item  the coordinate function $w_e:\T^{E\Gamma}\to \T$  for each edge
  $e\in E\Gamma$, and
\item the weight functional
\[   w_\alpha:\T^{E\Gamma}\to\T,\quad w_\alpha = \sum_{e\in E\alpha}w_e 
\]
for each subgraph $\alpha$ of $\Gamma$, in particular, for each
  multipath in $\Gamma$. When $\alpha$ has no edges, we set $w_{\alpha}=0$.
\end{itemize}
\end{defn}

\subsection{The maximum functionals} A certain collection of
piecewise linear functions on the space $\T^{E\Gamma}$ of weightings of
a planar network $\Gamma$ will play an important role in what follows.
For a planar network $\Gamma$, $i>0$,  and a weighting $\epsilon\in\T^{E\Gamma}$, we define
$l_i\Gamma:\T^{E\Gamma}\to\T$ by
\begin{equation} \label{eq:lGamma}
l_i\Gamma(\epsilon) = {\rm max}\{ w_\alpha(\epsilon)|\;\alpha \in P_i\Gamma\} 
\end{equation}
if the set $P_i\Gamma$ is nonempty; otherwise, we set
$l_i\Gamma=-\infty$.
By definition, we put $l_0\Gamma=0$ and
 denote by $l\Gamma$ the $(n+1)$-tuple $(l_0\Gamma,\dots,\l_n\Gamma)$.
\begin{rem}
If, for a weighting $\epsilon$, we have $l_i\Gamma(\epsilon)\in\R$  for all $i$, then we can define the eigenvalues associated to $\epsilon$ by the formula
$$\lambda_i=l_i\Gamma(\epsilon)-l_{i-1}\Gamma(\epsilon),$$
so as $l_i\Gamma(\epsilon)=\lambda_1+\dots+\lambda_i$.
\end{rem}

\begin{example}
The simplest example is a planar network $\Gamma$ that contains
no edges (Figure \ref{net:empty}). Then $\T^{E(\Gamma)}$ is a single point, and the image of
$l\Gamma$ is the point $(0,-\infty, \dots, -\infty)$.
\end{example}

\begin{example}
The next example is a planar network with exactly $n$
edges $e_i$ connecting the vertices $(a,i)$ and $(b,i)$  (Figure \ref{net:straight}). Denote the
corresponding weights by $w_i$ and let $(\varpi_1, \varpi_2, \dots, \varpi_n)$
be the permutation of the $n$-tuple $(w_1, w_2, \dots, w_n)$
such that $\varpi_i \geq \varpi_{i+1}$ for all $i=1,  \dots, n-1$.
Then
$$
l_i\Gamma= \varpi_1 + \dots + \varpi_i.
$$
The image of $l\Gamma$ is the closure in $\T^n$ of the polyhedral
cone defined by the inequalities $\varpi_i \geq \varpi_{i+1}$.
\end{example}

\begin{figure}[h]
\begin{minipage}{0.48\linewidth}
\begin{tikzpicture}

\fill[black] (0,0) circle (1.5pt);
\fill[black] (0,0.4) circle (1.5pt);
\fill[black] (0,1) circle (1.5pt);
\fill[black] (0,1.3) circle (1.5pt);
\fill[black] (0,2) circle (1.5pt);
\fill[black] (4,0) circle (1.5pt);
\fill[black] (4,0.6) circle (1.5pt);
\fill[black] (4,1.2) circle (1.5pt);
\fill[black] (4,1.5) circle (1.5pt);
\fill[black] (4,1.9) circle (1.5pt);
\fill[black] (4,2.2) circle (1.5pt);
\draw[->] (-0.6,-0.5) -- (4.5,-0.5) node[right]{$x$};
\draw[->] (-0.6,-0.5) -- (-0.6,2.6) node[above]{$y$};
\path (0,-0.8) node {$a$};
\path (4,-0.8) node {$b$};
\draw[loosely dotted,gray] (0,-0.5) -- (0,2.4);
\draw[loosely dotted,gray] (4,-0.5) -- (4,2.4);
\end{tikzpicture}
 \caption{}\label{net:empty}\end{minipage}
\begin{minipage}{0.48\linewidth}
\begin{tikzpicture}
\draw (0,0) -- (4,0);
\draw (0,0.5) -- (4,0.5);
\draw (0,1) -- (4,1);
\draw (0,1.5) -- (4,1.5);
\draw (0,2) -- (4,2);
\fill[black] (0,0) circle (1.5pt) node[left]{$1$};
\fill[black] (0,0.5) circle (1.5pt) node[left]{$2$};
\fill[black] (0,1) circle (1.5pt);
\fill[black] (0,1.5) circle (1.5pt);
\fill[black] (0,2) circle (1.5pt) node[left]{$n$};
\fill[black] (4,0) circle (1.5pt) node[right]{$1$};
\fill[black] (4,0.5) circle (1.5pt) node[right]{$2$};
\fill[black] (4,1) circle (1.5pt);
\fill[black] (4,1.5) circle (1.5pt);
\fill[black] (4,2) circle (1.5pt) node[right]{$n$};
\draw[->] (-0.6,-0.5) -- (4.5,-0.5) node[right]{$x$};
\draw[->] (-0.6,-0.5) -- (-0.6,2.6) node[above]{$y$};
\path (0,-0.8) node {$a$};
\path (4,-0.8) node {$b$};
\draw[loosely dotted,gray] (0,-0.5) -- (0,2.4);
\draw[loosely dotted,gray] (4,-0.5) -- (4,2.4);
\end{tikzpicture}
 \caption{}\label{net:straight}
\end{minipage}
\end{figure}

\begin{lem}\label{subnet}
  \begin{enumerate}
  \item   If $\Gamma'$ is a subnetwork of $\Gamma$, then $ \im(l\Gamma')
  \subset \im(l\Gamma).  $
\item  If the network $\Gamma'$ can be obtained from $\Gamma$ by insertion of a new vertex splitting an existing edge, then  $ \im(l\Gamma')
  = \im(l\Gamma).  $
  \end{enumerate}
\end{lem}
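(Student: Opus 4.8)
The plan is to establish both parts by exhibiting, for each weighting on one side, a weighting on the other side whose maximum functionals agree, and the key observation is that in both situations the set of multipaths is essentially in bijection.

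\medskip

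\emph{Part (1).} Suppose $\Gamma'$ is a subnetwork of $\Gamma$, with $E\Gamma' \subset E\Gamma$ and $V\Gamma' \subset V\Gamma$. Given a weighting $\epsilon' \in \T^{E\Gamma'}$, I would extend it to a weighting $\epsilon \in \T^{E\Gamma}$ by setting $w_e(\epsilon) = w_e(\epsilon')$ for $e \in E\Gamma'$ and $w_e(\epsilon) = -\infty$ for $e \in E\Gamma \setminus E\Gamma'$. The point is that a multipath in $\Gamma$ using any edge of weight $-\infty$ contributes $-\infty$ to the maximum, so for each $i$ the maximum over $P_i\Gamma$ is achieved (if it is finite at all) on a multipath all of whose edges lie in $\Gamma'$. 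One must check that such a multipath is genuinely an element of $P_i\Gamma'$: its internal vertices (as a subnetwork of $\Gamma'$) have degree $(1,1)$ because they already did in $\Gamma$, and a vertex of $\Gamma'$ that is a source or sink of $\Gamma'$ but an internal vertex of $\Gamma$ — here one has to be slightly careful — in fact the definition of subnetwork forces sources and sinks of $\Gamma'$ to be among those of $\Gamma$, so no such vertex arises. Conversely every $i$-path of $\Gamma'$ is an $i$-path of $\Gamma$ with the same weight. Hence $l_i\Gamma'(\epsilon') = l_i\Gamma(\epsilon)$ for all $i$, giving $\im(l\Gamma') \subset \im(l\Gamma)$.

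\medskip

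\emph{Part (2).} Now suppose $\Gamma'$ is obtained from $\Gamma$ by inserting a new internal vertex $v$ in the interior of an existing edge $e$, splitting $e$ into two edges $e_1, e_2$ with $e_1$ entering $v$ and $e_2$ leaving $v$. One inclusion, $\im(l\Gamma') \supset \im(l\Gamma)$, is handled as in Part (1) once one notes $\Gamma$ is \emph{not} literally a subnetwork of $\Gamma'$, so instead I would directly define, for $\epsilon \in \T^{E\Gamma}$, the weighting $\epsilon'$ with $w_{e_1}(\epsilon') = w_e(\epsilon)$, $w_{e_2}(\epsilon') = 0$, and $w_f(\epsilon') = w_f(\epsilon)$ otherwise; then multipaths in $\Gamma'$ correspond bijectively and weight-preservingly to multipaths in $\Gamma$ (a multipath through $v$ must use both $e_1$ and $e_2$ since $v$ has degree $(1,1)$, matching a multipath through $e$ in $\Gamma$), using $w_{e_1}+w_{e_2}=w_e$. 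Here the new vertex $v$ is internal in $\Gamma'$ and of degree $(1,1)$ in the whole graph, which is compatible with the multipath condition. For the reverse inclusion, given $\epsilon' \in \T^{E\Gamma'}$, define $\epsilon \in \T^{E\Gamma}$ by $w_e(\epsilon) = w_{e_1}(\epsilon') + w_{e_2}(\epsilon')$ (with the tropical convention $r + (-\infty) = -\infty$) and $w_f(\epsilon) = w_f(\epsilon')$ otherwise; the same bijection of multipaths shows the weight functionals agree, so $l_i\Gamma'(\epsilon') = l_i\Gamma(\epsilon)$ for all $i$. Combining the two inclusions gives $\im(l\Gamma') = \im(l\Gamma)$.

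\medskip

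The only genuinely delicate point I anticipate is bookkeeping around the degree-$(1,1)$ condition in the definition of multipath when a vertex changes status (internal versus source/sink) between $\Gamma$ and the modified network; one has to invoke the constraint, built into the definition of subnetwork, that sources and sinks of a subnetwork are subsets of those of the ambient network, so no internal vertex of $\Gamma$ can become a source or sink of $\Gamma'$ in Part (1), and the inserted vertex in Part (2) is internal by construction. Everything else is a routine check that the tropical weight of a multipath is unchanged under the correspondence, together with the standard remark that edges of weight $-\infty$ can be ignored when computing a maximum that is not identically $-\infty$ (and when it is, both sides agree trivially).
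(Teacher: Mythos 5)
Your proposal is correct and takes essentially the same approach as the paper: part (1) via extending weightings by $-\infty$ on discarded edges, and part (2) via the map that sums the two weights on the split edge (the paper defines this map $s$ and observes $l\Gamma\circ s = l\Gamma'$, leaving the surjectivity of $s$ implicit, which you spell out with the $(w_e, 0)$ section). The extra bookkeeping you do around internal-vertex status is exactly the right point to verify, and it goes through because a subnetwork shares the same strip $[a,b]$.
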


\begin{proof}
  1.  Set the weights of all edges $e \in E\Gamma\backslash E\Gamma'$
  equal to $-\infty$. The image of this subset of weightings under $l\Gamma$
  coincides with the image of $l\Gamma'$.\\
  2. Let 
  $s:\T^{E\Gamma'}\to\T^{E\Gamma}$ be the map assigning to the split edge the
  sum of the weights of the two edges obtained by the insertion of
  the new vertex. Then $s$ preserves the functional $l$:
  $l\Gamma(s\epsilon)=l\Gamma'(\epsilon)$. This implies $
  \im(l\Gamma') = \im(l\Gamma)$.
\end{proof}
Our main goal is to study the image of the piecewise linear map
$l\Gamma$  for a planar network $\Gamma$. We will need the following consequence of
Lemma \ref{subnet}.
\begin{cor}  \label{lemcor}
By allowing embedded edges that are unions of intervals,
we can always replace a planar network $\Gamma$ by another planar
network $\Gamma'$ having no vertices of
degree (1,1) so that $\im(l\Gamma)=\im(l\Gamma')$.
\end{cor}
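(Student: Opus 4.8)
The plan is to deduce Corollary~\ref{lemcor} from Lemma~\ref{subnet}(2) by iteratively removing degree-$(1,1)$ vertices, the key point being that deleting such a vertex is the inverse operation of the edge-splitting in the lemma. Suppose $v\in V\Gamma$ is an internal vertex of degree $(1,1)$, with incoming edge $e'$ and outgoing edge $e''$. I would first observe that, since edges are embedded as straight segments not parallel to the $y$-axis, the union $e'\cup e''$ is a connected arc that is monotone in the $x$-direction; allowing embedded edges that are unions of intervals, we may regard $e:=e'\cup e''$ as a single legal edge of a new network $\Gamma'$ obtained from $\Gamma$ by deleting $v$ and replacing $e',e''$ by $e$. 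Then $\Gamma$ is exactly the network obtained from $\Gamma'$ by inserting the vertex $v$ to split $e$, so Lemma~\ref{subnet}(2) gives $\im(l\Gamma)=\im(l\Gamma')$. (Alternatively one can use the map $s$ directly, as in the proof of the lemma.)

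Next I would run this reduction to termination. Each step strictly decreases $|V\Gamma|$ (equivalently $|E\Gamma|$) by one, so after finitely many steps we reach a network $\Gamma'$ with no internal vertex of degree $(1,1)$, and $\im(l\Gamma)=\im(l\Gamma')$ by transitivity. One small point to check is that sources and sinks are never of degree $(1,1)$ in the relevant sense: a source has in-degree $0$ and a sink has out-degree $0$, so neither is touched by the procedure, and the source/sink structure — hence the index set on which $l$ is defined — is preserved. I should also note that the reduction never creates a new vertex, so no new degree-$(1,1)$ vertices appear except possibly by being exposed, and those are handled by the iteration.

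The one genuine subtlety — and the step I expect to need the most care — is the bookkeeping when $e'$ and $e''$ lie on the same straight line, or when concatenating them would cause the arc $e'\cup e''$ to pass through another vertex of $\Gamma$ or to overlap another edge. The phrase ``allowing embedded edges that are unions of intervals'' in the statement is precisely the license to treat $e'\cup e''$ as one edge even when it is not a single straight segment; I would make explicit that we enlarge the class of networks to permit edges embedded as finite unions of segments (still $x$-monotone, so still carrying a well-defined orientation), and remark that Lemma~\ref{subnet} and all the preceding definitions go through verbatim in this slightly larger class, since they only use the combinatorial structure and the weight functionals $w_\alpha$. With that convention in place the argument above is complete, and I would close by recording that the resulting $\Gamma'$ may have parallel or multiple edges but this causes no difficulty for the functionals $l_i$.
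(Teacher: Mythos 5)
Your proposal is correct and is essentially the argument the paper intends: the corollary is stated as a direct consequence of Lemma~\ref{subnet}(2), and iteratively reversing the vertex-insertion operation (merging the two edges at each degree-$(1,1)$ vertex into one piecewise-linear edge) is exactly the reduction, with the ``unions of intervals'' clause in the statement supplying the needed license. The paper leaves the proof implicit, so there is nothing further to compare.
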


\section{Main results}

In this section, we state the main results of the paper.
We establish a correspondence principle between the functionals $l\Gamma$ for
planar networks $\Gamma$ and eigenvalues of Hermitian matrices.
In particular, we show that the Gelfand--Zeitlin cone and the Horn cone
appear as images of natural piecewise linear functions on the space
of weightings of planar networks.

\subsection{Planar networks and Gelfand--Zeitlin}
In this section, we will assume that the planar network $\Gamma$ has
precisely $n$ sources and sinks. Without loss of generality, we can
assume that the set of $y$-coordinates of the sources and sinks is the
set of the first $n$ integers $\{1,2,\dots,n\}$. We will say that such a network
$\Gamma$ is a planar network {\em of rank $n$}.

For a planar network of rank $n$, we denote by $\Gamma^{(k)}$ the
maximal subgraph of $\Gamma$ that does not contain the sinks or
sources with $y$ coordinates above the line $\{y=k\}$; these are the vertices
\[  (a,k+1),(b,k+1),\dots,(a,n),(b,n).
\]
Then $\Gamma^{(k)}$ is a planar network of rank $k$.

The collection of maps $l_i{\Gamma^{(k)}}$, $0\leq i\leq k\leq n$,
defines a map from the set of weightings of $\Gamma$ to the set of
triangular tableaux (Figure \ref{tableau}) filled by tropical numbers:
\[    L\Gamma: \T^{E\Gamma}\to \T^{VT}.
\]
 Note that each row of the tableau gives the map $l\Gamma^{(k)}$ for the appropriate $k$.

\begin{thm} \label{thm:GZ1}
Let $\Gamma$ be a planar network of rank $n$. Then
$$
{\rm im}(L\Gamma) \subset \overline{\C_2}\cap\{t_0^k=0|\;k=0,\dots,n\}.
$$
\end{thm}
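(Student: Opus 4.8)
The plan is to verify the two defining conditions of $\overline{\C_2}\cap\{t_0^k=0\}$ separately: first the vanishing $t_0^k = 0$, which is immediate, and then the two families of rhombus inequalities \eqref{eq:interlacing} (in their closed, i.e.\ non-strict, form), which is the substance of the statement. Fix a weighting $\epsilon\in\T^{E\Gamma}$ and write $t_i^k = l_i\Gamma^{(k)}(\epsilon)$. Since $l_0$ of any network is $0$ by definition, we get $t_0^k=0$ for all $k$, so $\im(L\Gamma)$ lands in the coordinate subspace $\{t_0^k=0\}$. It remains to show that the tableau $\{t_i^k\}$ satisfies
\[
t^{k+1}_{i} + t^{k}_{i-1} \ \geq\ t^{k+1}_{i-1} + t^{k}_{i},
\qquad
t^{k+1}_{i} + t^{k}_{i} \ \geq\ t^{k+1}_{i+1} + t^{k}_{i-1},
\]
for $0<i\le k<n$, as inequalities in $\T$ (with the convention that any inequality involving $-\infty$ on the larger side forces both sides to be $-\infty$; one should check the degenerate cases where some $P_i\Gamma^{(k)}$ is empty do not cause trouble, which they don't since emptiness propagates monotonically in $i$).

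The core of the argument is a \emph{subadditivity/superadditivity} estimate for the functionals $l_i$ relating consecutive ranks $k$ and $k+1$. Concretely, I would prove: for all admissible $i,k$,
\[
l_i\Gamma^{(k+1)} + l_{i-1}\Gamma^{(k)} \ \geq\ l_{i-1}\Gamma^{(k+1)} + l_i\Gamma^{(k)}
\quad\text{and}\quad
l_i\Gamma^{(k+1)} + l_i\Gamma^{(k)} \ \geq\ l_{i+1}\Gamma^{(k+1)} + l_{i-1}\Gamma^{(k)}.
\]
Both inequalities have the same flavour: the left side is a max over a pair of multipaths realizing $(l_i\Gamma^{(k+1)}, l_{i-1}\Gamma^{(k)})$, while the right side asks for a pair realizing $(l_{i+1}\Gamma^{(k+1)}, l_{i-1}\Gamma^{(k)})$ (resp.\ a max over a constrained family). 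The standard tool here is the Lindstr\"om--Gessel--Viennot-style \emph{path-swapping} argument adapted to the tropical (max-plus) setting: take an optimal $(i{+}1)$-path $\alpha$ in $\Gamma^{(k+1)}$ and an optimal $(i{-}1)$-path $\beta$ in $\Gamma^{(k)}$; since $\Gamma^{(k)}\subset\Gamma^{(k+1)}$ and $\Gamma^{(k)}$ omits exactly the source/sink at level $k+1$, the union $\alpha\cup\beta$ overlaps in a controlled way, and by rerouting along shared vertices one produces an $i$-path in $\Gamma^{(k+1)}$ together with an $i$-path in $\Gamma^{(k)}$ whose total weight is at least $w_\alpha+w_\beta$. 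This gives the first displayed inequality; the second is obtained by a symmetric swap producing an $i$-path in $\Gamma^{(k+1)}$ and an $i$-path in $\Gamma^{(k)}$ out of an optimal pair for the right-hand side. Summing/rearranging these two estimates yields exactly \eqref{eq:interlacing}. Passing to the closure $\overline{\C_2}$ is automatic since $L\Gamma$ is continuous and piecewise linear and the inequalities are non-strict.

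The main obstacle I anticipate is the bookkeeping in the path-swapping step: one must argue carefully that when an optimal multipath in $\Gamma^{(k+1)}$ and one in $\Gamma^{(k)}$ are superimposed, planarity forces their disjoint-path decompositions to intersect in a way that permits a weight-nonincreasing exchange landing the correct number of strands in each of the two nested networks, \emph{and} that after the exchange one genuinely has multipaths (no vertex acquires degree other than $(1,1)$ internally, and the endpoints stay among the allowed sources/sinks of $\Gamma^{(k)}$ versus $\Gamma^{(k+1)}$). The subtlety is that $\Gamma^{(k)}$ and $\Gamma^{(k+1)}$ differ only at the single top level, so the exchange has to be arranged so that the extra strand always ends up in $\Gamma^{(k+1)}$ and never needs to use the forbidden top vertices of $\Gamma^{(k)}$; planarity (the non-crossing structure of paths in the strip) is precisely what makes this possible, and formalizing that is where the real work lies. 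Once this combinatorial lemma is in place, Theorem~\ref{thm:GZ1} follows by the two-line rearrangement above.
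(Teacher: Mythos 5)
You have the right high-level strategy and you have correctly isolated the hard part: given optimal multipaths realizing the right-hand side of a rhombus inequality, one needs a weight-preserving rerouting that redistributes the strands so that the right numbers land in each of the nested networks $\Gamma^{(k-1)}\subset\Gamma^{(k)}$. But you explicitly defer that step (``formalizing that is where the real work lies''), and that step \emph{is} the theorem; as written the proposal is not a proof. The device the paper uses, and the one you should supply, is this: model each element of $P_1\Gamma$ as the graph of a continuous function $f\colon[a,b]\to\R$ with $\gr(f)\subset\Gamma$, so that an $i$-path is a strictly ordered $i$-tuple $(f_1<\cdots<f_i)$ of such functions. Given $\blf\in P_{i-1}\Gamma^{(k)}$ and $\blg\in P_i\Gamma^{(k-1)}$, form the combined $(2i-1)$-tuple and apply the \emph{pointwise sorting map} $\dor$; set $\even(\blf,\blg)=(\dor_2,\dor_4,\dots,\dor_{2i-2})$ and $\odd(\blf,\blg)=(\dor_1,\dor_3,\dots,\dor_{2i-1})$. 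Three observations finish the argument: (i)~at each $x$ no value occurs more than twice among the $2i-1$ functions, so $\dor_{j-1}(x)<\dor_{j+1}(x)$, hence $\even$ and $\odd$ are again strictly ordered tuples of graphs lying in $\Gamma$, i.e.\ multipaths; (ii)~sorting leaves the multiset of values unchanged, so $w[\even]+w[\odd]=w[\blf]+w[\blg]$ identically in the weighting; (iii)~at $x\in\{a,b\}$ the $g_j$ take values $\le k-1$ and the $f_j$ are strictly increasing, so at most one of the $2i-1$ endpoint values equals $k$, which forces $\dor_2\le k-1$ there and gives $\even\in P_{i-1}\Gamma^{(k-1)}$ and $\odd\in P_i\Gamma^{(k)}$. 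The second rhombus inequality follows from the same even/odd split applied to $\blf\in P_{i+1}\Gamma^{(k)}$ and $\blg\in P_{i-1}\Gamma^{(k-1)}$, producing elements of $P_i\Gamma^{(k-1)}$ and $P_i\Gamma^{(k)}$. This is the precise form of the ``path-swap'' you invoked; planarity enters exactly through the fact that sorting the graphs of continuous functions again gives graphs of continuous functions, so no delicate local crossing analysis is needed. (A minor point: your last sentence about ``summing/rearranging these two estimates'' is superfluous---the two displayed estimates already \emph{are} the two families of interlacing inequalities after the obvious reindexing.)
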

Here and below the closure is  taken in $\T^N$.

The image of $L\Gamma$ depends on the planar network
$\Gamma$. There are networks, however, for which this image is
maximal. Let $\Gamma_0=\Gamma_0[n]$ be the network in Figure
\ref{net:triang}.

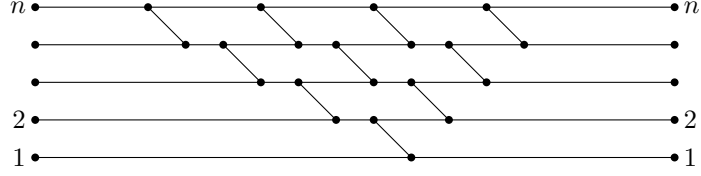
\begin{figure}[h]
\begin{tikzpicture}
\draw (-1,0) -- (7.5,0);
\draw (-1,0.5) -- (7.5,0.5);
\draw (-1,1) -- (7.5,1);
\draw (-1,1.5) -- (7.5,1.5);
\draw (-1,2) -- (7.5,2);
\draw (0.5,2) -- (1,1.5);
\draw (2,2) -- (2.5,1.5);
\draw (3.5,2) -- (4,1.5);
\draw (5,2) -- (5.5,1.5);
\draw (1.5,1.5) -- (2,1);
\draw (3,1.5) -- (3.5,1);
\draw (4.5,1.5) -- (5,1);
\draw (2.5,1) -- (3,0.5);
\draw (4,1) -- (4.5,0.5);
\draw (3.5,0.5) -- (4,0);
\fill[black] (-1,0) circle (1.5pt) node[left]{$1$};
\fill[black] (-1,0.5) circle (1.5pt) node[left]{$2$};
\fill[black] (-1,1) circle (1.5pt);
\fill[black] (-1,1.5) circle (1.5pt);
\fill[black] (-1,2) circle (1.5pt) node[left]{$n$};
\fill[black] (7.5,0) circle (1.5pt) node[right]{$1$};
\fill[black] (7.5,0.5) circle (1.5pt) node[right]{$2$};
\fill[black] (7.5,1) circle (1.5pt);
\fill[black] (7.5,1.5) circle (1.5pt);
\fill[black] (7.5,2) circle (1.5pt) node[right]{$n$};
\fill[black] (0.5,2) circle (1.5pt);
\fill[black] (1,1.5) circle (1.5pt);
\fill[black] (2,2) circle (1.5pt);
\fill[black] (2.5,1.5) circle (1.5pt);
\fill[black] (3.5,2) circle (1.5pt);
\fill[black] (4,1.5) circle (1.5pt);
\fill[black] (5,2) circle (1.5pt);
\fill[black] (5.5,1.5) circle (1.5pt);
\fill[black] (1.5,1.5) circle (1.5pt);
\fill[black] (2,1) circle (1.5pt);
\fill[black] (3,1.5) circle (1.5pt);
\fill[black] (3.5,1) circle (1.5pt);
\fill[black] (4.5,1.5) circle (1.5pt);
\fill[black] (5,1) circle (1.5pt);
\fill[black] (2.5,1) circle (1.5pt);
\fill[black] (3,0.5) circle (1.5pt);
\fill[black] (4,0) circle (1.5pt);
\fill[black] (4,1) circle (1.5pt);
\fill[black] (4.5,0.5) circle (1.5pt);
\fill[black] (3.5,0.5) circle (1.5pt);
\end{tikzpicture}
\caption{The planar network $\Gamma_0$.}\label{net:triang}
\end{figure}

\begin{rem}
Clearly, 
eliminating from a rank-$n$ planar network $\Gamma$ all vertices (with
the adjacent edges) that cannot be reached from a source, we do not change the
image of the functional $L\Gamma$.
Combining this with Corollary \ref{lemcor}, we see that we
can replace the subnetwork $\Gamma_0^{(k)}\subset\Gamma_0$ with the network
$\Gamma_0[k]$ (see Figure \ref{net:sub}).
\end{rem}

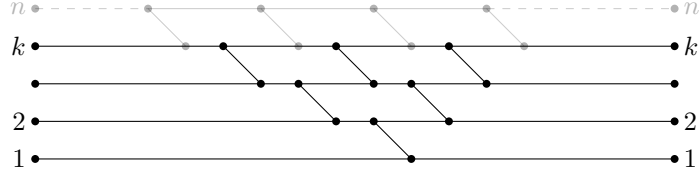
\begin{figure}[h]
\begin{tikzpicture}
\draw (-1,0) -- (7.5,0);
\draw (-1,0.5) -- (7.5,0.5);
\draw (-1,1) -- (7.5,1);
\draw (-1,1.5) -- (7.5,1.5);
\draw[opacity=0.2] (0.5,2) -- (5,2);
\draw[opacity=0.2,dashed] (-1,2) -- (0.5,2);
\draw[opacity=0.2,dashed] (5,2) -- (7.5,2);
\draw[opacity=0.2] (0.5,2) -- (1,1.5);
\draw[opacity=0.2] (2,2) -- (2.5,1.5);
\draw[opacity=0.2] (3.5,2) -- (4,1.5);
\draw[opacity=0.2] (5,2) -- (5.5,1.5);
\draw (1.5,1.5) -- (2,1);
\draw (3,1.5) -- (3.5,1);
\draw (4.5,1.5) -- (5,1);
\draw (2.5,1) -- (3,0.5);
\draw (4,1) -- (4.5,0.5);
\draw (3.5,0.5) -- (4,0);
\fill[black] (-1,0) circle (1.5pt) node[left]{$1$};
\fill[black] (-1,0.5) circle (1.5pt) node[left]{$2$};
\fill[black] (-1,1) circle (1.5pt);
\fill[black] (-1,1.5) circle (1.5pt) node[left]{$k$};
\fill[black,opacity=0.3] (-1,2) circle (1.5pt) node[left]{$n$};
\fill[black] (7.5,0) circle (1.5pt) node[right]{$1$};
\fill[black] (7.5,0.5) circle (1.5pt) node[right]{$2$};
\fill[black] (7.5,1) circle (1.5pt);
\fill[black] (7.5,1.5) circle (1.5pt) node[right]{$k$};
\fill[black,opacity=0.3] (7.5,2) circle (1.5pt) node[right]{$n$};
\fill[black,opacity=0.3] (0.5,2) circle (1.5pt);
\fill[black,opacity=0.3] (1,1.5) circle (1.5pt);
\fill[black,opacity=0.3] (2,2) circle (1.5pt);
\fill[black,opacity=0.3] (2.5,1.5) circle (1.5pt);
\fill[black,opacity=0.3] (3.5,2) circle (1.5pt);
\fill[black,opacity=0.3] (4,1.5) circle (1.5pt);
\fill[black,opacity=0.3] (5,2) circle (1.5pt);
\fill[black,opacity=0.3] (5.5,1.5) circle (1.5pt);
\fill[black] (1.5,1.5) circle (1.5pt);
\fill[black] (2,1) circle (1.5pt);
\fill[black] (3,1.5) circle (1.5pt);
\fill[black] (3.5,1) circle (1.5pt);
\fill[black] (4.5,1.5) circle (1.5pt);
\fill[black] (5,1) circle (1.5pt);
\fill[black] (2.5,1) circle (1.5pt);
\fill[black] (3,0.5) circle (1.5pt);
\fill[black] (4,0) circle (1.5pt);
\fill[black] (4,1) circle (1.5pt);
\fill[black] (4.5,0.5) circle (1.5pt);
\fill[black] (3.5,0.5) circle (1.5pt);
\end{tikzpicture}
 \caption{Subnetwork $\Gamma_0^{(k)}$.}\label{net:sub}
\end{figure}

\begin{thm}  \label{thm:GZ2}
$$
{\rm im}(L\Gamma_0) = \overline{\C_2}\cap\{t_0^k=0|\;k=0,\dots,n\}.
$$
\end{thm}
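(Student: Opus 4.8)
The plan is to prove the two inclusions separately. By Theorem \ref{thm:GZ1}, we already have $\im(L\Gamma_0)\subset\overline{\C_2}\cap\{t_0^k=0\}$, so the real content is the reverse inclusion: every point of $\overline{\C_2}\cap\{t_0^k=0\}$ is realized by some weighting of $\Gamma_0$. Since, by Proposition \ref{thm:GZ}, $\bar\partial$ gives an isomorphism $\C_2\cap\{t_0^k=0\}\xrightarrow{\sim}\C_{GZ}$, and since for a point $t\in\overline{\C_2}\cap\{t_0^k=0\}$ each row $(t_0^k,\dots,t_k^k)$ is simply the partial-sum encoding of the $k$-th row of eigenvalue data, it suffices to show the following: for any collection of tropical numbers $h_i^{(k)}$, $0<i\le k\le n$, satisfying the interlacing inequalities \eqref{eq:GZ}, there is a weighting $\epsilon$ of $\Gamma_0$ such that $l_i\Gamma_0^{(k)}(\epsilon) - l_{i-1}\Gamma_0^{(k)}(\epsilon) = h_i^{(k)}$ for all $i,k$; equivalently, $l_i\Gamma_0^{(k)}(\epsilon) = h_1^{(k)}+\dots+h_i^{(k)}$.

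The key step is an inductive (in $n$) or, better, a direct analysis of the maximal-multipath functionals on the specific network $\Gamma_0$. I would first establish a \emph{recursion} relating the functionals of $\Gamma_0[n]$ to those of $\Gamma_0[n-1]$. Looking at Figure \ref{net:triang}: $\Gamma_0[n]$ is built from $\Gamma_0[n-1]$ (sitting on the bottom $n-1$ lines, after the reduction in the Remark preceding Figure \ref{net:sub}) by adding the top horizontal line $y=n$ together with a "staircase" of $n-1$ diagonal edges that let a path drop from level $n$ down through successive levels. The weights to be assigned are: the horizontal segments on line $y=k$ and the diagonal drops. The crucial combinatorial observation is that an $i$-path in $\Gamma_0^{(k)}$ either stays entirely in $\Gamma_0^{(k-1)}$ (using none of the newly added top line) or uses the top line for exactly one of its $i$ constituent paths, which then either remains on line $y=k$ throughout or drops down via one diagonal; counting shows $l_i\Gamma_0^{(k)}$ is the maximum of $l_i\Gamma_0^{(k-1)}$ and $l_{i-1}\Gamma_0^{(k-1)}$ plus a contribution controlled by the new weights. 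This gives precisely the tropical analogue of the "adding a row" step, and it is the mechanism by which the interlacing inequalities $h_i^{(k+1)}\ge h_i^{(k)}\ge h_{i+1}^{(k+1)}$ are exactly what is needed to solve for the new weights.

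Concretely, given target values $h_i^{(k)}$ satisfying \eqref{eq:GZ}, I would assign the weights greedily from the bottom up: on line $y=1$ put weight $h_1^{(1)}$; having realized row $k-1$, choose the weights on line $y=k$ and the $k-1$ diagonal drop-edges so that the maximum functionals produce $h_1^{(k)},\dots,h_k^{(k)}$. The diagonal from line $k$ to line $k-1$ carries the "slack" $h_i^{(k)}-h_i^{(k-1)}$ (a tropical number, possibly $-\infty$ if some $h$'s are $-\infty$; this is why we work in $\T$ and take closures), and the interlacing inequalities guarantee these weights can be chosen so that the relevant maxima are attained by the intended multipaths and not accidentally exceeded by competing ones — this monotonicity/non-interference check is where one must be careful. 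I expect the main obstacle to be exactly this verification: proving that, with the greedily chosen weights, no \emph{unintended} multipath in $\Gamma_0^{(k)}$ has larger weight than the intended one, so that the $l_i$ come out equal to the prescribed partial sums rather than merely $\ge$ them. Once the interior case is done over $\R$, the statement over $\T$ with closures follows by a continuity/limit argument, sending the appropriate weights to $-\infty$, together with the already-established inclusion from Theorem \ref{thm:GZ1} and Lemma \ref{subnet} to handle degenerate boundary strata.
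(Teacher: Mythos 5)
Your proposal correctly identifies that, given Theorem \ref{thm:GZ1}, the substance of Theorem \ref{thm:GZ2} is surjectivity, and correctly locates where the difficulty lies: showing that the piecewise linear maxima $l_i\Gamma_0^{(k)}$ come out \emph{equal} to the prescribed partial sums rather than merely $\ge$ them, because of potentially competing multipaths. But the proposal then stops there: the sentence ``I expect the main obstacle to be exactly this verification'' acknowledges the gap but does not close it. The greedy bottom-up weight assignment and the recursion $l_i\Gamma_0^{(k)} = \max(l_i\Gamma_0^{(k-1)},\ l_{i-1}\Gamma_0^{(k-1)} + (\text{new weights}))$ are plausible in spirit, but they are stated without proof, and the crucial non-interference check --- that no unintended multipath beats the intended one --- \emph{is} the theorem. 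Without it, the argument does not establish anything beyond the (already known) one inclusion.

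The paper avoids the direct constructive verification by a reformulation you did not find. It fixes a specific non-degenerate collection $A=\{\alpha(k,i)\}$ of multipaths in $\Gamma_0$ (sources $(k,\dots,k-i+1)$, sinks $(1,\dots,i)$), and observes (Lemma \ref{tautology}) that the theorem reduces to a purely inequality-theoretic statement: \emph{whenever a weighting $\epsilon$ is such that the numbers $w_{\alpha(k,i)}(\epsilon)$ satisfy the interlacing inequalities, each $\alpha(k,i)$ is already a maximal multipath}. Since $w_A$ is a linear isomorphism (Lemma \ref{nondegenerate}, by a triangularity argument on the Jacobian), this immediately gives surjectivity once the inclusion $\bint(A)\subset\B_{\max}(A)$ is established. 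The latter is then proved by reinterpreting the interlacing inequalities as positivity/negativity of region functionals $r^\nearrow_{[k,i]}$, $r^\searrow_{[k,i]}$ built from cells of the complement of $\Gamma_0$ (Lemma \ref{relgraph}), and using these to ``push'' a putatively better competing multipath toward $\alpha(k,i)$ without decreasing its weight (Lemmas \ref{sinks}, \ref{sources}). This is exactly the non-interference check you flagged as the missing step; the region functionals are the mechanism. Your proposal neither introduces this reformulation nor supplies any substitute for it, so as written it does not constitute a proof.
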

Thus, the image of the map $L\Gamma_0$ coincides with the closure of the cone defined by the interlacing inequalities and describing the eigenvalues of Hermitian matrices.

\subsection{Planar networks and Horn}
We will call two planar networks $(\Gamma,[a,b])$ and
$(\Delta,[a',b'])$ {\em composable} if $a'=b$ and the set of sources
of $\Delta$ is a subset of the set of sinks of $\Gamma$. The network
$(\Gamma\circ\Delta,[a,b'])$ is then defined in the obvious manner.

\begin{defn} We say that
  a subnetwork of
  $\Gamma\circ\Delta$ is  a $\Gamma\Delta$-path if it is the union of the composable
  multipaths $\Gamma$ and $\Delta$. Such a subnetwork belongs to a set
\[  \pp ki =\{\alpha=\gamma\cup\delta|\;\gamma\in P_k\Gamma,\,
\delta\in P_i\Delta\ \text{ a composable pair}\},
\]
for some $k \geq i \geq 0$.
\end{defn}

We have the following characterization of these subnetworks.
\begin{lem} \label{lem:charpaths} A subnetwork $\alpha$ of
  $\Gamma\circ\Delta$ is a $\Gamma\Delta$-path if it has only vertices
  of degrees $(0,1)$, $(1,0)$, and $(1,1)$  and
  \begin{itemize}
  \item the vertices of degree $(0,1)$ are sources of $\Gamma$,
  \item the vertices of degree $(1,0)$ are sinks of $\Gamma$ or $\Delta$.
  \end{itemize}
\end{lem}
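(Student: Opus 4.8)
The plan is to prove the two directions of the "if and only if" separately, although the statement as written only asserts one direction ("if $\alpha$ has the listed degree property then $\alpha$ is a $\Gamma\Delta$-path"); I will prove the characterization as a genuine equivalence, since the converse is easy and will be needed later. First, the forward (easy) direction: suppose $\alpha=\gamma\cup\delta$ with $\gamma\in P_k\Gamma$ and $\delta\in P_i\Delta$ composable. By definition of a multipath, every vertex of $\gamma$ that is internal to $\Gamma$ has degree $(1,1)$ in $\gamma$, and similarly for $\delta$; the sources of $\gamma$ have degree $(0,1)$ and its sinks degree $(1,0)$, and the sinks of $\delta$ have degree $(1,0)$. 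The only subtlety is at the gluing line $\{x=b\}$: a vertex $v$ there which is a sink of $\gamma$ and simultaneously a source of $\delta$ acquires one incoming edge (from $\gamma$) and one outgoing edge (from $\delta$), hence degree $(1,1)$; a vertex there used by $\gamma$ but not $\delta$ has degree $(1,0)$; one used by $\delta$ but not $\gamma$ would have degree $(0,1)$, but composability forces the sources of $\delta$ to be among the sinks of $\gamma$, so this case accounts for nothing new, and in any case such a vertex is a source of $\Gamma\circ\Delta$ only if it lies on $\{x=a\}$ — wait, this is precisely the point to be careful about, see below. Thus $\alpha$ has only vertices of the three listed degrees, the $(0,1)$-vertices are sources of $\Gamma$, and the $(1,0)$-vertices are sinks of $\Gamma$ (those on $\{x=b\}$ not continued into $\Delta$) or sinks of $\Delta$.

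For the converse (the substantive direction): let $\alpha$ be a subnetwork of $\Gamma\circ\Delta$ all of whose vertices have degree $(0,1)$, $(1,0)$, or $(1,1)$, with the two bulleted constraints. Set $\gamma=\alpha\cap\Gamma$ and $\delta=\alpha\cap\Delta$ (intersection of edge sets with the two halves of the strip, together with incident vertices). I claim $\gamma\in P_k\Gamma$ and $\delta\in P_i\Delta$ for suitable $k\ge i$. The key local observation is that for any internal vertex $v$ of $\Gamma$, the edges of $\Gamma\circ\Delta$ incident to $v$ are exactly the edges of $\Gamma$ incident to $v$ (since $v\notin\{x=b\}$), so $v$ has degree $(1,1)$ in $\alpha$ forces degree $(1,1)$ in $\gamma$; thus $\gamma$ is a multipath in $\Gamma$. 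Similarly $\delta$ is a multipath in $\Delta$. Composability of the pair $(\gamma,\delta)$ amounts to checking that every source of $\delta$ is a sink of $\gamma$: a source of $\delta$ is a vertex $v$ on $\{x=b\}$ with an outgoing $\delta$-edge; in $\alpha$, $v$ has degree $(0,1)$ or $(1,1)$; the first is excluded because $v\in\{x=b\}$ is not a source of $\Gamma$, contradicting the first bullet; so $v$ has degree $(1,1)$ in $\alpha$, its incoming edge lies in $\Gamma$, hence $v$ is a sink of $\gamma$, as needed. Finally, the number of sources of $\delta$ ($=i$) is at most the number of sinks of $\gamma$ ($=k$, equal to the number of sources of $\gamma$), giving $k\ge i\ge0$ and $\alpha\in\pp ki$.

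The step I expect to require the most care is the bookkeeping at the gluing line $\{x=b\}$: one must pin down precisely how the degree of a vertex $v$ on this line in $\Gamma\circ\Delta$ decomposes into a "$\Gamma$-part" and a "$\Delta$-part", and confirm that the degree-$(1,1)$ vertices on $\{x=b\}$ are exactly the vertices that are simultaneously sinks of $\gamma$ and sources of $\delta$ (the ones effecting the concatenation), the degree-$(1,0)$ vertices there are sinks of $\gamma$ not continued, and — by the first bullet together with the fact that a point of $\{x=b\}$ is never a source of $\Gamma$ — there are no degree-$(0,1)$ vertices on $\{x=b\}$ in $\alpha$ at all. A subtlety worth flagging is whether a vertex could lie on both $\{x=a\}$ and $\{x=b\}$ (i.e. a degenerate one-column strip); since the definition of composability presupposes $a'=b$ and the networks are genuinely two-sided, this does not occur, but it is cleanest to note it explicitly. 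Once this local analysis at $\{x=b\}$ is settled, the rest is the routine matching of definitions carried out above.
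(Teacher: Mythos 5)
The paper states Lemma~\ref{lem:charpaths} with no proof --- it is left as a routine verification of the definitions, so there is no argument in the paper to compare against. Your proof is correct and is the natural one: split $\alpha$ into $\gamma=\alpha\cap\Gamma$ and $\delta=\alpha\cap\Delta$, verify that each is a multipath, and establish composability by analysing the vertices on the gluing line $\{x=b\}$. You are also right that, although the lemma is worded only as a sufficiency claim (``if''), the sentence introducing it calls it a characterization, and recording the easy converse as well does no harm.

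Two small points would make the substantive direction airtight. First, when you deduce that an internal vertex $v$ of $\Gamma$ has degree $(1,1)$ in $\gamma$, you implicitly use that $v$ has degree $(1,1)$ in $\alpha$; this requires noting that degrees $(0,1)$ and $(1,0)$ are excluded for such $v$ precisely by the two bullet hypotheses (a degree-$(0,1)$ vertex must be a source of $\Gamma$, a degree-$(1,0)$ vertex a sink of $\Gamma$ or $\Delta$, and an internal vertex of $\Gamma$ is neither). The same inference is needed for internal vertices of $\Delta$. Second, your analysis at $\{x=b\}$ relies on the fact that every $\Gamma$-edge incident to a vertex on that line is incoming and every $\Delta$-edge is outgoing; this follows because edges of a planar network are nonvertical segments with the left-to-right orientation, so edges from $\Gamma$ arrive from $x<b$ and edges from $\Delta$ depart to $x>b$. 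Stating this explicitly turns what you describe as ``bookkeeping'' into a one-line argument and closes the proof.
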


Using this  type of subnetworks, we can fill in the values at the nodes of
the triangulation $T=T_n$ as follows. For $n \geq k \geq i \geq 0$ and a
weighting $\epsilon:E(\Gamma\circ\Delta)\to\T$, we define the piecewise
linear function
$$
\mm ki(\epsilon) =
\max_{\alpha \in \pp ki} w_\alpha(\epsilon).
$$
Putting these together, we obtain a  piecewise linear map
  $M{\Gamma\Delta}:\T^{E(\Gamma\circ\Delta)}\to\T^{VT}$ given by
  $t^k_i=\mm ki(\epsilon)$  for $n \geq k \geq i \geq 0$.

\begin{thm} \label{thm:Horn1} Let $\Gamma$ and $\Delta$ be two
  composable networks of rank $n$. Then the
  piecewise linear map $M{\Gamma\Delta}$ satisfies
\[     \im(M{\Gamma\Delta}) \subset \overline{\C_3}\cap\{t^0_0=0\},
\]
where $\C_3$ is the polyhedral cone defined by  inequalities
\eqref{eq:KT}.
\end{thm}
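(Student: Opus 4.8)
The plan is to establish, for every weighting $\epsilon\in\T^{E(\Gamma\circ\Delta)}$, that the tableau $M\Gamma\Delta(\epsilon)$ satisfies the inequalities \eqref{eq:KT} and the normalization $t^0_0=0$. The normalization is immediate, since $\pp00$ consists of the single edgeless subnetwork and so $t^0_0=\mm00=0$. Granting the pointwise inequalities, the containment $\im(M\Gamma\Delta)\subset\overline{\C_3}\cap\{t^0_0=0\}$ follows routinely, using that $M\Gamma\Delta$ is continuous and $\R^{E(\Gamma\circ\Delta)}$ is dense in $\T^{E(\Gamma\circ\Delta)}$, so that it suffices to approximate by weightings for which the relevant $\pp ki$ are nonempty and the tableau is an honest point of $\C_3\cap\{t^0_0=0\}$. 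The substance of the theorem is therefore the system \eqref{eq:KT}.

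For the rhombus inequalities, I would fix $\epsilon$ and treat, say, the third (Knutson--Tao) type,
\[
  \mm ki(\epsilon)+\mm k{i-1}(\epsilon)\ \geq\ \mm{k+1}i(\epsilon)+\mm{k-1}{i-1}(\epsilon),
\]
the other two being handled the same way. If $\pp{k+1}i$ or $\pp{k-1}{i-1}$ is empty the right-hand side is $-\infty$ and there is nothing to prove; otherwise choose $\alpha\in\pp{k+1}i$ and $\beta\in\pp{k-1}{i-1}$ realizing the two maxima on the right. The crux is a \emph{recombination lemma}: from the pair $(\alpha,\beta)$ one can produce $\Gamma\Delta$-paths $S_1\in\pp ki$ and $S_2\in\pp k{i-1}$ whose edges satisfy $E S_1\sqcup E S_2=E\alpha\sqcup E\beta$ as multisets. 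Granting this, additivity of $w$ over edges gives $w_{S_1}(\epsilon)+w_{S_2}(\epsilon)=w_\alpha(\epsilon)+w_\beta(\epsilon)$, and since $\mm ki(\epsilon)\geq w_{S_1}(\epsilon)$ and $\mm k{i-1}(\epsilon)\geq w_{S_2}(\epsilon)$ the desired inequality follows. The first two rhombus types are identical, with the target index pairs $\{(k+1,i),(k,i-1)\}$, resp.\ $\{(k+1,i),(k,i)\}$, obtained from the source pairs by the same kind of $\pm1$ redistribution of the two indices.

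To prove the recombination lemma I would argue by planar uncrossing, in the spirit of Lindström--Gessel--Viennot and of Fomin--Zelevinsky. By Lemma \ref{lem:charpaths}, a member of $\pp ki$ is a union of $k$ pairwise vertex-disjoint directed paths of $\Gamma\circ\Delta$, each beginning at a source of $\Gamma$, with exactly $i$ of them ending at a sink of $\Delta$ and $k-i$ at a sink of $\Gamma$. Thus $\alpha\cup\beta$ is a family of left-to-right directed paths in a planar strip, vertex-disjoint within $\alpha$ and within $\beta$ but possibly meeting across. Re-pairing, at every vertex of the multigraph $\alpha\sqcup\beta$, the incoming with the outgoing edge-ends in the planar (height) order leaves the edge multiset unchanged, leaves the multiset of left endpoints (sources of $\Gamma$) and of right endpoints (sinks of $\Gamma$ or $\Delta$) unchanged, and removes all crossings, producing a family of $(k+1)+(k-1)=2k$ pairwise non-crossing directed paths. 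One then checks that this family can be $2$-colored into vertex-disjoint classes of the prescribed sizes ($k$ and $k$) and with the prescribed numbers of endpoints at sinks of $\Delta$ (the total of such endpoints being $i+(i-1)$, to be split as $i$ and $i-1$); taking $S_1,S_2$ to be the two classes completes the lemma. For the first two rhombus types the path-count multiset $\{k,k+1\}$ already equals the target, so one may instead keep the $\Gamma$-parts of $\alpha$ and $\beta$ and re-pair only on the $\Delta$-side, at the cost of a matching constraint relating the new $\Delta$-sources to the sinks of the kept $\Gamma$-multipaths.

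The hard part is precisely this recombination lemma, and inside it two points: first, that the non-crossing re-paired family admits the required $2$-coloring into vertex-disjoint multipaths — two re-paired paths may still \emph{touch} at a vertex through which both $\alpha$ and $\beta$ ran, and one must use planarity to see that the ``touching'' constraints are consistent and still leave room to meet the cardinality and endpoint-count conditions; and second, that the rearrangement is coordinated across the middle line $L$, so that the set of $\Gamma$-sinks selected to continue into $\Delta$ in the first color class coincides with the source set of the portion of the rearranged $\Delta$-multipath lying in that class, and likewise for the second class. The natural way to handle both is an induction on the number of crossing pairs of path-segments in $\alpha\cup\beta$: one resolves an innermost crossing by the obvious local swap, which preserves all edge multisets and merely transposes a pair of endpoints, and then applies the inductive hypothesis; the base case of a non-crossing pair $(\alpha,\beta)$ is settled by listing the paths in order of height and reading off the two color classes directly. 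The cases in which some weights are $-\infty$ need no separate treatment, as all the identities and inequalities above are valid in the semifield $\T$.
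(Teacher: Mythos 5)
Your proposal takes a genuinely different route from the paper's, and it has a gap. The paper's central device is the ``canonical path decomposition'' of $\Theta=\alpha\cup\beta$: two edges are declared equivalent if they are either both incoming or both outgoing at a vertex, and the equivalence classes are unoriented paths of alternating orientation along which every valid $2$-coloring of $\Theta$ (i.e., every decomposition into two $\Gamma\Delta$-multipaths of the correct format) must alternate. This identifies exactly $2^{|Q|}$ valid colorings; the paper then sorts the canonical paths by endpoint type ($Q_{LL}$, $Q_{L0}$, $Q_{0R}$, $Q_{LR}$, $Q_{RR}$, $Q_{00}$, $Q_{cl}$) and hits the required source- and sink-edge counts by a short parity argument.

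Your planar-uncrossing idea only reaches the strict subset of decompositions in which the two multipaths are non-crossing, namely those for which each re-paired path is monochromatic. Valid decompositions may well ``cross'' at a shared degree-$(2,2)$ vertex (red enters on the lower incoming edge and exits on the upper outgoing edge), and this extra freedom is exactly what the paper exploits. Two things are therefore left unproved in your sketch, and together they are the substance of the theorem. First, the touching graph of the re-paired paths need not be bipartite: if $p_1<p_2<p_3$ in height, with $p_1$--$p_2$ and $p_2$--$p_3$ touching inside $\Gamma$, $p_2$ terminating at the middle line, and $p_1$, $p_3$ reconnecting inside $\Delta$, then the touching graph contains an odd $3$-cycle and no non-crossing decomposition exists at all, even though a (crossing) decomposition is guaranteed. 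Second, even when the touching graph is bipartite, you must choose the bipartition of each connected component so that the total path count \emph{and} the count of $\Delta$-sink endpoints both come out right simultaneously, and nothing in your sketch controls this; the induction on crossing pairs and the base case ``listing the paths by height'' do not engage with the interaction between the touching constraints and the two cardinality constraints, which is precisely where the paper's $Q$-type bookkeeping does its work. The outer frame of your argument --- the normalization $t^0_0=0$, passage to $\overline{\C_3}$ by continuity, and reduction of each rhombus inequality to a recombination lemma for a pair of maximizing multipaths with equal total weight --- does match the paper.
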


Similarly to the Gelfand--Zeitlin case, we have the following
completeness result.

\begin{thm}  \label{thm:Horn2}
For planar networks $\Gamma_0$ shown in Figure \ref{net:triang} and $\Delta_0$ shown in Figure \ref{net:straight}, we have
$$
\im(M{\Gamma_0  \Delta_0}) = \overline{\C_3}\cap\{t^0_0=0\},
$$
and consequently
\[   \im(\partial\circ M{\Gamma_0\Delta_0})=\overline\CH.
\]
\end{thm}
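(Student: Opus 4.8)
The plan is to prove the two displayed equalities in Theorem~\ref{thm:Horn2} separately. The second, $\im(\partial\circ M{\Gamma_0\Delta_0})=\overline\CH$, should follow immediately from the first together with the Knutson--Tao theorem (Theorem~\ref{thm:KT}) in its normalized form \eqref{thm:horn0}: once we know $\im(M{\Gamma_0\Delta_0}) = \overline{\C_3}\cap\{t^0_0=0\}$, applying $\partial$ to both sides gives $\im(\partial\circ M{\Gamma_0\Delta_0}) = \partial(\overline{\C_3}\cap\{t^0_0=0\})$, and since $\partial$ is a linear (hence continuous) surjection onto its image and commutes with closures of polyhedral cones, this equals $\overline{\partial(\C_3\cap\{t^0_0=0\})} = \overline{\CH}$. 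So the real content is the first equality, and by Theorem~\ref{thm:Horn1} the inclusion $\im(M{\Gamma_0\Delta_0}) \subset \overline{\C_3}\cap\{t^0_0=0\}$ is already known; what remains is the reverse inclusion
\[
\overline{\C_3}\cap\{t^0_0=0\} \subset \im(M{\Gamma_0\Delta_0}).
\]

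To establish this, I would first reduce to showing that the relative interior of $\C_3\cap\{t^0_0=0\}$ (the locus where all inequalities \eqref{eq:KT} are strict and all entries finite) lies in $\im(M{\Gamma_0\Delta_0})$; since $M{\Gamma_0\Delta_0}$ is a piecewise-linear map whose image is a closed subset of $\T^{VT}$ containing this dense subset, the full closed cone follows. Next I would exploit the structure of the composition $\Gamma_0\circ\Delta_0$: the network $\Gamma_0$ (Figure~\ref{net:triang}) has the "staircase" shape whose subnetworks $\Gamma_0^{(k)}$ are, up to insertion/deletion of $(1,1)$-vertices and unreachable vertices, copies of $\Gamma_0[k]$ (by the Remark preceding Figure~\ref{net:sub}), while $\Delta_0$ is the trivial $n$-strand network (Figure~\ref{net:straight}) whose weights directly carry the $\mu$-data. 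The idea is then: given a target tableau $t=\{t^k_i\}\in\C_3\cap\{t^0_0=0\}$, reconstruct the edge weights explicitly. The differences along the three boundaries give $\lambda_i, \mu_i, \nu_i$ via $\partial$, and the interior values $t^k_i$ should be matched by choosing the weights on the crossing edges of $\Gamma_0$ (the diagonal edges in Figure~\ref{net:triang}) in a way dictated by the "rhombus slacks" $t^{k+1}_i + t^k_{i-1} - t^{k+1}_{i-1} - t^k_i \geq 0$, etc. Concretely, I would set up a system expressing each $\mm ki$ as the max over $\Gamma\Delta$-paths and verify that, for the chosen weighting, the maximizing configuration is the "greedy" one whose weight equals $t^k_i$ exactly because the hive inequalities guarantee that no competing multipath does better. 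This is where Theorem~\ref{thm:GZ2} enters as a model: the restriction of $M{\Gamma_0\Delta_0}$ to the weightings of $\Delta_0$ alone recovers $L\Gamma_0$-type data, so the Gelfand--Zeitlin completeness result handles the "$\C_2$-part" of the cone, and only the third family of rhombus inequalities in \eqref{eq:KT} needs the genuinely new argument coming from the $\Delta$-side.

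The main obstacle, I expect, is precisely the explicit inversion: showing that the piecewise-linear map $M{\Gamma_0\Delta_0}$ is \emph{surjective} onto $\overline{\C_3}\cap\{t^0_0=0\}$, i.e., producing, for each point of the target cone, an actual weighting and then verifying that all the maxima $\mm ki$ are attained with the correct values and not overshot by some unexpected disjoint multipath. The disjointness condition in the definition of $\pp ki$ is subtle — a pair consisting of a $k$-path in $\Gamma$ and an $i$-path in $\Delta$ that are "composable" — and one must check that the particular weighting forces the optimal such pair to have weight exactly $t^k_i$. I would handle this by an inductive argument on $n$ (peeling off the top row of the triangulation, which corresponds to removing the top source/sink pair and passing to $\Gamma_0^{(n-1)}\circ\Delta_0^{(n-1)}$), using the translation invariance from Remark~\ref{trans3} to normalize at each stage, and using a Lindström--Gessel--Viennot-type argument (in the tropical/min-plus setting) to identify the optimal multipaths. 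The base cases $n=1,2$ are direct computations, and the inductive step should reduce the verification of the new third-type rhombus inequalities to a single additional edge weight on the bottom diagonal strand of $\Gamma_0$.
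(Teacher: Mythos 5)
Your high-level strategy is sound and roughly parallels the paper: the second displayed equality follows from the first plus Knutson--Tao; the hard direction is the reverse inclusion; and the attack is to exhibit an explicit weighting realizing a given tableau and then verify by an induction on $n$ that it is realized through \emph{maxima}. You also correctly anticipate that the bottleneck is verifying maximality. However, there is a genuine gap precisely at this bottleneck, in two places.

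First, you never isolate the analogue of Lemma~\ref{tautology}. The paper's mechanism is to fix in advance a distinguished collection of multipaths $B=\{\beta(k,i)\}$ in $\Gamma_0\circ\Delta_0$ (explicitly $\beta(k,i)=\alpha(n-i,k-i)\cup\bigcup_{j=n-i+1}^n\tau(j)$), use the non-degeneracy of the linear map $w_B$ (in the spirit of Lemma~\ref{nondegenerate}) to hit any target $t\in\C_3\cap\{t_0^0=0\}$ with $w_B(\epsilon)=t$, and then reduce the surjectivity of $M\Gamma_0\Delta_0$ entirely to the purely combinatorial inclusion \eqref{inclusion}: whenever $w_B(\epsilon)$ satisfies the $\C_3$ inequalities, each $\beta(k,i)$ is a \emph{maximal} multipath for $\epsilon$. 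Your proposal gestures at "the greedy configuration wins because of the hive inequalities" but never names this reduction or the collection $B$, so it is unclear what statement your induction is actually proving.

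Second, and more critically, your inductive step ("peel off the top row, pass to $\Gamma_0^{(n-1)}\circ\Delta_0^{(n-1)}$") does not go through without first establishing that some maximal $\alpha\in P_i^k\Gamma_0\Delta_0$ can be assumed to \emph{contain} the top strand $\tau(n)$. This is the content of the three normalization lemmas (\ref{hornb}, \ref{horna1}, \ref{hornc1}), which repeatedly modify a putative maximal multipath by adding the signed weights of cell-regions $r^\nearrow$, $r^\searrow$, $r^\rightarrow$ — nonnegative by the $\C_3$ inequalities in their graphical form \eqref{graphrel} — so as to push the top source to height $n$, the top sink of $\delta$ to height $n$, and the top sinks of $\alpha''$ down. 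Only after that can one delete $\tau(n)$ and invoke the rank-$(n-1)$ statement. Without these lemmas, "peeling" is not justified. Your invocation of a Lindström--Gessel--Viennot-type argument is a red herring here: LGV gives sign-cancellation identities for non-intersecting lattice paths, but the paper's argument is an inequality-preserving exchange argument on regions, not a determinantal one, and I do not see how a tropical LGV would substitute for the region-pushing lemmas.

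A minor additional caution: your closing remark that "$\partial$ commutes with closures of polyhedral cones" deserves care, since the closure here is taken in $\T^{VT}$ (allowing $-\infty$ coordinates), not in $\R^{VT}$. The paper sidesteps this by proving the set-level equality $\im(M\Gamma_0\Delta_0)=\overline{\C_3}\cap\{t_0^0=0\}$ directly and then applying $\partial$.
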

In other words, the image of the map $\partial\circ M\Gamma_0\Delta_0$ coincides with the closure of the Horn cone describing the eigenvalues of triples of Hermitian matrices.

\section{Proofs}

\subsection{Proof of Theorem \ref{thm:GZ1}} We observe that a path in
$P_1\Gamma$ can also be described as the graph of a continuous
function $f:[a,b]\to\R$ such that
$\gr(f)=\{(x,f(x)),\,x\in[a,b]\}\subset\Gamma$. In these terms, we
can describe $P_k\Gamma$ as
\[   P_k\Gamma = \{(f_1,\dots,f_k)|\;
f_i:[a,b]\to\R\text{ continuous},\,\gr(f_i)\subset\Gamma, f_i<f_{i+1},
\,i=1,\dots,k\}.
\]
Here and below we use the notation $f<g$ as a shorthand for
$f(x)<g(x)$,  $x\in[a,b]$.  To make our notation more readable,
for a $k$-tuple $\blf=(f_1,\dots,f_k)\in P_k\Gamma$, we will write
\[      w[\blf]\text{ for the functional
}w_{\cup_{i=1}^k\gr(f_i)}:\T^{E\Gamma}\to \T.
\]

 Theorem  \ref{thm:GZ1}  is equivalent to the inequalities
\begin{equation}  \label{eq:ineqgamma}
\begin{array}{lll}
l_i\Gamma^{(k)}(\epsilon) + l_{i-1}\Gamma^{(k-1)}(\epsilon) & \geq & l_{i-1}\Gamma^{(k)}(\epsilon) +
l_{i}\Gamma^{(k-1)}(\epsilon),   \\
l_i\Gamma^{(k)}(\epsilon) + l_{i}\Gamma^{(k-1)}(\epsilon)  & \geq &l_{i+1}\Gamma^{(k)}(\epsilon) +
l_{i-1}\Gamma^{(k-1)}(\epsilon),
\end{array}
\end{equation}
for $\epsilon\in\T^{E\Gamma}$.

Consider the first of the two inequalities. Our method of proof is to show that,
for $\blf\in P_{i-1}\Gamma^{(k)}$ and $\blg\in P_i\Gamma^{(k-1)}$, there
exist $\tilde\blf\in P_{i-1}\Gamma^{(k-1)}$ and $\tilde\blg\in
P_i\Gamma^{(k)}$  such that
\begin{equation}
  \label{ftilde}
  w[\tilde\blf]+w[\tilde\blg]=w[\blf]+w[\blg].
\end{equation}

For a positive integer $N$, we consider the map
$\dor=(\dor_1,\dots,\dor_N):\R^N\to\R^N$ sending each vector to the vector with the same coordinates but listed in
decreasing order: $\sigma_1\geq\sigma_2\geq\dots\geq\sigma_N$. Thus
$\dor_1:\R^N\to\R$ is simply the largest coordinate of a
vector. Clearly, $\dor$ is a piecewise linear, continuous map. Now,
for a  planar network $\Gamma$  and $2\leq i\leq n$, we can
define a map
\[ P_{i-1}\Gamma\times P_i\Gamma\to
P_{i-1}\Gamma\times P_i\Gamma\]
in the following way. For an $(i-1)$-path $\blf=(f_1,\dots,f_{i-1})$
and an $i$-path $\blg=(g_1,\dots,g_i)$, we set
$\blf\cup\blg=(f_1,\dots,f_{i-1},g_1,\dots,g_i)$.
We observe that, for
$x\in[a,b]$, no real number can occur more than twice in the sequence
$$(f_1(x),\dots,f_{i-1}(x),g_1(x),\dots,g_i(x))$$ of length $(2i-1)$. Define the function
$\dor_j(\blf\cup\blg)$ by
$$\dor_j(\blf\cup\blg)(x)=\dor_j(f_1(x),\dots,f_{i-1}(x),g_1(x),\dots,g_i(x)).$$
We have $\dor_{j-1}(\blf\cup\blg)<\dor_{j+1}(\blf\cup\blg)$  for all $j$.
Now we can define our map by sending the pair $(\blf,\blg)$ to the pair $(\even(\blf,\blg),\odd(\blf,\blg))$, where
$
\even(\blf,\blg)=
[(\dor_2(\blf\cup\blg),\dor_4(\blf\cup\blg),\dots,\dor_{2i-2}(\blf\cup\blg))]\in
P_{i-1}\Gamma
$
and
\[   \odd(\blf,\blg)=
[(\dor_1(\blf\cup\blg),\dor_3(\blf\cup\blg),\dots,\dor_{2i-1}\blf\cup\blg))]\in
P_i\Gamma.
\]
This map has the following two properties:
\begin{itemize}
\item if $\blf\in P_{i-1}\Gamma$ and $\blg\in P_i\Gamma$, then
\[  w[\even(\blf,\blg)]+w[\odd(\blf,\blg)]=w[\blf]+w[\blg],
\], 
\item if $\blf\in P_{i-1}\Gamma^{(k)}$ and $\blg\in
  P_i\Gamma^{(k-1)}$, then
\[  \even(\blf,\blg)\in  P_{i-1}\Gamma^{(k-1)}  \text{ and }
\odd(\blf,\blg)\in P_{i}\Gamma^{(k)}.
\]
\end{itemize}
Thus, these two multipaths can be chosen as
$\tilde\blf$ and $\tilde\blg$ in \eqref{ftilde}, which implies the first
inequality in \eqref{eq:ineqgamma}.

The second inequality is proved in a similar manner. We define a map
\[ P_{i+1}\Gamma\times P_{i-1}\Gamma\to P_{i}\Gamma\times P_{i}\Gamma\]
by sending a pair of $i$-paths $(\blf,\blg)$ of length $2i$ to the pair $(\even(\blf,\blg),\odd(\blf,\blg))$, where

\[
\even(\blf,\blg)=
[(\dor_2(\blf\cup\blg),\dor_4(\blf\cup\blg),\dots,\dor_{2i}(\blf\cup\blg))]\in
P_{i}\Gamma
\]
and
\[   \odd(\blf,\blg)=
[(\dor_1(\blf\cup\blg),\dor_3(\blf\cup\blg),\dots,\dor_{2i-1}\blf\cup\blg))]\in
P_i\Gamma.
\]
As in the previous case, we observe that $
w[\even(\blf,\blg)]+w[\odd(\blf,\blg)]=w[\blf]+w[\blg]$, and if
$\blf\in P_{i+1}\Gamma^{(k)}$ and $\blg\in P_{i-1}\Gamma^{(k-1)}$,
then $\even(\blf,\blg)\in P_{i}\Gamma^{(k-1)}$ and $\odd(\blf,\blg)\in
P_{i}\Gamma^{(k)}$, which implies the second inequality in \eqref{eq:ineqgamma}.

\subsection{Proof of Theorem  \ref{thm:GZ2}}

We will prove a somewhat stronger statement: there exists a graph and
a collection of multipaths for which the image of $L\Gamma$ is the full cone $\C_{GZ}$. In this section, we
return to  the functional $w_\alpha:\T^{E\Gamma}\to\T$, for  $\alpha\in
E\Gamma$, defined as
$w_\alpha=\sum_{e\in\alpha}w_e$.

We will call a choice of multipaths $\alpha(k,i)\in P_i\Gamma^{(k)}$,
$k=1,\dots,n,\, i=1,\dots,k$ a {\em collection}. Then to each
collection $A=\{\alpha(k,i)\}_{i,k}$, we can associate the linear function $w_A:\T^{E\Gamma}\to\T^{VT}\cap\{t_0^k=0,\, k=0,\dots,n\}$ defined by $t_i^k=w_{\alpha(k,i)}$ for all $i$ and $k$.  We say that the collection $A$ {\em non-degenerate} if $w_A$ is surjective.

Let $\Gamma$ be a planar network, and let
$A=\{\alpha(k,i)\}$ be a non-degenerate collection of multipaths in
$\Gamma$. Denote by $\bint(A)\subset\T^{E\Gamma}$ the subset of those
weightings of $\Gamma$ for which the collection $A$ satisfies the
interlacing inequalities
\begin{eqnarray*} \bint(A) &=&
\{\epsilon\in\T^{E\Gamma}|\;(t^k_i=w_{\alpha(k,i)}(\epsilon),0<i\leq
k\leq n)\in\overline{\C_2}\}=\\
&=&w_A^{-1}(\overline{\C_2}\cap\{t_0^k=0|\;k=0,\dots,n\}),
\end{eqnarray*}
and let $\B_{\max}(A)$ be the subset of
weightings for which each of the multipaths in $A$ is maximal:
\[  \B_{\max}(A) = \{\epsilon\in\T^{E\Gamma}|\;
l_i\Gamma^{(k)}(\epsilon)=w_{\alpha(k,i)}(\epsilon)\}.
\] In these terms, Theorem \ref{thm:GZ1}
is equivalent to the statement that, for any
collection $A$, we have $\B_{\max}(A)\subset\bint(A)$.
  \begin{lem} \label{tautology}
    Suppose that a planar network $\Gamma$ has a
    non-degenerate collection $A$ such that
    $\B_{\max}(A)\supset\bint(A)$. Then, in fact,
    \begin{equation}
      \label{eleq}
  L\Gamma(\B_{\max}(A))=L\Gamma(\bint(A))=\overline{\C_2}\cap\{t_0^k=0|\;k=0,\dots,n\}
    \end{equation}
  \end{lem}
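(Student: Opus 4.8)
\textbf{Proof plan for Lemma \ref{tautology}.}

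The plan is to derive \eqref{eleq} from the hypothesis $\B_{\max}(A)\supset\bint(A)$ together with Theorem \ref{thm:GZ1} and the non-degeneracy of $A$ by a short chain of inclusions. First I would record the two inclusions that hold \emph{for free}: on the one hand, Theorem \ref{thm:GZ1} (in its reformulation $\B_{\max}(A)\subset\bint(A)$, valid for every collection $A$) combined with the standing hypothesis gives the set equality $\B_{\max}(A)=\bint(A)$; on the other hand, the definition of $L\Gamma$ means that on $\B_{\max}(A)$ the tableau entry $t^k_i=l_i\Gamma^{(k)}$ equals $w_{\alpha(k,i)}$, i.e.\ $L\Gamma$ restricted to $\B_{\max}(A)$ agrees with the \emph{linear} map $w_A$. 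Hence $L\Gamma(\B_{\max}(A))=w_A(\B_{\max}(A))=w_A(\bint(A))$.

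Next I would compute $w_A(\bint(A))$. By the very definition of $\bint(A)$ as $w_A^{-1}\bigl(\overline{\C_2}\cap\{t_0^k=0\mid k=0,\dots,n\}\bigr)$, we have
\[
w_A(\bint(A)) = w_A\bigl(\T^{E\Gamma}\bigr)\cap \overline{\C_2}\cap\{t_0^k=0\mid k=0,\dots,n\},
\]
and here the non-degeneracy of $A$ is exactly what is needed: $w_A$ is surjective onto $\T^{VT}\cap\{t_0^k=0\mid k=0,\dots,n\}$, so the intersection with $w_A(\T^{E\Gamma})$ is the whole $\overline{\C_2}\cap\{t_0^k=0\mid k=0,\dots,n\}$. (One should be a little careful that $w_A$ is only claimed surjective over $\R$, not over $\T$; I would handle the tropical boundary points by a limiting argument, using that $\overline{\C_2}$ is the closure in $\T^N$ of its real part and that $w_A$, being linear with real coefficients, is continuous for the relevant topology on $\T^{E\Gamma}$ — this is the one genuinely fiddly point.) Combining the two displays yields $L\Gamma(\B_{\max}(A)) = \overline{\C_2}\cap\{t_0^k=0\mid k=0,\dots,n\}$, which is the middle and right terms of \eqref{eleq}.

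Finally, for the remaining equality $L\Gamma(\B_{\max}(A))=L\Gamma(\bint(A))$, the inclusion $\subset$ is immediate since $\B_{\max}(A)\subset\bint(A)$ always. For $\supset$: any $\epsilon\in\bint(A)$ has $L\Gamma(\epsilon)\in\overline{\C_2}\cap\{t_0^k=0\}$ by Theorem \ref{thm:GZ1}, and we have just shown this cone equals $L\Gamma(\B_{\max}(A))$, so $L\Gamma(\epsilon)$ is attained on $\B_{\max}(A)$ as well; hence $L\Gamma(\bint(A))\subset L\Gamma(\B_{\max}(A))$. (Alternatively, and more simply, under the hypothesis $\bint(A)=\B_{\max}(A)$ as sets, so the equality is trivial — but stating it via the cone makes clear the lemma remains the useful statement even when one only has one of the two inclusions handy.) I expect the only real obstacle to be the topological bookkeeping at the tropical boundary ($-\infty$ coordinates): everything is transparent over $\R$, and the content of the lemma is essentially that, once the reverse of Theorem \ref{thm:GZ1}'s inequality is known for a single non-degenerate collection, surjectivity of $w_A$ propagates it to a statement about the full image of $L\Gamma$.
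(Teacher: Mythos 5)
Your proof is correct and follows essentially the same route as the paper's: it uses the definition $\bint(A)=w_A^{-1}\bigl(\overline{\C_2}\cap\{t_0^k=0\}\bigr)$, surjectivity of $w_A$ from non-degeneracy, and the fact that $L\Gamma$ coincides with the linear map $w_A$ on $\B_{\max}(A)\supset\bint(A)$ (with Theorem \ref{thm:GZ1} supplying the reverse inclusion $\B_{\max}(A)\subset\bint(A)$, used implicitly in the paper). The tropical-boundary caveat you flag — that non-degeneracy as stated only gives surjectivity of $w_A$ over $\R$, and one must argue that the $-\infty$ strata of $\overline{\C_2}$ are still hit — is a legitimate subtlety that the paper's one-line proof does not address, so it is a gap in the published argument as much as in yours.
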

  \begin{proof}
  Indeed, we have
  $\bint(A)=w_A^{-1}(\overline{\C_2}\cap\{t_0^k=0|\;k=0,\dots,n\})$,
  and since $\Gamma$ is nondegenerate, we also have
  $w_A(\bint(A))=\overline{\C_2}\cap\{t_0^k=0|\;k=0,\dots,n\}$. On the
  other hand, the restrictions $L\Gamma|\B_{\max}(A)$ and $w_A|\B_{\max}(A)$ coincide, and this,   combined with $\B_{\max}(A)\supset\bint(A)$ implies \eqref{eleq}.
  \end{proof}

  Now we consider the planar network $\Gamma_0$ shown in Figure
  \ref{net:triang}. Given a decreasing sequence of integers $\ba = (a_1,\dots,a_i)$ and an increasing sequence $\bb =
  (b_1,\dots, b_i)$, we will say that a multipath $\alpha\in
  P_i\Gamma_0$ is of type $[\ba,\bb]$ if its sources are given by the
  list $\ba$ and its sinks are given by the list $\bb$. It is
  easy to verify that there is a single multipath in $ P_i\Gamma_0^{(k)}$ of type $[(k,\dots,k-i+1),(1,\dots,i)]$. Denote this multipath by $\alpha(k,i)$ and consider the collection  $A=\{\alpha(k,i),\, 0<i\leq k\leq n\}$ (see Figure \ref{net:alphas}).

\begin{figure}[h]
\begin{tikzpicture}
\draw (-1,0) -- (6.5,0);
\draw (-1,0.5) -- (6.5,0.5);
\draw (-1,1) -- (6.5,1);
\draw (-1,1.5) -- (6.5,1.5);
\draw (-1,2) -- (6.5,2);
\draw (-1,2.5) -- (6.5,2.5);
\draw (0.5,2) -- (1,1.5);
\draw (2,2) -- (2.5,1.5);
\draw (3.5,2) -- (4,1.5);
\draw (5,2) -- (5.5,1.5);
\draw (1.5,1.5) -- (2,1);
\draw (3,1.5) -- (3.5,1);
\draw (4.5,1.5) -- (5,1);
\draw (2.5,1) -- (3,0.5);
\draw (4,1) -- (4.5,0.5);
\draw (3.5,0.5) -- (4,0);
\draw (-0.5,2.5) -- (0,2);
\draw (1,2.5) -- (1.5,2);
\draw (2.5,2.5) -- (3,2);
\draw (4,2.5) -- (4.5,2);
\draw (5.5,2.5) -- (6,2);
\draw[ultra thick,red](-1,1.52) -- (3.02,1.52) -- (3.52,1.02) -- (4.02,1.02) -- (4.52,0.52) -- (6.5,0.52);
\draw[ultra thick,red](-1,1.02) -- (2.52,1.02) -- (3.02,0.52) -- (3.52,0.52) -- (4.02,0.02) -- (6.5,0.02);
\fill[black] (-1,0) circle (0pt) node[left]{1};
\fill[black] (-1,1.5) circle (0pt) node[left]{4};
\fill[black] (-1,2.5) circle (0pt) node[left]{6};
\fill[black] (6.5,0) circle (0pt) node[right]{1};
\fill[black] (6.5,0.5) circle (0pt) node[right]{2};
\fill[black] (6.5,2.5) circle (0pt) node[right]{6};
\end{tikzpicture}
 \caption{$\alpha(4,2)$ for $n=6$, shown in thick red.}\label{net:alphas}
\end{figure}
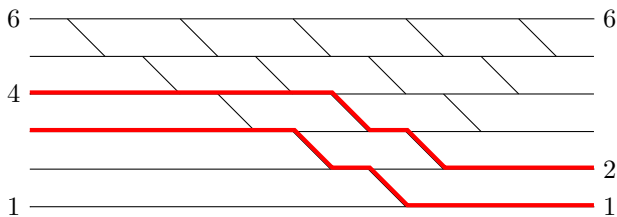

\begin{lem}\label{nondegenerate}The collection $A$ is non-degenerate.
\end{lem}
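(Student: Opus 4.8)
The map $w_A$ restricts to a linear map of real vector spaces $w_A:\R^{E\Gamma_0}\to\R^{VT}\cap\{t_0^k=0\mid k=0,\dots,n\}$, and it is enough to prove that this restriction is surjective. The target has dimension $|VT|-(n+1)=\binom{n+1}{2}$, which is exactly the number of pairs $(k,i)$ with $0<i\le k\le n$; the coordinates of $w_A$ are the functionals $w_{\alpha(k,i)}$, so surjectivity is equivalent to the linear independence of these $\binom{n+1}{2}$ elements of $(\R^{E\Gamma_0})^*$. The plan is to produce $\binom{n+1}{2}$ distinct edges $e_{k,i}\in E\Gamma_0$ (indexed by $0<i\le k\le n$) together with a total order on these indices so that the square matrix with entries $\mathbf 1[\,e_{k,i}\in\alpha(k',i')\,]$ is triangular with $1$'s on the diagonal; this immediately yields the required independence.

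I would choose the edges as follows. For $k=i$, let $e_{i,i}$ be the unique horizontal edge of $\Gamma_0$ incident to the $i$-th sink; since a multipath contains this edge precisely when it reaches the $i$-th sink, we have $e_{i,i}\in\alpha(k',i')$ iff $i\le i'$. For $k>i$, the strand of $\alpha(k,i)$ issuing from the source at level $k$ terminates at the sink at level $i<k$, so it must descend from level $k$ to level $k-1$ along a well-defined diagonal edge, which we take to be $e_{k,i}$. The combinatorial heart of the argument is the following explicit description of $\alpha(k,i)$, which encodes the staircase layout of the diagonals of $\Gamma_0$: numbering the diagonals joining level $m$ to level $m-1$ by $1,\dots,m-1$ from left to right, the strand of $\alpha(k,i)$ running from the source at level $k-i+s$ to the sink at level $s$ (for $1\le s\le i$) uses exactly the $s$-th diagonal of each family $m\to m-1$ with $s<m\le s+(k-i)$, and no other diagonals. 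Granting this, $e_{k,i}$ is the $i$-th diagonal of the family $k\to k-1$; the assignment $(k,i)\mapsto e_{k,i}$ is injective; $e_{k,i}\in\alpha(k,i)$ by construction; and $e_{k,i}\in\alpha(k',i')$ holds precisely when $i\le i'$ and $k-i\le k'-i'$.

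Now order the indices by setting $(k,i)<(k',i')$ if $i<i'$, or if $i=i'$ and $k<k'$. By the preceding paragraph, every diagonal entry of the matrix equals $1$. If $e_{k,i}\in\alpha(k',i')$ with $(k,i)\ne(k',i')$, then $i\le i'$ in all cases; and if $i=i'$ then also $k\le k'$, which follows from $k-i\le k'-i'$ when $k>i$ and from $i=i'\le k'$ when $k=i$. Hence every nonzero entry sits on or below the diagonal, so the matrix is lower-triangular with unit diagonal and is therefore invertible; consequently $w_A$ is surjective.

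What remains is the explicit description of $\alpha(k,i)$, and I expect this to be the one genuinely technical step. The idea is to track forced descents: a strand obliged to drop from level $m$ to level $m-1$ can only use one of the $m-1$ diagonals of that family, and the left-to-right offsets between consecutive families, together with the disjointness and non-crossing conditions imposed by the strands below, single out which one; an induction over the strands from the bottom one upward (equivalently, over how many levels a strand must descend) then produces the claimed diagonal set, simultaneously confirming the uniqueness of $\alpha(k,i)$. Alternatively, one can run an induction on $n$: all $\alpha(k,i)$ with $k<n$ lie in $\Gamma_0^{(n-1)}$, which after deleting the level-$n$ line and the resulting unreachable vertices is $\Gamma_0[n-1]$, so the bottom $n-1$ rows of the tableau are covered by the inductive hypothesis; the level-$n$ horizontal edges and the diagonals from level $n$ to level $n-1$ appear in none of these multipaths, and a short direct check shows their contributions to $w_{\alpha(n,1)},\dots,w_{\alpha(n,n)}$ are linearly independent, which upgrades surjectivity to the full tableau.
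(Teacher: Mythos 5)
Your proof is correct and takes essentially the same route as the paper: you restrict to the $n$ horizontal edges at the sinks together with the $\binom{n}{2}$ diagonal edges, and show the resulting incidence (Jacobian) matrix is triangular with unit diagonal. The paper states the same two derivative facts ($\partial w_{\alpha(k,i)}/\partial h_k$ and $\partial w_{\alpha(k,i)}/\partial a_{r,s}$) and invokes ``an appropriate ordering''; you simply make the ordering and the incidence criterion $e_{k,i}\in\alpha(k',i')\Leftrightarrow i\le i'$ and $k-i\le k'-i'$ fully explicit.
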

\begin{proof}
As shown in Figure \ref{net:edges}, we introduce the following notation for the weights of the edges of $\Gamma$:
\begin{itemize}
\item we denote by $h_i$ the weights of the horizontal edges adjacent to the sinks,
\item we denote by $a_{ij}$ the weights of the slanted edges,
\item the weights of the rest of the edges we put equal to 0.
\end{itemize}

\begin{figure}[h]
\begin{tikzpicture}
\draw (-1,0) -- (7.5,0);
\draw (-1,0.5) -- (7.5,0.5);
\draw (-1,1) -- (7.5,1);
\draw (-1,1.5) -- (7.5,1.5);
\draw (-1,2) -- (7.5,2);
\draw (0.5,2) -- (1,1.5);
\draw (2,2) -- (2.5,1.5);
\draw (3.5,2) -- (4,1.5);
\draw (5,2) -- (5.5,1.5);
\draw (1.5,1.5) -- (2,1);
\draw (3,1.5) -- (3.5,1);
\draw (4.5,1.5) -- (5,1);
\draw (2.5,1) -- (3,0.5);
\draw (4,1) -- (4.5,0.5);
\draw (3.5,0.5) -- (4,0);
\fill[black] (-1,0) circle (0pt) node[left]{1};
\fill[black] (-1,0.5) circle (0pt) node[left]{2};
\fill[black] (-1,1) circle (0pt);
\fill[black] (-1,1.5) circle (0pt);
\fill[black] (-1,2) circle (0pt) node[left]{n};
\fill[black] (7.5,0) circle (0pt) node[right]{1};
\fill[black] (7.5,0.5) circle (0pt) node[right]{2};
\fill[black] (7.5,1) circle (0pt);
\fill[black] (7.5,1.5) circle (0pt);
\fill[black] (7.5,2) circle (0pt) node[right]{n};
\draw[font=\small] (5.5,0.15) node{$h_1$};
\draw[font=\small] (5.7,0.65) node{$h_2$};
\draw[font=\small] (6.4,2.15) node{$h_n$};
\draw[font=\small] (4.2,0.2) node{$a_{1,1}$};
\draw[font=\small] (3.2,0.7) node{$a_{2,1}$};
\draw[font=\small] (4.7,0.7) node{$a_{2,2}$};
\draw[font=\small] (1.3,1.7) node{$a_{n-1,1}$};
\draw[font=\small] (4.3,1.7) node{$a_{n-1,2}$};
\draw[font=\small] (6,1.7) node{$a_{n-1,n-1}$};
\end{tikzpicture}
 \caption{Weighting of the edges of $\Gamma_0$.}\label{net:edges}
\end{figure}
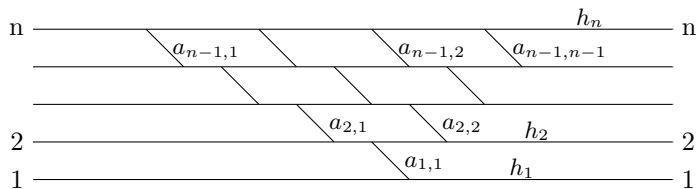

Then we can regard the  map $w_A$  as a linear map from $\T^{\frac{n(n+1)}{2}}$ to itself, and in order to prove its surjectivity it is sufficient  to show that the Jacobian of $w_A$ is nonzero.

We have $\dfrac{\partial w_{\alpha(k,i)}}{\partial h_k}=0$ for $i<k$, since these multipaths $\alpha(k,i)$ do not contain the edge weighted $h_k$ and $\dfrac{\partial w_{\alpha(k,k)}}{\partial h_k}=1$. Similarly, the first multipath of our collection $A$ containing the edge with weight $a_{r,s}$ is the multipath $\alpha(r+1,s)$, therefore we have $\dfrac{\partial w_{\alpha(k,i)}}{\partial a_{k-1,i}}=1$ and $\dfrac{\partial w_{\alpha(k,i)}}{\partial a_{r,s}}=0$ for $k\leq r$. Thus, with an appropriate ordering of the variables $h_i$ and $a_{i,j}$, the Jacobi matrix of $w_A$ is triangular with 1's on the diagonal.
\end{proof}

We prove Theorem \ref{thm:GZ1} in two steps:
\begin{itemize}
\item we begin with a graphical description of the interlacing
  inequalities describing $\bint(A)$ in terms of cells in the
  complement of $\Gamma_0$,
\item then, again, using a graphical device, we show that should these
  inequalities hold, our collection $A$ will consist of maximal multipaths.
\end{itemize}
Thus, the theorem will be proved if we show that the conditions of Lemma \ref{tautology} hold for our collection $A$. 

\subsubsection{Graphical representation of the interlacing inequalities}
We enumerate the {\em cells} formed by the connected components of the
complement of $\Gamma_0$ by the symbols $[k,i]$, $k=0,\dots, n-1$,
$i=0,\dots,k$ as shown in Figure \ref{net:triang-cells}.

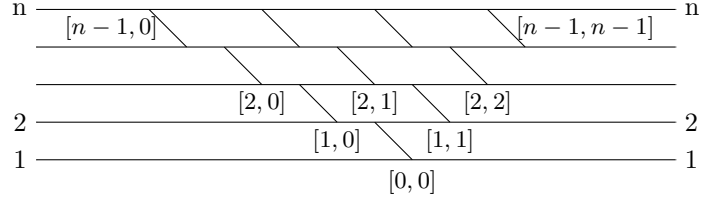
\begin{figure}[h]
\begin{tikzpicture}
\draw (-1,0) -- (7.5,0);
\draw (-1,0.5) -- (7.5,0.5);
\draw (-1,1) -- (7.5,1);
\draw (-1,1.5) -- (7.5,1.5);
\draw (-1,2) -- (7.5,2);
\draw (0.5,2) -- (1,1.5);
\draw (2,2) -- (2.5,1.5);
\draw (3.5,2) -- (4,1.5);
\draw (5,2) -- (5.5,1.5);
\draw (1.5,1.5) -- (2,1);
\draw (3,1.5) -- (3.5,1);
\draw (4.5,1.5) -- (5,1);
\draw (2.5,1) -- (3,0.5);
\draw (4,1) -- (4.5,0.5);
\draw (3.5,0.5) -- (4,0);
\fill[black] (-1,0) circle (0pt) node[left]{1};
\fill[black] (-1,0.5) circle (0pt) node[left]{2};
\fill[black] (-1,1) circle (0pt);
\fill[black] (-1,1.5) circle (0pt);
\fill[black] (-1,2) circle (0pt) node[left]{n};
\fill[black] (7.5,0) circle (0pt) node[right]{1};
\fill[black] (7.5,0.5) circle (0pt) node[right]{2};
\fill[black] (7.5,1) circle (0pt);
\fill[black] (7.5,1.5) circle (0pt);
\fill[black] (7.5,2) circle (0pt) node[right]{n};
\path (4,-0.3) node [font=\small] {$[0,0]$};
\path (3,0.25) node [font=\small] {$[1,0]$};
\path (2,0.75) node [font=\small] {$[2,0]$};
\path (0,1.75) node [font=\small] {$[n-1,0]$};
\path (4.5,0.25) node [font=\small] {$[1,1]$};
\path (3.5,0.75) node [font=\small] {$[2,1]$};
\path (5,0.75) node [font=\small] {$[2,2]$};
\path (6.3,1.75) node [font=\small] {$[n-1,n-1]$};
\end{tikzpicture}
 \caption{Enumeration of the cells of $\Gamma_0$.}\label{net:triang-cells}
\end{figure}

To the cell $[k,i]$, we associate the functional
$c_{[k,i]}:\T^{E\Gamma_0}\to\T$ that is the sum of the signed weights along
the clockwise oriented boundary of the cell, where the sign depends
on  whether the orientation of the boundary coincides with the
orientation of the edge or not (Figure \ref{cell}). The same applies to the unbounded cells.

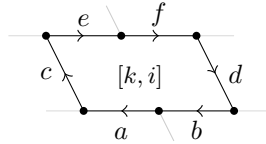
\begin{figure}[h]
\begin{tikzpicture}
\fill[black] (0,0) circle (1.5pt);
\fill[black] (1,0) circle (1.5pt);
\fill[black] (2,0) circle (1.5pt);
\fill[black] (1.5,1) circle (1.5pt);
\fill[black] (0.5,1) circle (1.5pt);
\fill[black] (-0.5,1) circle (1.5pt);
\tikzset{->-/.style={decoration={markings, mark=at position .5 with {\arrow[scale=1.3]{>}}},postaction={decorate}}}
\draw[->-] (1,0) -- (0,0);
\draw[->-] (2,0) -- (1,0);
\draw[->-] (1.5,1) -- (2,0);
\draw[->-] (0.5,1) -- (1.5,1);
\draw[->-] (-0.5,1) -- (0.5,1);
\draw[->-] (0,0) -- (-0.5,1);
\draw[opacity=0.15] (0,0) -- (-0.5,0);
\draw[opacity=0.2] (1,0) -- (1.2,-0.4);
\draw[opacity=0.15] (2,0) -- (2.5,0);
\draw[opacity=0.15] (1.5,1) -- (2,1);
\draw[opacity=0.2] (0.5,1) -- (0.3,1.4);
\draw[opacity=0.15] (-0.5,1) -- (-1,1);
\draw (0.5,-0.1) node[below]{$a$};
\draw (1.5,0) node[below]{$b$};
\draw (1.8,0.5) node[right]{$d$};
\draw (-0.3,0.5) node[left]{$c$};
\draw (0,1) node[above]{$e$};
\draw (1,1) node[above]{$f$};
\draw (0.75,0.45) node[font=\small]{$[k,i]$};
\end{tikzpicture}\caption{$c_{[k,i]}=-w_a-w_b-w_c+w_e+w_f+w_d$.}\label{cell}
\end{figure}

Next, we define the functionals
\[ r^\nearrow_{[k,i]} =c_{[k,i]}+c_{[k-1,i-1]}+\dots+c_{[k-i,0]}
\]
and
\[r^\searrow_{[k,i]} =c_{[k,i]}+c_{[k-1,i]}+\dots+c_{[i,i]},
\]
which correspond to the boundary of the shaded regions shown in
Figures \ref{net:regions-up} and \ref{net:regions-down} below.

\begin{figure}[h]
\begin{tikzpicture}
\draw (-1,3) -- (6.5,3);
\draw (3,2.75) node{$\dots\dots$};
\draw (-1,0) -- (6.5,0);
\draw (-1,0.5) -- (6.5,0.5);
\draw (-1,1) -- (6.5,1);
\draw (-1,1.5) -- (6.5,1.5);
\draw (-1,2) -- (6.5,2);
\draw (-1,2.5) -- (6.5,2.5);
\draw (0.5,2) -- (1,1.5);
\draw (2,2) -- (2.5,1.5);
\draw (3.5,2) -- (4,1.5);
\draw (5,2) -- (5.5,1.5);
\draw (1.5,1.5) -- (2,1);
\draw (3,1.5) -- (3.5,1);
\draw (4.5,1.5) -- (5,1);
\draw (2.5,1) -- (3,0.5);
\draw (4,1) -- (4.5,0.5);
\draw (2.8,0.25) node{$\dots\dots$};
\draw (-0.5,2.5) -- (0,2);
\draw (1,2.5) -- (1.5,2);
\draw (2.5,2.5) -- (3,2);
\draw (4,2.5) -- (4.5,2);
\draw (5.5,2.5) -- (6,2);
\draw[fill=gray,fill opacity=0.15, line width=0.01] (-1,0.5) -- (3,0.5) -- (2.5,1) -- (-1,1) -- cycle;
\draw[fill=gray,fill opacity=0.15, line width=0.01] (2,1) -- (3.5,1) -- (3,1.5) -- (1.5,1.5) -- cycle;
\draw[fill=gray,fill opacity=0.15, line width=0.01] (2.5,1.5) -- (4,1.5) -- (3.5,2) -- (2,2) -- cycle;
\draw[fill=gray,fill opacity=0.15, line width=0.01] (3,2) -- (4.5,2) -- (4,2.5) -- (2.5,2.5) -- cycle;
\fill[black] (-1,0) circle (0pt) node[left]{1};
\fill[black] (-1,0.5) circle (0pt) node[left]{$k-i$};
\fill[black] (-1,1) circle (0pt) node[left]{};
\fill[black] (-1,3) circle (0pt) node[left]{$n$};
\fill[black] (-1,2.5) circle (0pt) node[left]{$k+1$};
\fill[black] (6.5,0) circle (0pt) node[right]{1};
\fill[black] (6.5,0.5) circle (0pt) node[right]{$k-i$};
\fill[black] (6.5,1) circle (0pt) node[right]{};
\fill[black] (6.5,3) circle (0pt) node[right]{$n$};
\fill[black] (6.5,2.5) circle (0pt) node[right]{$k+1$};
\path (3.5,2.25) node [font=\small] {$[k,i]$};
\path (2,0.75) node [font=\small] {$[k-i,0]$};
\end{tikzpicture}
 \caption{$r^\nearrow_{[k,i]}$.}\label{net:regions-up}
\end{figure}

\begin{figure}[h]
\begin{tikzpicture}
\draw (-1,3) -- (6.5,3);
\draw (2.8,2.75) node{$\dots\dots$};
\draw (-1,0) -- (6.5,0);
\draw (-1,0.5) -- (6.5,0.5);
\draw (-1,1) -- (6.5,1);
\draw (-1,1.5) -- (6.5,1.5);
\draw (-1,2) -- (6.5,2);
\draw (-1,2.5) -- (6.5,2.5);
\draw (0.5,2) -- (1,1.5);
\draw (2,2) -- (2.5,1.5);
\draw (3.5,2) -- (4,1.5);
\draw (5,2) -- (5.5,1.5);
\draw (1.5,1.5) -- (2,1);
\draw (3,1.5) -- (3.5,1);
\draw (4.5,1.5) -- (5,1);
\draw (2.5,1) -- (3,0.5);
\draw (4,1) -- (4.5,0.5);
\draw (2.8,0.25) node{$\dots\dots$};
\draw (-0.5,2.5) -- (0,2);
\draw (1,2.5) -- (1.5,2);
\draw (2.5,2.5) -- (3,2);
\draw (4,2.5) -- (4.5,2);
\draw (5.5,2.5) -- (6,2);
\draw[fill=gray,fill opacity=0.15, line width=0.01] (5,1) -- (6.5,1) -- (6.5,1.5) -- (4.5,1.5) -- cycle;
\draw[fill=gray,fill opacity=0.15, line width=0.01] (4,1.5) -- (5.5,1.5) -- (5,2) -- (3.5,2) -- cycle;
\draw[fill=gray,fill opacity=0.15, line width=0.01] (3,2) -- (4.5,2) -- (4,2.5) -- (2.5,2.5) -- cycle;
\fill[black] (-1,0) circle (0pt) node[left]{1};
\fill[black] (-1,0.5) circle (0pt) node[left]{};
\fill[black] (-1,1) circle (0pt) node[left]{$i$};
\fill[black] (-1,3) circle (0pt) node[left]{$n$};
\fill[black] (-1,2.5) circle (0pt) node[left]{$k+1$};
\fill[black] (6.5,0) circle (0pt) node[right]{1};
\fill[black] (6.5,0.5) circle (0pt) node[right]{};
\fill[black] (6.5,1) circle (0pt) node[right]{$i$};
\fill[black] (6.5,3) circle (0pt) node[right]{$n$};
\fill[black] (6.5,2.5) circle (0pt) node[right]{$k+1$};
\path (3.5,2.25) node [font=\small] {$[k,i]$};
\path (5.5,1.25) node [font=\small] {$[i,i]$};
\end{tikzpicture}
 \caption{$r^\searrow_{[k,i]}$.}\label{net:regions-down}
\end{figure}

\begin{lem}
  \label{relgraph}
 The polyhedral cone $\bint(A)$ is given by the weightings
 $\epsilon\in\T^{E\Gamma_0}$
satisfying
   \begin{equation} \label{bintineqs}
     \begin{array}{ll}
r^\nearrow_{[k,i]}(\epsilon)\geq  0,\; & 0\leq i <  k<n,
\\
r^\searrow_{[k,i]}(\epsilon)\leq  0, \; & 0< i\leq   k < n.
     \end{array}
   \end{equation}
\end{lem}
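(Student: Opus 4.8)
The plan is to reduce the lemma to two explicit identities between linear functionals on the space of weightings of $\Gamma_0$. By its very definition $\bint(A)=w_A^{-1}(\overline{\C_2}\cap\{t_0^k=0\})$, where $w_A$ records the numbers $t^k_i=w_{\alpha(k,i)}$; since $\alpha(k,0)$ is the empty multipath we automatically have $t^k_0\equiv 0$, so $\bint(A)$ is exactly the locus where the interlacing inequalities \eqref{eq:interlacing} hold with $t^k_i=w_{\alpha(k,i)}(\epsilon)$. These come in two families, and I will match them one-to-one with the two families of \eqref{bintineqs}. The claim is that, as functionals on $\T^{E\Gamma_0}$,
\begin{align*}
r^\searrow_{[k,i]} &= w_{\alpha(k+1,i-1)}+w_{\alpha(k,i)}-w_{\alpha(k+1,i)}-w_{\alpha(k,i-1)},\qquad 0<i\le k<n,\\
r^\nearrow_{[k,i]} &= w_{\alpha(k+1,i+1)}+w_{\alpha(k,i+1)}-w_{\alpha(k+1,i+2)}-w_{\alpha(k,i)},\qquad 0\le i<k<n.
\end{align*}
Granting this, the first inequality of \eqref{eq:interlacing} at $(i,k)$ is precisely $r^\searrow_{[k,i]}\le 0$, and the second at $(i,k)$ is precisely $r^\nearrow_{[k,i-1]}\ge 0$; as $(i,k)$ ranges over $0<i\le k<n$ the pair $(k,i-1)$ ranges bijectively over $0\le i-1<k<n$, so the two families of \eqref{bintineqs} are exactly the two families of \eqref{eq:interlacing}, which is the assertion of the lemma.

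To make sense of these identities over the tropical semifield (which has no subtraction) I will read each of them as an equality of \emph{multisets of edges}. For the left-hand sides this is almost immediate: in the sum $r^\nearrow_{[k,i]}=c_{[k,i]}+c_{[k-1,i-1]}+\dots+c_{[k-i,0]}$ every edge interior to the union of those cells is shared by two of them, and, being traversed in opposite directions by their clockwise boundaries, appears once with a sign agreeing with and once with a sign disagreeing with its network orientation, so it cancels; hence $r^\nearrow_{[k,i]}$ is the signed sum of edge weights over the boundary of the shaded region of Figure \ref{net:regions-up}, and likewise $r^\searrow_{[k,i]}$ over the boundary of the region of Figure \ref{net:regions-down}. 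In particular every coefficient of $r^\nearrow_{[k,i]}$ and of $r^\searrow_{[k,i]}$ lies in $\{-1,0,1\}$, so we may write $r^\searrow_{[k,i]}=\mathbf{1}_{B^+}-\mathbf{1}_{B^-}$ with $B^+,B^-\subset E\Gamma_0$ disjoint, and the condition ``$r^\searrow_{[k,i]}(\epsilon)\le 0$'' reads simply $\sum_{e\in B^+}w_e(\epsilon)\le\sum_{e\in B^-}w_e(\epsilon)$, an honest comparison of two tropical numbers (and similarly for $r^\nearrow$).

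With this encoding, the first displayed identity becomes the edge-multiset equality
\[
B^+\ \sqcup\ \alpha(k+1,i)\ \sqcup\ \alpha(k,i-1)\ =\ B^-\ \sqcup\ \alpha(k+1,i-1)\ \sqcup\ \alpha(k,i),
\]
and the second its analogue; applying $\sum_e w_e$ to both sides then recovers the functional identities. To prove these multiset equalities I will use the explicit combinatorics of $\Gamma_0$: as already noted, for each $k\ge i$ there is a \emph{unique} multipath $\alpha(k,i)\in P_i\Gamma_0^{(k)}$ of type $[(k,\dots,k-i+1),(1,\dots,i)]$, and I will describe its edge set concretely — it is the union of $i$ pairwise disjoint ``staircase'' paths sliding the top $i$ sources down onto the bottom $i$ sinks. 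One then follows the four multipaths $\alpha(k+1,i-1),\alpha(k,i),\alpha(k+1,i),\alpha(k,i-1)$ through the band of $\Gamma_0$ spanned by the staircase region and checks that every edge lying outside the boundary of that region is used equally often on the two sides, while each boundary edge is picked up with exactly the multiplicity prescribed by its sign in $r^\searrow_{[k,i]}$ (and symmetrically for $r^\nearrow$). This bookkeeping — pinning down the edge sets of the $\alpha(k,i)$ inside $\Gamma_0$ and tracking the cancellations — is the one genuinely computational point and the main obstacle; everything else is formal, and once the two functional identities are established the lemma follows as in the first paragraph.
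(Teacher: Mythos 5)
Your overall strategy is sound and is in fact exactly the paper's: rewrite the defining inequalities of $\bint(A)$ with $t^k_i=w_{\alpha(k,i)}$ (using $w_{\alpha(k,0)}=0$), telescope the cell sums $r^\nearrow$, $r^\searrow$ to get a signed-boundary description with $\pm1$ coefficients, and then match each interlacing inequality to one of the inequalities \eqref{bintineqs} via an explicit linear identity among the $w_{\alpha(k,i)}$. However, the two identities you actually assert are \emph{false} -- you have the roles of $r^\nearrow$ and $r^\searrow$ interchanged.

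Take $n=2$ with the paper's weight labels ($h_1,h_2$ on the edges into the sinks, $a_{1,1}$ on the unique slanted edge, everything else $0$). A direct computation gives
\[
w_{\alpha(1,1)}=h_1,\quad w_{\alpha(2,1)}=a_{1,1}+h_1,\quad w_{\alpha(2,2)}=h_1+h_2,
\]
\[
c_{[0,0]}=h_1,\quad c_{[1,0]}=a_{1,1},\quad c_{[1,1]}=h_2-h_1-a_{1,1}.
\]
Your first identity would force
\[
r^\searrow_{[1,1]}\ \overset{?}{=}\ w_{\alpha(2,0)}+w_{\alpha(1,1)}-w_{\alpha(2,1)}-w_{\alpha(1,0)}\ =\ -a_{1,1},
\]
but the definition gives $r^\searrow_{[1,1]}=c_{[1,1]}=h_2-h_1-a_{1,1}$, which differs by $h_2-h_1$. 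The correct matching runs the other way: the \emph{first} interlacing inequality at $(i,k)$ is the condition $r^\nearrow_{[k,i-1]}\geq 0$ (here: $r^\nearrow_{[1,0]}=a_{1,1}\geq 0$), and the \emph{second} is $r^\searrow_{[k,i]}\leq 0$ (here: $r^\searrow_{[1,1]}=h_2-h_1-a_{1,1}\leq 0$, i.e.\ $a_{1,1}+h_1-h_2\geq 0$). In other words, your claimed $r^\searrow_{[k,i]}$ actually equals $-r^\nearrow_{[k,i-1]}$, and your claimed $r^\nearrow_{[k,i]}$ actually equals $-r^\searrow_{[k,i+1]}$.

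Since you explicitly defer the ``bookkeeping'' that would establish the multiset identities, this is not a mere typo you would silently catch: the proof as written asserts two false functional identities, and the argument collapses at precisely the step you flag as the main content. The repair is mechanical (swap the $\nearrow$/$\searrow$ assignments and adjust the index shift accordingly), but as it stands the bijection between the two families of inequalities is wrong. Everything preceding that point (the reduction to identities, the observation that interior edges cancel and $r^\nearrow$, $r^\searrow$ are boundary sums with coefficients in $\{-1,0,1\}$, the multiset reformulation to sidestep the lack of tropical subtraction) is correct and matches the paper's proof, which carries out the cell-by-cell cancellation graphically for the $r^\nearrow$ family.
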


\begin{proof}
By  Definition \ref{defc2} that $\bint(A)$ is defined by the inequalities
 \[        w_{\alpha(k+1,i)}+
w_{\alpha(k,i-1)}-w_{\alpha(k+1,i-1)}-w_{\alpha(k,i)}\geq 0
\]
and
\[       w_{\alpha(k+1,i+1)}+w_{\alpha(k,i-1)}- w_{\alpha(k+1,i)}-w_{\alpha(k,i)}\leq0,
\]
for $0<i\leq k<n$.

It is not difficult to identify these inequalities with  inequalities
\eqref{bintineqs}.  In Figures \ref{net:regterm1} and \ref{net:regterm2}, we present a graphical proof of this equivalence for the case of the first inequality in \eqref{bintineqs} when
 $k+1=n=5$ and $i=2$:
\begin{itemize}
\item the thick (red) lines show $\alpha(k+1,i)$ and $\alpha(k,i)$,
\item the dashed lines show $\alpha(k+1,i-1)$ and $\alpha(k,i-1)$,
\item the sum of weights along the boundary of the shaded area
  represents the difference in the caption.
\end{itemize}
One then notes that the difference of the two shaded regions is the
union of the cells  $[k,i],\,\dots,\,[k-i,0]$, which, in view of the
definition of $r^\nearrow_{[k,i]}$, completes the proof. The proof of
the equivalence of the other pair of inequalities is analogous.
\begin{figure}[h]
\begin{minipage}{0.48\linewidth}
\begin{tikzpicture}[scale=0.8]
\draw (0,0) -- (6,0);
\draw (0,0.5) -- (6,0.5);
\draw (0,1) -- (6,1);
\draw (0,1.5) -- (6,1.5);
\draw (0,2) -- (6,2);
\draw (0.5,2) -- (1,1.5);
\draw (2,2) -- (2.5,1.5);
\draw (3.5,2) -- (4,1.5);
\draw (5,2) -- (5.5,1.5);
\draw (1.5,1.5) -- (2,1);
\draw (3,1.5) -- (3.5,1);
\draw (4.5,1.5) -- (5,1);
\draw (2.5,1) -- (3,0.5);
\draw (4,1) -- (4.5,0.5);
\draw (3.5,0.5) -- (4,0);
\draw[very thick,red] (0,2) -- (2,2) -- (2.5,1.5) -- (3,1.5) -- (3.5,1) -- (4,1) -- (4.5,0.5) -- (6,0.5);
\draw[very thick,red] (0,1.5) -- (1.5,1.5) -- (2,1) -- (2.5,1) -- (3,0.5) -- (3.5,0.5) -- (4,0) -- (6,0);
\draw[very thick,dashed,blue] (0,1.97) -- (0.47,1.97) -- (0.97,1.47) -- (1.47,1.47) -- (1.97,0.97) -- (2.47,0.97) -- (2.97,0.47) -- (3.47,0.47) -- (3.97,-0.03) -- (6,-0.03);
\draw[fill=gray,fill opacity=0.1, line width=0.01] (0,1.5) -- (1.5,1.5) -- (2,1) -- (2.5,1) -- (3,0.5) -- (3.5,0.5) -- (4,0) -- (6,0) -- (6,-0.5) -- (0,-0.5) -- cycle;
\draw[fill=gray,fill opacity=0.1, line width=0.01] (0.5,2) -- (2,2) -- (2.5,1.5) -- (3,1.5) -- (3.5,1) -- (4,1) -- (4.5,0.5) -- (6,0.5)  -- (6,0) -- (4,0) -- (3.5,0.5) --  (3,0.5) -- (2.5,1) -- (2,1) -- (1.5,1.5) -- (1,1.5)  -- cycle;
\fill[black] (0,0) circle (0pt) node[left, font=\tiny]{1};
\fill[black] (0,0.5) circle (0pt) node[left, font=\tiny]{2};
\fill[black] (0,1) circle (0pt) node[left, font=\tiny]{3};
\fill[black] (0,1.5) circle (0pt) node[left, font=\tiny]{4};
\fill[black] (0,2) circle (0pt) node[left, font=\tiny]{5};
\fill[black] (6,0) circle (0pt) node[right, font=\tiny]{1};
\fill[black] (6,0.5) circle (0pt) node[right, font=\tiny]{2};
\fill[black] (6,1) circle (0pt) node[right, font=\tiny]{3};
\fill[black] (6,1.5) circle (0pt) node[right, font=\tiny]{4};
\fill[black] (6,2) circle (0pt) node[right, font=\tiny]{5};
\end{tikzpicture}
 \caption{\small$w_{\alpha(k+1,i)}-w_{\alpha(k+1,i-1)}$.}\label{net:regterm1}
\end{minipage}
\begin{minipage}{0.48\linewidth}
\begin{tikzpicture}[scale=0.8]
\draw (0,0) -- (6,0);
\draw (0,0.5) -- (6,0.5);
\draw (0,1) -- (6,1);
\draw (0,1.5) -- (6,1.5);
\draw (0,2) -- (6,2);
\draw (0.5,2) -- (1,1.5);
\draw (2,2) -- (2.5,1.5);
\draw (3.5,2) -- (4,1.5);
\draw (5,2) -- (5.5,1.5);
\draw (1.5,1.5) -- (2,1);
\draw (3,1.5) -- (3.5,1);
\draw (4.5,1.5) -- (5,1);
\draw (2.5,1) -- (3,0.5);
\draw (4,1) -- (4.5,0.5);
\draw (3.5,0.5) -- (4,0);
\draw[very thick,red] (0,1.5) -- (3,1.5) -- (3.5,1) -- (4,1) -- (4.5,0.5) -- (6,0.5);
\draw[very thick,red] (0,1) -- (2.5,1) -- (3,0.5) -- (3.5,0.5) -- (4,0) -- (6,0);
\draw[dashed,blue,very thick] (0,1.47) -- (1.47,1.47) -- (1.97,0.97) -- (2.47,0.97) -- (2.97,0.47) -- (3.47,0.47) -- (3.97,-0.03) -- (6,-0.03);
\draw[fill=gray,fill opacity=0.1, line width=0.01] (0,1) -- (2,1) -- (2.5,1) -- (3,0.5) -- (3.5,0.5) -- (4,0) -- (6,0) -- (6,-0.5) -- (0,-0.5) -- cycle;
\draw[fill=gray,fill opacity=0.1, line width=0.01] (1.5,1.5) -- (2.5,1.5) -- (3,1.5) -- (3.5,1) -- (4,1) -- (4.5,0.5) -- (6,0.5)  -- (6,0) -- (4,0) -- (3.5,0.5) --  (3,0.5) -- (2.5,1) -- (2,1)  -- cycle;
\fill[black] (0,0) circle (0pt) node[left, font=\tiny]{1};
\fill[black] (0,0.5) circle (0pt) node[left, font=\tiny]{2};
\fill[black] (0,1) circle (0pt) node[left, font=\tiny]{3};
\fill[black] (0,1.5) circle (0pt) node[left, font=\tiny]{4};
\fill[black] (0,2) circle (0pt) node[left, font=\tiny]{5};
\fill[black] (6,0) circle (0pt) node[right, font=\tiny]{1};
\fill[black] (6,0.5) circle (0pt) node[right, font=\tiny]{2};
\fill[black] (6,1) circle (0pt) node[right, font=\tiny]{3};
\fill[black] (6,1.5) circle (0pt) node[right, font=\tiny]{4};
\fill[black] (6,2) circle (0pt) node[right, font=\tiny]{5};
\end{tikzpicture}
 \caption{\small$w_{\alpha(k,i)}-w_{\alpha(k,i-1)}$.}\label{net:regterm2}
\end{minipage}
\end{figure}
\end{proof}

\subsubsection{Maximality of the collection $A$}
Now we are ready to prove Theorem \ref{thm:GZ2}.
We will call $\beta\in P_i\Gamma_0$ a {\em maximal multipath} for the
weighting $\epsilon$ if $l_i\Gamma_0=w_\beta(\epsilon)$.
By Lemmas
\ref{tautology} and \ref{relgraph}, we may assume that we are given a
weighting $\epsilon$ of $\Gamma_0$ satisfying
$r^\nearrow_{[k,i]}(\epsilon)\geq0$ and
$r^\searrow_{[k,i]}(\epsilon)\leq0$. We must prove that for
this weighting, the paths of the collection $A$ are maximal. We fix this $\epsilon$ for the rest of the section and will often drop it from our notation  for brevity.

For any $k$-tuple $\ba=(a_1,\dots,a_k)$, we will denote the sum $a_1+\dots+a_k$ by $\sum\ba$.

\begin{lem}\label{sinks}
  Let $\beta\in P_i\Gamma_0$ be a maximal multipath of type $[\ba,\bb]$ for
  the weighting $\epsilon\in\bint$ such that the value of
  $\sum\bb$ is minimal among all maximal multipaths
  in $P_i\Gamma_0$. Then $\bb=(1,2,\dots,i)$.
\end{lem}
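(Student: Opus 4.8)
The statement asserts that among all maximal $i$-multipaths in $\Gamma_0$, one whose sink-sum is minimal must in fact terminate at the lowest $i$ sinks $(1,2,\dots,i)$. I would argue by contradiction, following the ``graphical device'' philosophy already used for Lemma~\ref{relgraph}: suppose $\beta$ is maximal of type $[\ba,\bb]$ with $\sum\bb$ minimal, but $\bb\neq(1,\dots,i)$. Then there is a sink index $m$ with $1\le m\le i$ that is not among the $\bb$, while some $b_j > $ (the number of $\bb$-entries $\le m$), so $\beta$ ``overshoots'' at the bottom. The idea is to produce a new $i$-multipath $\beta'$ of the same weight with strictly smaller sink-sum, contradicting minimality; this $\beta'$ is obtained by locally rerouting $\beta$ downward near the right edge, and the weight is unchanged precisely because of the interlacing inequalities $r^\searrow_{[k,i]}(\epsilon)\le 0$.

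\textbf{Key steps.} First I would reduce to a cleaner combinatorial picture: since $\Gamma_0$ has the triangular shape of Figure~\ref{net:triang}, a multipath in $P_i\Gamma_0$ is essentially determined by which slanted edges it uses, and the ``type'' $[\ba,\bb]$ is read off from the leftmost (source) and rightmost (sink) positions of its constituent paths. Second, I would show that if $\bb$ contains an entry $b_j$ with $b_j > j'$ where $j'$ counts how many sinks below $b_j$ are skipped — equivalently, if the bottom path of $\beta$ arrives at a sink strictly higher than $1$, or more generally if $\bb \neq (1,\dots,i)$ — then one can ``slide'' the offending path(s) down one sink-level using a horizontal edge near the right boundary together with one unused slanted edge, keeping disjointness. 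The resulting $\beta' \in P_i\Gamma_0$ satisfies $w_{\beta'}(\epsilon) = w_\beta(\epsilon) + r^\searrow_{[k,i]}(\epsilon)$ for the appropriate $[k,i]$ indexing the cells swept during the slide (this is exactly the region pictured in Figure~\ref{net:regions-down}); hence $w_{\beta'}(\epsilon) \ge w_\beta(\epsilon)$, so $\beta'$ is also maximal, but $\sum \bb' < \sum \bb$. Third, iterate (or take the minimal counterexample directly) to conclude $\bb = (1,2,\dots,i)$.

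\textbf{Main obstacle.} The delicate point is the bookkeeping that identifies $w_{\beta'}(\epsilon) - w_\beta(\epsilon)$ with a genuine $r^\searrow$-functional — i.e., verifying that the ``symmetric difference'' of the edge-sets of $\beta$ and $\beta'$ is exactly a staircase union of cells $[k,i],[k-1,i],\dots,[i,i]$ as in the definition of $r^\searrow_{[k,i]}$, with the correct signs. One has to be careful that the reroute can always be performed \emph{locally} (one cell-column at a time) while preserving the $(1,1)$-degree condition at internal vertices and the disjointness of the $i$ paths; the triangular structure of $\Gamma_0$, where every internal vertex has exactly one slanted and two horizontal incidences, is what makes this possible, and I would isolate it as a small combinatorial sublemma. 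A secondary technical nuisance is handling the tropical $-\infty$ weights, but since $\epsilon \in \bint \subset \T^{E\Gamma_0}$ and we only compare maxima, the argument goes through verbatim with $+$ replaced by $\max$ where needed; alternatively one reduces to the case $\epsilon \in \R^{E\Gamma_0}$ by density and continuity of the piecewise-linear functionals involved.
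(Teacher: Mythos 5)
Your plan matches the paper's: argue by contradiction and slide the offending sink down one level, identifying the weight change with the functional $r^\searrow$, which is $\leq 0$ under the interlacing hypothesis. Note a sign slip, though: you wrote $w_{\beta'}(\epsilon) = w_\beta(\epsilon) + r^\searrow_{[k,i]}(\epsilon)$ but then concluded $w_{\beta'} \geq w_\beta$, which, since $r^\searrow \leq 0$, requires the opposite sign; the correct identity is $w_{\beta'} = w_\beta - r^\searrow_{[k,b-1]}$ (the paper's running text has this right, while the caption of Figure~\ref{net:sinks-down} shares your typo). The paper is also more explicit about the iteration you only gesture at: it first fixes $b_1=1$ by the one-path argument applied to the lowest constituent path, then $b_2=2$, and so on, and this bottom-up ordering is exactly what guarantees each slide stays disjoint from the already-normalized lower paths, the disjointness concern you correctly identified as the main obstacle.
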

\begin{proof}
  First consider the case $i=1$. If $\beta$ ends at $b>1$, then there
  is a cell $[b-1,b-1]$ whose upper edge is a part of $\beta$. Then
  $w_{\beta}\leq w_{\tilde\beta}=w_{\beta}-r^\searrow_{[k,b-1]}$ for some $k$,
  where $\tilde\beta$ is a path ending at $b-1$ (see Figure
  \ref{net:sinks-down}). This contradicts the assumption on $\beta$,
 and so the $i=1$ case is proved.

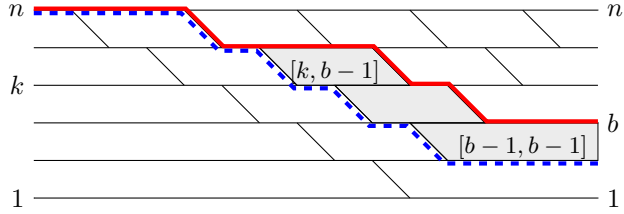
\begin{figure}[h]
\begin{tikzpicture}
\draw (-1,0) -- (6.5,0);
\draw (-1,0.5) -- (6.5,0.5);
\draw (-1,1) -- (6.5,1);
\draw (-1,1.5) -- (6.5,1.5);
\draw (-1,2) -- (6.5,2);
\draw (-1,2.5) -- (6.5,2.5);
\draw (0.5,2) -- (1,1.5);
\draw (2,2) -- (2.5,1.5);
\draw (3.5,2) -- (4,1.5);
\draw (5,2) -- (5.5,1.5);
\draw (1.5,1.5) -- (2,1);
\draw (3,1.5) -- (3.5,1);
\draw (4.5,1.5) -- (5,1);
\draw (2.5,1) -- (3,0.5);
\draw (4,1) -- (4.5,0.5);
\draw (3.5,0.5) -- (4,0);
\draw (-0.5,2.5) -- (0,2);
\draw (1,2.5) -- (1.5,2);
\draw (2.5,2.5) -- (3,2);
\draw (4,2.5) -- (4.5,2);
\draw (5.5,2.5) -- (6,2);
\draw[fill=gray,fill opacity=0.15, line width=0.01] (4.5,0.5) -- (6.5,0.5) -- (6.5,1) -- (4,1) -- cycle;
\draw[fill=gray,fill opacity=0.15, line width=0.01] (3.5,1) -- (5,1) -- (4.5,1.5) -- (3,1.5) -- cycle;
\draw[fill=gray,fill opacity=0.15, line width=0.01] (2.5,1.5) -- (4,1.5) -- (3.5,2) -- (2,2) -- cycle;
\fill[black] (-1,0) circle (0pt) node[left]{1};
\fill[black] (-1,2.5) circle (0pt) node[left]{$n$};
\fill[black] (-1,1.5) circle (0pt) node[left]{$k$};
\fill[black] (6.5,0) circle (0pt) node[right]{1};
\fill[black] (6.5,1) circle (0pt) node[right]{$b$};
\fill[black] (6.5,2.5) circle (0pt) node[right]{$n$};
\draw(5.5,0.4) node[above,font=\small]{$[b-1,b-1]$};
\draw(3,1.4) node[above,font=\small]{$[k,b-1]$};
\draw[ultra thick,red] (-1,2.52) -- (1.02,2.52) -- (1.52,2.02) -- (3.52,2.02) -- (4.02,1.52) -- (4.52,1.52) -- (5.02,1.02) -- (6.5,1.02);
\draw[ultra thick,dashed,blue] (-1,2.46) -- (0.96,2.46) -- (1.46,1.96) -- (1.96,1.96) -- (2.46,1.46) -- (2.96,1.46) -- (3.46,0.96) -- (3.96,0.96) -- (4.46,0.46) -- (6.5,0.46);
\end{tikzpicture}
 \caption{$w_{\beta}$ (solid) $=$ $w_{\tilde\beta}$ (dashed) $-$ $r^\searrow_{[k,b-1]}$ (shaded).}\label{net:sinks-down}
\end{figure}

Now let $\beta\in P_i\Gamma_0$ be a maximal multipath for $\epsilon$
with the smallest possible value of $b_1+\dots+b_i$. Clearly, we can apply the
above argument to the lowest path in $\beta$ and conclude that
$b_1=1$. If $b_2>2$, then, as before, we can subtract the appropriate
$r^\searrow_{[k,b_2-1]}$ from $w_\beta$, and obtain a multipath with a strictly
lower sum of end-values. This contradicts the assumption on
$\beta$, hence $b_2=2$. Repeating this argument, we can show that
$b_j=j$ for $j=1,\dots,i$, which completes the proof.
\end{proof}

Now we consider the sources of maximal multipaths.

\begin{lem}\label{sources}
  Let $\beta\in P_i\Gamma_0$ be a maximal multipath of type $[\ba,(1,2,\dots,i)]$ for
  a weighting $\epsilon\in\bint$ for which the value of
  $\sum\ba=a_1+\dots+a_i$ is the largest among all the maximal multipaths
  in $P_i\Gamma_0$. Then $\ba=(n,n-1,\dots,n-i+1)$.
\end{lem}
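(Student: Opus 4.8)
The plan is to mirror the proof of Lemma~\ref{sinks}, this time working from the source side of $\Gamma_0$ and using the inequalities $r^\nearrow_{[k,i]}(\epsilon)\geq 0$ in place of $r^\searrow_{[k,i]}(\epsilon)\leq 0$. It suffices to show that whenever $\ba\neq(n,n-1,\dots,n-i+1)$ there is a multipath $\tilde\beta\in P_i\Gamma_0$ with the same sinks $(1,2,\dots,i)$, with $\sum\tilde\ba=\sum\ba+1$, and with $w_{\tilde\beta}(\epsilon)\geq w_\beta(\epsilon)$; since $\beta$ is maximal and $l_i\Gamma_0(\epsilon)$ is a maximum, $\tilde\beta$ is then maximal too, contradicting the choice of $\beta$.

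First the case $i=1$. If $\beta$ starts at a source $a<n$, let $x_0$ be the abscissa of the leftmost slanted edge of $\Gamma_0$ descending from line $a+1$ to line $a$. The relevant combinatorial feature of $\Gamma_0$ is that the abscissae of the leftmost descents $m\to m-1$ are strictly decreasing in $m$; hence the leftmost descent off line $a$ lies to the right of $x_0$, and $\beta$ — which can only move rightwards and downwards — still runs along line $a$ at $x_0$. Thus $\beta$ traverses the lower boundary of the leftmost cell $[a,0]$ of its row, and rerouting $\beta$ across this cell (replacing the initial segment on line $a$ by the segment on line $a+1$ followed by the descent at $x_0$) produces a path $\tilde\beta$ starting at source $a+1$ with $w_{\tilde\beta}=w_\beta+c_{[a,0]}(\epsilon)=w_\beta+r^\nearrow_{[a,0]}(\epsilon)\geq w_\beta$. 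The constraint $0<a<n$ needed for $r^\nearrow_{[a,0]}\geq 0$ holds since $1\leq a<n$.

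For general $i$ we proceed by induction, pinning the sources from the top. Having established $a_1=n,\dots,a_{j-1}=n-j+2$, suppose $a_j<n-j+1$, so $a_j+1\leq n-j$ is not a source of $\beta$; we reroute the path of $\beta$ issuing from $a_j$ so that it issues from $a_j+1$. If none of the higher paths $P_1,\dots,P_{j-1}$ enters the region between lines $a_j$ and $a_j+1$ to the left of the relevant descent, this is again a single-cell move across $[a_j,0]$, with weight change $r^\nearrow_{[a_j,0]}(\epsilon)\geq 0$. In general, the non-crossing structure of $\beta$ and the monotone layout of the slanted edges show that any obstruction is an initial staircase of one of the higher paths, and shifting that staircase and the $a_j$-path up by one line amounts to rerouting $\beta$ across a diagonal strip of cells $[k,m],[k-1,m-1],\dots,[k-m,0]$, i.e.\ to adding $r^\nearrow_{[k,m]}(\epsilon)\geq 0$ to $w_\beta$; equivalently, one applies the re-sorting map $\dor$ from the proof of Theorem~\ref{thm:GZ1} to the colliding pair of rerouted paths to recover a genuine multipath without changing the total weight. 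Either way we obtain the required $\tilde\beta$, and the contradiction forces $a_j=n-j+1$.

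I expect the last step to be the main obstacle: verifying that the upward reroute is always realisable inside $\Gamma_0$ as a legitimate vertex-disjoint multipath, identifying the exact diagonal strip of cells it sweeps, checking that this strip stays within the range $0\leq m<k<n$ where $r^\nearrow_{[k,m]}\geq 0$ is available, and fixing the signs so that the weight increment is genuinely a nonnegative combination of the $r^\nearrow_{[\cdot,\cdot]}$; a picture analogous to Figure~\ref{net:sinks-down} should make this transparent. Once Lemma~\ref{sinks} and the present lemma are in hand, the maximal multipath in $P_i\Gamma_0$ with smallest sink-sum and then largest source-sum is the unique multipath of type $[(n,n-1,\dots,n-i+1),(1,2,\dots,i)]$, namely $\alpha(n,i)$, so $w_{\alpha(n,i)}(\epsilon)=l_i\Gamma_0(\epsilon)$ for all $\epsilon\in\bint(A)$. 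Applying this to each subnetwork $\Gamma_0^{(k)}\simeq\Gamma_0[k]$ shows that every multipath $\alpha(k,i)$ of the collection $A$ is maximal, i.e.\ $\bint(A)\subset\B_{\max}(A)$, and Lemma~\ref{tautology} then gives Theorem~\ref{thm:GZ2}.
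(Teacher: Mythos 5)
Your approach is the same as the paper's: treat the $i=1$ case by a single-cell reroute across $[a,0]$ (adding $r^\nearrow_{[a,0]}\geq0$), then for general $i$ move a source up by one line by adding a diagonal strip of cells, i.e.\ by invoking $r^\nearrow_{[k,m]}\geq0$. Your $i=1$ argument is correct and, if anything, more explicit than the paper's (the observation about the abscissae of the leftmost descents being monotone is a nice, accurate justification of why the reroute is geometrically available).

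However, the general-$i$ step is exactly where you yourself flag "the main obstacle," and that is a genuine gap rather than a routine check. The paper proves this step via a specific combinatorial lemma that your sketch omits: it shows that if $j$ is a source but $j+1$ is not, then the multipath $\beta$ must have an \emph{elbow} (slanted-in, horizontal-out) at each of the vertices $v_{j+1,1},v_{j+2,2},\dots,v_{j+s,s}$, where $s$ is the first index with $v_{j+s+1,s+1}\notin\beta$; the reroute then adds precisely the region $r^\nearrow_{[j+s-1,s-1]}$, and the new object is again a valid multipath with the source raised from $j$ to $j+1$. Your "any obstruction is an initial staircase of one of the higher paths" is the right picture, but it needs exactly this elbow analysis to pin down the staircase and to guarantee the rerouted paths remain pairwise disjoint; without it the identification of the correct $r^\nearrow_{[k,m]}$ and the disjointness of the resulting paths are unjustified. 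Also, your proposed fallback of applying the re-sorting map $\dor$ "to the colliding pair of rerouted paths" does not directly apply here: $\dor$ in the proof of Theorem~\ref{thm:GZ1} re-sorts an $(i-1)$-path together with an $i$-path into a new such pair, whereas here you would be trying to repair a single multipath whose strands cross after rerouting, which is a different operation and does not by itself yield the needed weight inequality. Finally, note the paper organizes the induction slightly differently (show $a_1=n$, then show the source set has no gaps) rather than pinning from the top, but that is an inessential reformulation.
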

\begin{proof}
  We begin with the case $i=1$. Suppose that a path $\beta\in
  P_1\Gamma_0$ connecting $a<n$ with the sink 1 satisfies the
  conditions of the lemma. Then $w_{\beta}\leq w_{\tilde\beta}=
  w_{\beta}+r^\nearrow_{[a,0]}$  for a
  path $\tilde\beta$ that starts at $a+1$. This contradicts our
  assumptions on $\beta$, and thus $a=n$.

  If $i>1$ and $\beta\in P_i\Gamma_0$ satisfies the conditions of the
  lemma, then we can still show as above that necessarily
  $a_1=n$.  Now assume that there is a gap between the sources of
  $\beta$, i.e. that, for some $j<n$, $j$ is a term of the sequence $\ba$, but $j+1$ is not. If the first vertex $v_{j+1,1}$ of $\Gamma_0$ on the 
  line $y=j+1$ is not in $\beta$, then we can again add
  $r^\nearrow_{[j,0]}$ to $w_\beta$ and obtain a new multipath
  with larger $\sum\ba$.

If $v_{j+1,1}\in\beta$, then $\beta$ {\em has an elbow} at
$v_{j+1,1}$, by which we mean that the two edges in $\beta$ containing
$v_{j+1,1}$ are the downward incoming edge and the horizontal outgoing
edge. The other two edges are excluded by our assumptions. Now consider
the sequence of vertices $v_{j+1,1},v_{j+2,2},\dots,v_{j+s,s}\in\beta$
such that $v_{j+s+1,s+1}\notin\beta$. It is easy to verify that $\beta$ has necessarily an elbow at all of the vertices of this
sequence.
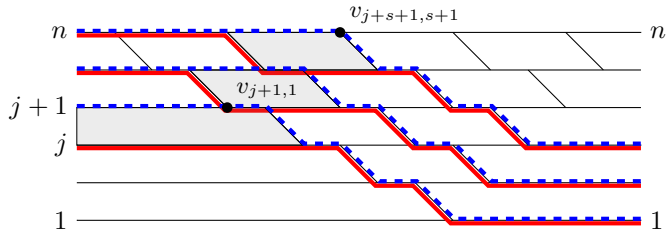
\begin{figure}[h]
\begin{tikzpicture}
\draw (-1,0) -- (6.5,0);
\draw (-1,0.5) -- (6.5,0.5);
\draw (-1,1) -- (6.5,1);
\draw (-1,1.5) -- (6.5,1.5);
\draw (-1,2) -- (6.5,2);
\draw (-1,2.5) -- (6.5,2.5);
\draw (0.5,2) -- (1,1.5);
\draw (2,2) -- (2.5,1.5);
\draw (3.5,2) -- (4,1.5);
\draw (5,2) -- (5.5,1.5);
\draw (1.5,1.5) -- (2,1);
\draw (3,1.5) -- (3.5,1);
\draw (4.5,1.5) -- (5,1);
\draw (2.5,1) -- (3,0.5);
\draw (4,1) -- (4.5,0.5);
\draw (3.5,0.5) -- (4,0);
\draw (-0.5,2.5) -- (0,2);
\draw (1,2.5) -- (1.5,2);
\draw (2.5,2.5) -- (3,2);
\draw (4,2.5) -- (4.5,2);
\draw (5.5,2.5) -- (6,2);
\fill[black] (-1,0) circle (0pt) node[left]{1};
\fill[black] (-1,2.5) circle (0pt) node[left]{$n$};
\fill[black] (-1,1.5) circle (0pt) node[left]{$j+1$};
\fill[black] (-1,1) circle (0pt) node[left]{$j$};
\fill[black] (6.5,0) circle (0pt) node[right]{1};
\fill[black] (6.5,1) circle (0pt) node[right]{};
\fill[black] (6.5,2.5) circle (0pt) node[right]{$n$};
\draw[ultra thick,red] (-1,2.46) -- (0.97,2.46) -- (1.47,1.96) -- (3.47,1.96) -- (3.97,1.46) -- (4.47,1.46) -- (4.97,0.96) -- (6.5,0.96);
\draw[ultra thick,red] (-1,1.96) -- (0.47,1.96) -- (0.97,1.46) -- (2.97,1.46) -- (3.47,0.96) -- (3.97,0.96) -- (4.47,0.46) -- (6.5,0.46);
\draw[ultra thick,red] (-1,0.96) -- (2.47,0.96) -- (2.97,0.46) -- (3.47,0.46) -- (3.97,-0.04) -- (6.5,-0.04);
\draw[fill=gray,fill opacity=0.15, line width=0.01] (-1,1) -- (2,1) -- (1.5,1.5) -- (-1,1.5) -- cycle;
\draw[fill=gray,fill opacity=0.15, line width=0.01] (1,1.5) -- (2.5,1.5) -- (2,2) -- (0.5,2) -- cycle;
\draw[fill=gray,fill opacity=0.15, line width=0.01] (1.5,2) -- (3,2) -- (2.5,2.5) -- (1,2.5) -- cycle;
\draw[dashed,ultra thick,blue] (-1,2.52) -- (2.52,2.52) -- (3.02,2.02) -- (3.52,2.02) -- (4.02,1.52) -- (4.52,1.52) -- (5.02,1.02) -- (6.5,1.02);
\draw[dashed,ultra thick,blue] (-1,2.02) --  (2.02,2.02) -- (2.52,1.52) -- (3.02,1.52) --  (3.52,1.02) -- (4.02,1.02) -- (4.52,0.52) -- (6.5,0.52);
\draw[dashed,ultra thick,blue] (-1,1.52) -- (1.52,1.52) -- (2.02,1.02) -- (2.52,1.02) -- (3.02,0.52) -- (3.52,0.52) -- (4.02,0.02) -- (6.5,0.02);
\fill[black] (1,1.5) circle (2pt) node[above right,font=\small]{$v_{j+1,1}$};
\fill[black] (2.5,2.5) circle (2pt) node[above right,font=\small]{$v_{j+s+1,s+1}$};
\end{tikzpicture}
 \caption{Graphical representation of $w_{\tilde\beta}=
w_{\beta}+r^\nearrow_{[j+s-1,s-1]}$.}\label{net:sources-up}
\end{figure}

This implies that $w_{\beta}\leq w_{\tilde\beta}=
w_{\beta}+r^\nearrow_{[j+s-1,s-1]}$ for a multipath $\tilde\beta$ with
the source at height $j$ moved up to $j+1$.  We give an example of
this operation in Figure \ref{net:sources-up}, where
\begin{itemize}
\item $i=3$ and $s=2$,
\item $\beta$ is marked with thick (red) lines,
\item $\tilde\beta$ is marked with dashed lines,
\item the shaded area is the domain corresponding to $r^\nearrow_{[j+s-1,s-1]}$.
\end{itemize}

This contradicts our
assumption that $\beta$ is a maximal multipath with the largest
$\sum\ba$, thus there can be no gaps among the sources of $\beta$, which completes the proof.

\end{proof}

Now we can quickly finish the proof of Theorem \ref{thm:GZ2}. We must show that
$\im(L{\Gamma_0})=\overline{\C_2}\cap\{t_0^k=0|\;k=0,\dots,n\}$. We consider the non-degenerate collection
$A=\{\alpha(k,i)\}$. According to Lemma \ref{tautology}, it is sufficient to show that
if the weights
$w_{\alpha(k,i)}(\epsilon)$ satisfy the interlacing inequalities for a weighting $\epsilon$ of $\Gamma_0$, then they
are also maximal. This obviously follows from Lemmas \ref{sinks} and \ref{sources}.

\subsection{Proof of Theorem  \ref{thm:Horn1}}

Let $\Gamma$ and $\Delta$  be two planar networks of rank $n$. We must prove the inequalities

\begin{equation}  \label{eq:GD}
\begin{array}{lll}
\mm{k}{i} + \mm{k-1}{i-1} & \geq & \mm{k}{i-1} + \mm{k-1}{i},  \\
\mm{k}{i} + \mm{k-1}{i} & \geq & \mm{k}{i+1} + \mm{k-1}{i-1},\\
\mm{k}{i} + \mm{k}{i-1} & \geq & \mm{k+1}{i} + \mm{k-1}{i-1} .
\end{array}
\end{equation}

We begin with the first inequality. As in the proof
of Theorem \ref{thm:GZ1}, we will show that for $\alpha\in
\pp{k-1}i$ and $\beta\in\pp k{i-1}$, one can find $\tilde \alpha\in
\pp{k-1}{i-1}$ and $\tilde \beta\in\pp ki$ such that
\[
w_\alpha(\epsilon)+w_\beta(\epsilon)=
w_{\tilde \alpha}(\epsilon)+w_{\tilde \beta}(\epsilon), \ \mathrm{for\ any\ weighting}\  \epsilon\in\T^{E(\Gamma\Delta)}.
\]
\emph{We can informally describe this problem as follows. Imagine that we
have a group of tourists with $k$ men and $k-1$ women;
$i-1$ of the men and  $i$ of the women are fit. The group is planning
an excursion where the fit tourists go from one town to a neighboring town and the unfit tourists only go from the first town to a park halfway to the second town. The organizer of the excursion devises a route for each member of the group under the following special condition: the paths of the men
should not intersect, and similarly, the paths of the women should not
intersect. Just before the trip  it turns out that
there will be $i$ fit men, and $i-1$ fit women, with the total numbers
of men and women unchanged. Can the organizer redraw the routes under
the same special condition, and so that if a certain segment was used
only by one person in the original plan, then this segment will be
used by only one person in the new plan as well?}

To prove the theorem it will be convenient to consider a slightly more general notion of a
planar network in which we allow multiple edges (in fact, we will need only double edges). If we have
a double edge emanating from a vertex, then this edge will contribute
2 to the out-degree of this vertex; similarly, this edges will
contribute 2 to the in-degree of the vertex to which it points. We
will also consider planar networks with a different number of sources
and sinks. We will call such a planar network {\em of type
  $[k_1,k_2]$} if the sum of out-degrees of all its sources is $k_1$
and the sum of all in-degrees of its sinks is $k_2$.

The sum of multipaths $\Theta=\alpha\cup\beta$ is naturally such a
generalized planar network of type $[2k-1,2i-1]$ with the edges $e\in
E\alpha\cap E\beta$ having multiplicity 2. We will consider such a
double edge as two separate edges. The end-line of $\Gamma$, which is
also the start-line of $\Delta$, will be called the {\em middle line}
of $\Theta$. According to our strategy, the inequality
\begin{equation}
  \label{eqm}
  \mm{k}{i} + \mm{k-1}{i-1}  \geq  \mm{k}{i-1} + \mm{k-1}{i}
\end{equation}
will follow if we can decompose $\Theta$ as the union of
multipaths. one from $\pp{k-1}{i-1}$ and the other from $\pp ki$. This will be
shown in Proposition \ref{hornprop} below.

More generally, we consider decompositions of $\Theta$ into two
multipaths from $\pp pq$ and $\pp {p'}{q'}$ for some integers
$p,q,p'$ and $q'$. Clearly, we will have $q+q'=2i-1$ and $p+p'=2k-1$. To such a decomposition, we can associate a coloring of the edges of $\Theta$ in two colors, say, red and
green. We will call such a coloring {\em valid}. Let us classify all valid colorings.

 First we note that just as in the proof of Lemma \ref{subnet},
the operation of eliminating a vertex of degree $(1,1)$ and replacing
its two adjacent edges by a single edge will not influence the image of $M\Gamma\Delta$, so we
will assume that $\Theta$ has no vertices of degree $(1,1)$.

Recall that a closed path in an unoriented graph is a connected
subgraph whose every vertex has degree 2; an open path has, in
addition, two vertices of degree 1.
\begin{lem}
  Assume that $\Theta$ has no vertices of degree $(1,1)$ and consider
  the equivalence relation on the edges of $\Theta$ generated by the
  following relation: two edges are related if they are either both
  incoming edges of a vertex of $\Theta$ or both outgoing edges of a
  vertex of $\Theta$.  Then the resulting equivalence classes are
  (possibly closed) unoriented paths in $\Theta$ with edges having alternating orientations.
\end{lem}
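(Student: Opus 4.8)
\emph{Proof proposal.} The plan is to deduce the statement from the elementary fact that a graph all of whose vertices have valence at most $2$ is a disjoint union of (possibly closed) simple paths, applied to a suitable auxiliary graph.

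First I would record the only structural input that is needed: every vertex of $\Theta$ has in-degree and out-degree at most $2$. Indeed, $\Theta=\alpha\cup\beta$ with $\alpha$ and $\beta$ both $\Gamma\Delta$-paths, so by Lemma \ref{lem:charpaths} each of $\alpha$ and $\beta$ contributes at most one incoming and at most one outgoing edge at any vertex; counting the two strands of a double edge separately, we get $d^-(v)\le 2$ and $d^+(v)\le 2$ for every vertex $v$ of $\Theta$. Since the vertices of degree $(1,1)$ have been removed, the surviving types are $(0,1),(1,0),(0,2),(2,0),(1,2),(2,1),(2,2)$.

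Next I would build an auxiliary graph $\Lambda$ by splitting each vertex of $\Theta$: a vertex $v$ is replaced by an \emph{in-node} $v^-$ (present when $d^-(v)\ge 1$), to which the heads of all edges entering $v$ are attached, and an \emph{out-node} $v^+$ (present when $d^+(v)\ge 1$), to which the tails of all edges leaving $v$ are attached, and $v^-$ is \emph{not} joined to $v^+$. The edges of $\Lambda$ are the edges of $\Theta$ (double edges kept as two edges), an edge from $u$ to $v$ now running from $u^+$ to $v^-$. By construction, two edges of $\Theta$ are related by one of the generators of the equivalence relation --- both entering, or both leaving, a common vertex --- exactly when they share a node of $\Lambda$; hence the equivalence classes of edges are precisely the connected components of $\Lambda$. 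Moreover the valence of the node $v^{\pm}$ in $\Lambda$ equals $d^{\pm}(v)\le 2$, so $\Lambda$ is a disjoint union of simple paths and simple cycles.

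It then remains to translate this back to $\Theta$ and to read off the orientations. For a connected component $e_1,e_2,\dots$ of $\Lambda$, consecutive edges $e_j,e_{j+1}$ share a node of $\Lambda$, i.e.\ a common endpoint in $\Theta$, so the class is indeed a path in $\Theta$, closed exactly when the component of $\Lambda$ is a cycle. Finally, along such a path $e_j$ is glued to $e_{j-1}$ at one of its two nodes and to $e_{j+1}$ at the other, and these two nodes are the head-node and the tail-node of $e_j$; hence the gluings alternate between ``two heads'' --- where $e_{j-1}$ and $e_{j+1}$ both point into the shared $\Theta$-vertex --- and ``two tails'' --- where they both point out --- which is precisely the asserted alternation of orientations as one traverses the path. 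I expect the only load-bearing point to be the degree bound of the second paragraph: without $d^{\pm}\le 2$, a vertex with three coincident incoming edges would produce a triangle in $\Lambda$ and destroy the path structure, so it is genuinely used that $\Theta$ is assembled from just two multipaths. Everything else, including the degenerate cases (a one-edge class, or the $2$-cycle formed by a double edge of $\Theta$), is routine.
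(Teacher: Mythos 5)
The proposal is correct and takes essentially the same route as the paper: the paper also reduces the statement to the observation that every vertex of $\Theta$ has in-degree and out-degree at most two (derived from $\Theta$ being a union of two $\Gamma\Delta$-paths and Lemma~\ref{lem:charpaths}), and then asserts the path structure ``clearly follows.'' Your vertex-splitting construction of $\Lambda$ is simply a clean formalization of that one-line deduction, correctly identifying the equivalence classes with components of a graph of maximum valence two and reading off the alternation from the bipartite ($+$/$-$) structure; the stray inclusion of the type $(1,2)$ in your degree list is harmless since only the bound $d^{\pm}\le 2$ is used.
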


We will call this decomposition of $\Theta$ the {\em canonical path
  decomposition of $\Theta$} (see Figure \ref{net:decomp}).

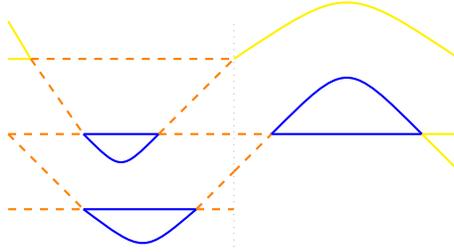
\begin{figure}[h]
\begin{tikzpicture}
\draw[opacity=0.3,dotted] (3,-0.5) -- (3,2.5);
\draw [orange,thick,dashed](0,0) -- (1,0);
\draw [blue,thick](1,0) -- (2.5,0);
\draw [orange,thick,dashed](2.5,0) -- (3,0);
\draw [orange,thick,dashed] (0,1) -- (1,1);
\draw [blue,thick](1,1) -- (2,1);
\draw [orange,thick,dashed](2,1) -- (3.5,1);
\draw [blue,thick](3.5,1) -- (5.5,1);
\draw [yellow,thick](5.5,1) -- (6,1);
\draw [yellow,thick] (0,2) -- (0.3,2);
\draw [orange,thick,dashed](0.3,2) -- (3,2);
\draw [yellow,thick](3,2) .. controls (4.5,3) .. (6,2);
\draw [orange,thick,dashed](0,1) -- (1,0);
\draw [blue,thick](1,0) .. controls (1.8,-0.6)  .. (2.5,0);
\draw [orange,thick,dashed](2.5,0) -- (3,0.5);
\draw [orange,thick,dashed](3,0.5) -- (3.5,1) ;
\draw [blue,thick](3.5,1) .. controls (4.5,2) .. (5.5,1);
\draw [yellow,thick](5.5,1) -- (6,0.5);
\draw [yellow,thick](0,2.5)  -- (0.3,2)  ;
\draw [orange,thick,dashed](0.3,2) -- (1,1);
\draw [blue,thick](1,1) .. controls (1.5,0.5) .. (2,1);
\draw [orange,thick,dashed](2,1) -- (3,2);
\end{tikzpicture}\caption{Canonical path decomposition of $\Theta$ of type $[5,3]$ ($k=3$ and $i=2$).} \label{net:decomp}
\end{figure}

\begin{proof}
The fact that $\Theta$ is the
  union of two multipaths implies that
\begin{itemize}
\item  the  sources of $\Theta$ are of degree $(0,1)$ or $(0,2)$,
\item the degrees of all the other vertices have degrees from
  the following list:
\[   (2,2),\;(2,1),\;(2,0),\;(1,1),\;(1,0);
\]
\item moreover, the vertices that  are neither sources nor sinks and that are not
on the middle-line of $\Theta$ can have only degrees $(2,2)$ or $(1,1)$.
\end{itemize}
  The statement of the lemma clearly follows from the fact that no vertex of $\Theta$ has
  in- or out-degree greater than two.
\end{proof}
\begin{rem}\label{parity} Because of the alternating orientation of the edges, the following paths of the canonical path decomposition have an even number of edges:
\begin{itemize}
\item closed paths,
\item open paths beginning and ending at a source,
\item open paths beginning and ending at a sink,
\item open paths beginning and ending at the middle line.
\end{itemize}
\end{rem}

Now we define an {\em alternating coloring} of $\Theta$ as a
coloring of the edges of $\Theta$ in two colors in such a way that
the consecutive edges of each path of its canonical path decomposition are
colored differently (see Figure \ref{net:alternate}). By Remark \ref{parity}, such a coloring always exists.

\begin{figure}[h]
\begin{tikzpicture}
\draw[opacity=0.3,dotted] (3,-0.5) -- (3,2.5);
\draw [red,thick](0,0) -- (1,0);
\draw [green,thick](0,1) -- (1,0);
\draw [red,thick] (0,1) -- (1,1);
\draw [green,thick](0.3,2) -- (1,1);
\draw [red,thick](0.3,2) -- (3,2);
\draw [green,thick](2,1) -- (3,2);
\draw [red,thick](2,1) -- (3.5,1);
\draw [green,thick](2.5,0)-- (3.5,1) ;
\draw [red,thick](2.5,0) -- (3,0);
\draw [green,thick](5.5,1) -- (6,1);
\draw [red,thick](5.5,1) -- (6,0.5);
\draw [red,thick] (0,2) -- (0.3,2);
\draw [green,thick](0,2.5)  -- (0.3,2);
\draw [green,red](3,2) .. controls (4.5,3) .. (6,2);
\draw [green,thick](1,1) -- (2,1);
\draw [red,thick](1,1) .. controls (1.5,0.5) .. (2,1);
\draw [red,thick](3.5,1) -- (5.5,1);
\draw [green,thick](3.5,1) .. controls (4.5,2) .. (5.5,1);
\draw [green,thick](1,0) -- (2.5,0);
\draw [red,thick](1,0) .. controls (1.8,-0.6)  .. (2.5,0);
\end{tikzpicture}\caption{An alternating coloring of $\Theta$.} \label{net:alternate}
\end{figure}
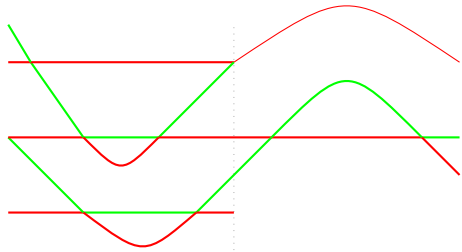

\begin{lem}  Assume that $\Theta$ has no vertices of degree $(1,1)$,
  and denote the elements of its path decomposition by $Q$.
  \begin{enumerate}
  \item Then there are precisely $2^{|Q|}$ alternating colorings of
    $\Theta$, corresponding to a coloring of each of the $|Q|$ paths
    chosen independently.
  \item The alternating colorings of $\Theta$ coincide with the valid
    colorings of $\Theta$.
  \end{enumerate}
\end{lem}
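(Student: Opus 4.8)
\emph{Proof strategy.} Throughout I would use the dictionary furnished by the canonical path decomposition: two edges of $\Theta$ are consecutive in a path of $Q$ exactly when they are both incoming, or both outgoing, at a common vertex, and every vertex of $\Theta$ has at most two incoming and at most two outgoing edges. For part (1), an alternating coloring amounts to choosing, independently for each $P\in Q$, the color of a single edge of $P$, which then forces the colors of all remaining edges of $P$ by the alternating condition; for an open $P$ this is automatically consistent, and for a closed $P$ it is consistent precisely because $P$ has an even number of edges (Remark \ref{parity}). Hence each path carries exactly two alternating colorings, the choices are independent across the edge-disjoint paths of $Q$, and there are $2^{|Q|}$ alternating colorings in all.

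For part (2) I would prove the two inclusions separately. A valid coloring is alternating: if it arises from a decomposition $\Theta=R\cup G$ with $R\in\pp pq$ and $G\in\pp{p'}{q'}$, then each of $R$ and $G$, being a composable union of two families of pairwise disjoint paths (one in $\Gamma$, one in $\Delta$), has in-degree and out-degree at most $1$ at every vertex of $\Theta$. So two incoming edges at a vertex, or two outgoing edges, necessarily carry opposite colors, and by the dictionary consecutive edges along any path of $Q$ are colored differently.

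Conversely, given an alternating coloring with red subgraph $R$ and green subgraph $G$, it suffices by Lemma \ref{lem:charpaths} to verify that $R$ (and, symmetrically, $G$) has vertices only of degrees $(0,1)$, $(1,0)$, $(1,1)$, with the degree-$(0,1)$ vertices among the sources of $\Gamma$ and the degree-$(1,0)$ vertices among the sinks of $\Gamma$ or of $\Delta$. The bound ``in- and out-degree $\le1$'' is immediate, since two like-oriented edges at a vertex are consecutive in a path of $Q$, hence oppositely colored, and so cannot both lie in $R$. To locate where $R$ can have degree $(0,1)$ or $(1,0)$, recall that $\Theta=\alpha\cup\beta$ has no vertex of degree $(1,1)$: every vertex of $\Theta$ interior to $\Gamma$ or to $\Delta$ therefore has degree $(2,2)$, so its two incoming edges have opposite colors and $R$ has degree $(1,1)$ there; and every vertex $v$ of $\Theta$ on the middle line has in-degree at least its out-degree, because composability of the multipaths constituting $\alpha$ and $\beta$ matches each edge leaving $v$ into $\Delta$ with an edge entering $v$ from $\Gamma$. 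Consequently a middle-line vertex with an outgoing edge has degree $(2,1)$ or $(2,2)$, its incoming edges have opposite colors, and $R$ cannot have degree $(0,1)$ there. Thus $R$ can have degree $(0,1)$ only at sources of $\Gamma$ and degree $(1,0)$ only at sinks of $\Gamma$ or $\Delta$; since $\Theta$ contains no oriented cycle, Lemma \ref{lem:charpaths} then exhibits $R\in\pp pq$ and $G\in\pp{p'}{q'}$ (necessarily with $p+p'=2k-1$ and $q+q'=2i-1$), so the coloring is valid.

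The one step that requires an idea rather than bookkeeping is the middle-line inequality ``in-degree $\ge$ out-degree'' in the last paragraph: without it an alternating coloring could in principle produce a monochromatic subgraph with a spurious source on the middle line, which would violate composability. This is exactly where the hypothesis that $\Theta$ is assembled from \emph{composable} multipaths is essential, and it is the point I would take most care over.
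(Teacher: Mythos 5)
Your argument is correct and follows the same overall approach as the paper's proof: identify valid colorings with those whose incoming edges at every vertex of in-degree two, and whose outgoing edges at every vertex of out-degree two, carry opposite colors, and observe that this is precisely the definition of an alternating coloring. The paper's proof is terse and leaves implicit the verification that an alternating coloring actually yields a pair of $\Gamma\Delta$-paths; you carry this out explicitly via Lemma \ref{lem:charpaths} and the degree analysis on the middle line, which is a correct filling-in of what the paper merely asserts.
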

\begin{proof}
  Clearly, every path has precisely two alternating colorings, and the
  colorings of different paths are independent of each other. This
  implies the first statement.

  A coloring of the edges of $\Theta$ is valid if and
  only if
  \begin{itemize}
  \item at any vertex having in-degree 2, the two incoming edges are
    colored differently,
  \item at any vertex having out-degree 2, the two outgoing edges are
    colored differently. In particular, the two edges of a double edge
    have different colors.
  \end{itemize}
  These conditions coincide with the definition of an alternating
  coloring.
\end{proof}

Now we prove a  somewhat strengthened version of the decomposition
statement, which will imply our theorem:
\begin{prop}
  \label{hornprop}
Let $\Theta$ be a generalized planar network with the following
properties:
\begin{itemize}
\item$\Theta$ is  of type $[2k-1,2i-1]$,
\item the sources of $\Theta$ have degrees $(0,1)$ or $(0,2)$,
\item all vertices of $\Theta$, apart from the sources, have degrees
  $(d_1,d_2)$ with $2\ge d_1\ge d_2$.
\end{itemize}
Then there is $\tilde\alpha\in\ppt ki$ and $\tilde\beta\in\ppt{k-1}{i-1}$
such that $\tilde\alpha\cup\tilde\beta=\Theta$.
\end{prop}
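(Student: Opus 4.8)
The plan is to recover the desired decomposition $\Theta=\tilde\alpha\cup\tilde\beta$ from a \emph{valid coloring} of $\Theta$, chosen so that the red multipath has type $[k,i]$ and the green one has type $[k-1,i-1]$ (their sum being $[2k-1,2i-1]$, as it must be). By the preceding lemmas, valid colorings coincide with alternating colorings, and an alternating coloring is exactly an independent choice, on each path $P$ of the canonical path decomposition $Q$ of $\Theta$, of one of the two ways to $2$-color the edges of $P$ so that consecutive edges differ. So the argument is purely combinatorial: one must show that the number of red edges incident to the sources, and the number incident to the sinks, can be tuned to the values that give the red part type $[k,i]$.

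First I would classify the endpoints of the paths of $Q$. Using the degree hypotheses (sources of degree $(0,1)$ or $(0,2)$; every other vertex of degree $(1,0)$, $(2,0)$, $(2,1)$ or $(2,2)$), one checks that a path of $Q$ can only terminate at a degree-$(0,1)$ source (call such an endpoint of type $S$), at a degree-$(1,0)$ sink of $\Theta$ on the right-hand line (type $T$), or at a degree-$(1,0)$ or $(2,1)$ vertex on the middle line (type $M$); the degree-$(0,2)$ sources and the degree-$(2,0)$ sinks are interior to paths of $Q$. Since the two edges at a degree-$(0,2)$ source, and likewise the two edges at a degree-$(2,0)$ sink, are consecutive on a path of $Q$, any alternating coloring makes exactly one of each such pair red. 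Hence, writing $A$ and $B$ for the numbers of degree-$(0,2)$ sources and degree-$(2,0)$ sinks, the out-degree of the red multipath at the left line equals $A$ plus the number of red edges at degree-$(0,1)$ sources, and its in-degree at the right line equals $B$ plus the number of red edges at degree-$(1,0)$ sinks. With $2A+a=2k-1$ and $2B+b=2i-1$, where $a$ and $b$ count the $S$- and the $T$-endpoints, we get that $a$ and $b$ are odd, and the red multipath has type $[k,i]$ if and only if exactly $\frac{a+1}{2}$ of the $S$-edges and exactly $\frac{b+1}{2}$ of the $T$-edges are red.

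Next I would work out, path by path, the effect of the two colorings; this is a short parity count based on the fact that the orientations of the edges alternate along each path of $Q$, so that the two terminal edges of $P$ occupy positions of equal parity when $P$ has an odd number of edges, and of opposite parity when $P$ has an even number of edges. One finds: a path with both endpoints of type $S$ contributes exactly one red $S$-edge, whatever the coloring; a path with both endpoints of type $T$, exactly one red $T$-edge; a path from an $S$-endpoint to a $T$-endpoint contributes, at our choice, either one red $S$-edge together with one red $T$-edge, or neither; a path from an $S$-endpoint to an $M$-endpoint, at our choice, $0$ or $1$ red $S$-edge; a path from a $T$-endpoint to an $M$-endpoint, $0$ or $1$ red $T$-edge; and $M$-to-$M$ and closed paths contribute nothing to either count. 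Let $n_1,n_2,n_3$ be the numbers of paths of the first, third and fourth kinds, and $m_1,m_3$ the numbers of the second and fifth, so that $a=2n_1+n_2+n_3$ and $b=2m_1+n_2+m_3$. ``Activating'' $x$ of the $S$-to-$T$ paths, $y$ of the $S$-to-$M$ paths and $z$ of the $T$-to-$M$ paths, the red part acquires type $[k,i]$ exactly when
\[
x+y=\frac{n_2+n_3+1}{2},\qquad x+z=\frac{n_2+m_3+1}{2},
\]
subject to $0\le x\le n_2$, $0\le y\le n_3$ and $0\le z\le m_3$.

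It remains to solve this linear system in nonnegative integers, which is where all the work lies. Since $a$ and $b$ are odd, $n_2+n_3$ and $n_2+m_3$ are odd, so the right-hand sides are integers and $n_3\equiv m_3\pmod 2$. Eliminating $y$ and $z$, the constraints say exactly that $x$ is an integer with
\[
\max\Bigl\{0,\tfrac{n_2-n_3+1}{2},\tfrac{n_2-m_3+1}{2}\Bigr\}\ \le\ x\ \le\ \min\Bigl\{n_2,\tfrac{n_2+n_3+1}{2},\tfrac{n_2+m_3+1}{2}\Bigr\},
\]
and an elementary check---each of the three listed lower bounds is at most each of the three listed upper bounds, using that $n_2\ge 1$ whenever $n_3=0$ or $m_3=0$ (forced by the parity of $n_2+n_3$, resp.\ $n_2+m_3$)---shows that this interval contains an integer. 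Choosing such an $x$ and the corresponding coloring, we obtain $\Theta=\tilde\alpha\cup\tilde\beta$ with $\tilde\alpha$ the red and $\tilde\beta$ the green multipath, of types $[k,i]$ and $[k-1,i-1]$; by the preceding lemma this coloring comes from a genuine decomposition of $\Theta$ into two multipaths, so $\tilde\alpha\in\ppt ki$ and $\tilde\beta\in\ppt{k-1}{i-1}$, as required. The main obstacle, I expect, is the endpoint classification of $Q$ (especially isolating the middle-line endpoints) together with the observation that the forced coupling of $S$-edges with $T$-edges along the $S$-to-$T$ paths still leaves the type equations solvable; the concluding inequality verification, while it must be carried out, is routine arithmetic.
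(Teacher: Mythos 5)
Your proof is correct and follows the same route as the paper's: classify the components of the canonical path decomposition of $\Theta$ by the location of their endpoints (your $S/T/M$ is the paper's $L/R/0$), use the fact that alternating colorings of the components can be chosen independently, and show that the red sub-multipath's source- and sink-degrees can be tuned to $k$ and $i$ simultaneously. The one place where you are usefully more explicit than the paper is the observation that an $S$-to-$T$ path necessarily has an odd number of edges (the first and last edges point the same way relative to traversal), so any alternating coloring either reddens both its source-edge and its sink-edge or neither; the paper's two-case algorithm on the parity of $|Q_{LR}|$ silently relies on exactly this coupling when it speaks only of coloring ``the sink-edges of half of the paths in $Q_{LR}$.'' You also replace the paper's ad hoc case split by a clean linear system in $x,y,z$ with a nonempty-interval verification, which is a bit more systematic but carries the same content. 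Both arguments also use the parity facts that the total contribution of $SS$, $TT$, $MM$, and closed components is even, and that the remaining contributions to the source- and sink-degrees are odd (so that $k$ and $i$ are ``just over half''), which your $a$ odd, $b$ odd derivation makes precise.
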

\begin{proof}
  We can partition the set $Q$ of path components of $\Theta$ as
  follows:
\[  Q=Q_{00}\cup Q_{L0}\cup Q_{0R} \cup Q_{LR}\cup Q_{LL}\cup Q_{RR}\cup Q_{cl},
\]
where $Q_{cl}$ consists of all closed paths and the two indices of the rest of the $Q$'s indicate the beginning and the end of the path
with the convention that
\begin{itemize}
\item $L$ stands for a source of $\Theta$,
\item $R$ stands for a sink of $\Theta$,
\item $0$ stands for an internal vertex of $\Theta$.
\end{itemize}
We must show that among the $2^{|Q|}$ alternating colorings of the paths
in $Q$, there is at least one for which precisely $k$ edges emanating
from a source (source-edges), and precisely $i$ edges ending in a sink
(sink-edges) are red. It is clear that in order to specify the
coloring of a path, it is sufficient to color its source-edge or
sink-edge whenever the path has one of these.

We note that the contribution of each of $Q_{00}$, $Q_{LL}$, $Q_{RR}$, and $Q_{cl}$ to the source-degree and the sink-degree is even, so their coloring is not essential. The total contribution of $Q_{L0}$ and $Q_{LR}$ to the source-degree and the total contribution of $Q_{0R}$ and $Q_{LR}$ to the sink-degree are odd. Therefore the coloring algorithm is as follows:
\begin{itemize}
\item if $|Q_{LR}|$ is even, then $|Q_{0R}|$ is odd and $|Q_{L0}|$
  is odd. Then we color in red
  \begin{itemize}
  \item the sink-edges of half of the paths in
  $Q_{LR}$ and of $(|Q_{0R}|+1)/2$ paths in $Q_{0R}$,
\item the source-edges of  $(|Q_{L0}|+1)/2$ paths in  $Q_{L0}$.
  \end{itemize}
\item if $|Q_{LR}|$ is odd, then $|Q_{0R}|$ and $|Q_{L0}|$
  are even. Then we color in red
  \begin{itemize}
  \item the sink-edges of  $(|Q_{LR}|+1)/2$ of the paths in
  $Q_{LR}$ and  of half of the paths in $Q_{0R}$,
\item the source-edges of half of paths in  $Q_{L0}$.
  \end{itemize}
\end{itemize}
 This coloring algorithm ensures that the number of paths whose source-edges are colored in red is greater by $1$ than the number of paths with source-edges colored in green. The same is true for the sink-edges. Thus we indeed obtain an element of $P_i^k\Theta$ colored in red and an element of $P_{i-1}^{k-1}\Theta$ colored in green.
\end{proof}

As we explained above, Proposition \ref{hornprop} implies  Inequality \eqref{eqm}.

The other two inequalities from \eqref{eq:GD} are proved in a similarly.  Consider the second inequality. Let
$\alpha\in\pp{k-1}{i-1}$ and $\beta\in\pp{k}{i+1}$. Then $\Theta=\alpha\cup\beta$ is a
generalized planar network of type $[2k-1,2i]$. As before, we obtain a
path decomposition $Q=Q_{00}\cup Q_{L0}\cup Q_{0R} \cup Q_{LR}\cup
Q_{LL}\cup Q_{RR}\cup Q_{cl}$ of $\Theta$ with $2^{|Q|}$ alternating colorings. Among these colorings, we must  find one with precisely $k$
source-edges and precisely $i$ sink-edges colored in red. For this,
we use the following algorithm:

\begin{itemize}
\item if $|Q_{LR}|$ is even, then $|Q_{0R}|$ is even while $|Q_{L0}|$
  is odd. Then we color in red
  \begin{itemize}
  \item the sink-edges of half of the paths in
  $Q_{LR}$ and in $Q_{0R}$, 
\item the source-edges of  $(|Q_{L0}|+1)/2$ paths in  $Q_{L0}$.
  \end{itemize}
\item if $|Q_{LR}|$ is odd, then $|Q_{0R}|$ is odd and $|Q_{L0}|$
  is even. Then we color in red
  \begin{itemize}
  \item the sink-edges of  $(|Q_{LR}|+1)/2$ of the paths in
  $Q_{LR}$ and  of $(|Q_{0R}|-1)/2$ of the paths in $Q_{0R}$, 
\item the source-edges of half of the paths in  $Q_{L0}$.
  \end{itemize}
\end{itemize}
This coloring algorithm ensures that the number of paths with red souce-edges is greater by 1 than the number of paths with green source-edges, whereas the number of paths with red sink-edges is equal to the number of green sink-edges, and so we  indeed obtain an element of $P_i^k$ colored in red and an element of $P_{i}^{k-1}$ colored in green.

Finally, for the third inequality, we have a generalized planar
network $\Theta$ of type $[2k,2i-1]$. The coloring procedure is similar to the other two cases, and
will be omitted.

We have thus proved inequalities \eqref{eq:GD}, which completes the proof of the theorem.

\begin{rem}A similar proof can be given for Theorem \ref{thm:GZ1}, which is the analog of  Theorem \ref{thm:Horn1} in the Gelfand--Zeitlin case.
\end{rem}

\subsection{Proof of Theorem  \ref{thm:Horn2}}

The strategy of the proof of Theorem \ref{thm:Horn2} is analogous to
that of Theorem \ref{thm:GZ2}.
The steps of the proof are as follows:

1. We identify the special collection $B=\{\beta(k,i);\; 0\leq i \leq
  k\leq n\}$. Denote by $\tau(k)$ the $k$th
  horizontal line of $\Gamma_0\circ\Delta_0$. Then 
\[     \beta(k,i)=\alpha(n-i,k-i)\cup\bigcup_{j=n-i+1}^n\tau(j).
\]

\begin{figure}[h]
\begin{tikzpicture}
\draw (-1,0) -- (8.5,0);
\draw (-1,0.5) -- (8.5,0.5);
\draw (-1,1) -- (8.5,1);
\draw (-1,1.5) -- (8.5,1.5);
\draw (-1,2) -- (8.5,2);
\draw (-1,2.5) -- (8.5,2.5);
\draw (0.5,2) -- (1,1.5);
\draw (2,2) -- (2.5,1.5);
\draw (3.5,2) -- (4,1.5);
\draw (5,2) -- (5.5,1.5);
\draw (1.5,1.5) -- (2,1);
\draw (3,1.5) -- (3.5,1);
\draw (4.5,1.5) -- (5,1);
\draw (2.5,1) -- (3,0.5);
\draw (4,1) -- (4.5,0.5);
\draw (3.5,0.5) -- (4,0);
\draw (-0.5,2.5) -- (0,2);
\draw (1,2.5) -- (1.5,2);
\draw (2.5,2.5) -- (3,2);
\draw (4,2.5) -- (4.5,2);
\draw (5.5,2.5) -- (6,2);
\draw[opacity=0.3] (6.5,0) -- (6.5,2.5);
\draw[ultra thick,red](-1,2.52) -- (8.5,2.52);
\draw[ultra thick,red](-1,2.02) -- (8.5,2.02);
\draw[ultra thick,red](-1,1.52) -- (3.02,1.52) -- (3.52,1.02) -- (4.02,1.02) -- (4.52,0.52) -- (6.5,0.52);
\draw[ultra thick,red](-1,1.02) -- (2.52,1.02) -- (3.02,0.52) -- (3.52,0.52) -- (4.02,0.02) -- (6.5,0.02);
\fill[black] (-1,0) circle (0pt) node[left]{1};
\fill[black] (-1,2.5) circle (0pt) node[left]{6};
\fill[black] (8.5,0) circle (0pt) node[right]{1};
\fill[black] (8.5,2.5) circle (0pt) node[right]{6};
\draw (2.5,3) node{$\Gamma_0$};
\draw (7.5,3) node{$\Delta_0$};
\draw (6,2.4) node[above,font=\small]{$\tau(6)$};
\draw (7,1.9) node[above,font=\small]{$\tau(5)$};
\end{tikzpicture}
 \caption{$\beta(4,2)$ for $n=6$, shown in solid red.}\label{2net:betas}
\end{figure}
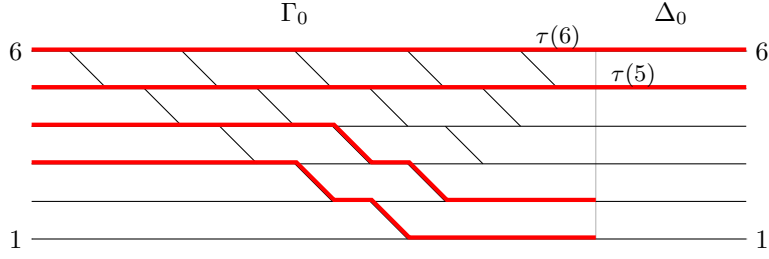

The following counterpart of Lemma \ref{tautology} is valid: the theorem follows from the
inclusion
\begin{equation}
  \label{inclusion}
  w_B^{-1}(\bar\C_3)\cap\{t_0^0=0\} \subset
  \{\epsilon\in\T^{E(\Gamma_0\circ\Delta_0)}|\;
  m^k_i\Gamma_0\Delta_0(\epsilon)=
  w_{\beta(k,i)}(\epsilon),\,0\leq i \leq
  k\leq n\};
\end{equation}
in words, we must show that if, for some weighting, the weights of
our special collection satisfy the inequalities defining $\C_3$ (where $t_0^0=0$), then the multipaths of our collection are maximal
for this weighting.

2. We identify the regions corresponding to the inequalities \eqref{eq:GD} for $m^k_i=w_{\beta(k,i)}$. We denote the cells of $\Gamma_0$ as before and the cells of $\Delta_0$ by $[i,i+1]$ for $i=0,\dots,n$.  The first inequality corresponds to $r^\searrow_{[k,i]}$, the third  to $r^\nearrow_{[k,i]}$, while the second inequality corresponds to
\[   r^\rightarrow_{[k,i]}=c_{[k,i]}+c_{[k,i+1]}+\dots+c_{[k,k]}+c_{[k,k+1]}
\]
(see Figure \ref{2net:regions-right}).

\begin{figure}[h]
\begin{tikzpicture}
\draw (-1,0) -- (8.5,0);
\draw (-1,0.5) -- (8.5,0.5);
\draw (-1,1) -- (8.5,1);
\draw (-1,1.5) -- (8.5,1.5);
\draw (-1,2) -- (8.5,2);
\draw (-1,2.5) -- (8.5,2.5);
\draw (0.5,2) -- (1,1.5);
\draw (2,2) -- (2.5,1.5);
\draw (3.5,2) -- (4,1.5);
\draw (5,2) -- (5.5,1.5);
\draw (1.5,1.5) -- (2,1);
\draw (3,1.5) -- (3.5,1);
\draw (4.5,1.5) -- (5,1);
\draw (2.5,1) -- (3,0.5);
\draw (4,1) -- (4.5,0.5);
\draw (3.5,0.5) -- (4,0);
\draw (-0.5,2.5) -- (0,2);
\draw (1,2.5) -- (1.5,2);
\draw (2.5,2.5) -- (3,2);
\draw (4,2.5) -- (4.5,2);
\draw (5.5,2.5) -- (6,2);
\draw[opacity=0.3] (6.5,0) -- (6.5,2.5);
\draw[fill=gray,fill opacity=0.15, line width=0.001] (2.5,1.5) -- (6.5,1.5) -- (6.5,2) -- (2,2) -- cycle;
\draw[fill=gray,fill opacity=0.15, line width=0.001] (6.5,1.5) -- (6.5,2) -- (8.5,2) -- (8.5,1.5) -- cycle;
\fill[black] (-1,0) circle (0pt) node[left]{1};
\fill[black] (-1,1.5) circle (0pt) node[left]{$k$};
\fill[black] (-1,2.5) circle (0pt) node[left]{$n$};
\fill[black] (8.5,0) circle (0pt) node[right]{1};
\fill[black] (8.5,1.5) circle (0pt) node[right]{$k$};
\fill[black] (8.5,2.5) circle (0pt) node[right]{$n$};
\path (3,1.75) node [font=\small] {$[k,i]$};
\path (6,1.75) node [font=\small] {$[k,k]$};
\path (7.5,1.75) node [font=\small] {$[k,k+1]$};
\draw (2.5,3) node{$\Gamma_0$};
\draw (7.5,3) node{$\Delta_0$};
\end{tikzpicture}
 \caption{$r^\rightarrow_{[k,i]}$.}\label{2net:regions-right}
\end{figure}

We thus obtain the graphical representation:
\begin{equation}
  \label{graphrel}
   w_B(\epsilon)\in\C_3\Leftrightarrow
r^\nearrow_{[k,i]}(\epsilon)\geq0,\,r^\searrow_{[k,i]}(\epsilon)\leq0,\,r^\rightarrow_{[k,i]}(\epsilon)\geq0.
\end{equation}

3. Now we must show that if condition \eqref{graphrel} holds for a
weighting $\epsilon$ of $\Gamma_0\circ\Delta_0$, i.e., if
$r^\nearrow_{[k,i]},-r^\searrow_{[k,i]},r^\rightarrow_{[k,i]}\geq0$
for all $i\leq k< n$, then the collection $B$ is maximal for
$\epsilon$.

From now on, we assume that we have a fixed weighting $\epsilon$ of
$\Gamma_0\circ\Delta_0$ satisfying \eqref{graphrel}, and we will often drop
$\epsilon$ from the notation. When we say that  $\alpha\in\ppo
ki$ is {\em maximal}, we  mean that
$w_\alpha(\epsilon)=m^k_i\Gamma_0\Delta_0(\epsilon)$.

Note that every
$\alpha\in\pp ki$ has two decompositions:
\[    \alpha=\gamma\cup\delta=\alpha'\cup\alpha'',\,
\gamma\in P_k\Gamma_0,\,\delta\in P_i\Delta_0,\,\alpha'\in
P_i(\Gamma_0\circ\Delta_0),\,\alpha''\in P_{k-i}\Gamma_0.
\]

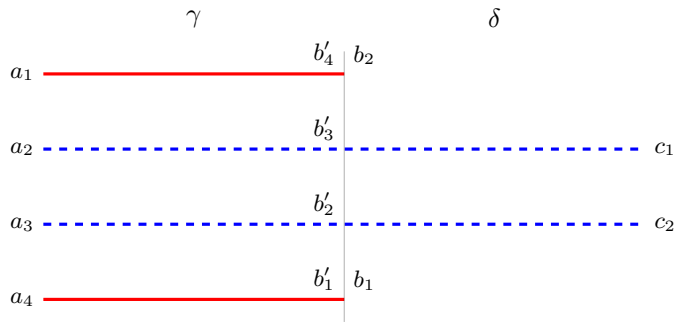
\begin{figure}[h]
\begin{tikzpicture}
\draw[very thick,red] (0,0) -- (4,0);
\draw[very thick,blue,dashed] (0,1) -- (8,1);
\draw[very thick,blue,dashed] (0,2) -- (8,2);
\draw[very thick,red] (0,3) -- (4,3);
\draw[opacity=0.3] (4,-0.3) -- (4,3.3);
\draw(0,3) node[left,font=\small]{$a_1$};
\draw(0,2) node[left,font=\small]{$a_2$};
\draw(0,1) node[left,font=\small]{$a_3$};
\draw(0,0) node[left,font=\small]{$a_4$};
\draw(4,0) node[above right,font=\small]{$b_1$};
\draw(4,3) node[above right,font=\small]{$b_2$};
\draw(4,0) node[above left,font=\small]{$b'_1$};
\draw(4,1) node[above left,font=\small]{$b'_2$};
\draw(4,2) node[above left,font=\small]{$b'_3$};
\draw(4,3) node[above left,font=\small]{$b'_4$};
\draw(8,2) node[right,font=\small]{$c_1$};
\draw(8,1) node[right,font=\small]{$c_2$};
\draw (2,3.5) node[above]{$\gamma$};
\draw (6,3.5) node[above]{$\delta$};
\end{tikzpicture}
 \caption{Path $\alpha'$ in dashed blue, path $\alpha''$ in solid red.}\label{2net:sequences}
\end{figure}

We record the endpoints of these multipaths as follows (see Figure \ref{2net:sequences}):
\begin{itemize}
\item denote by   $\ba=(a_1,\dots,a_k)$ the decreasing sequence of
  sources of $\gamma$;
\item denote by $\bb=(b_1,\dots,b_{k-i})$ the increasing sequence of
  sinks of $\alpha''$,
\item denote by $\bb'=(b_1',\dots,b_{k}')$ the increasing sequence of
  sinks of $\gamma$. Note that $\bb$ is a subsequence of $\bb'$.
\item denote by
  $\bc=(c_1,\dots,c_i)$ the decreasing sequence of sinks of $\alpha'$
  (or $\delta$).
\end{itemize}
When necessary, we will indicate the index $\alpha$ explicitly
by writing, for example, $\bb_\alpha$ instead of $\bb$.

Now, similarly to Lemmas \ref{sinks} and \ref{sources}, we can
formulate a sequence of lemmas normalizing the form of a maximal path
that eventually lead to the statement that $\beta(k,i)$ is maximal for
the weighting $\epsilon$.

\begin{lem}\label{hornb}
  Let $\alpha\in \ppo ki$ be a maximal path for which $\sum \bb$
  minimal. Then $b_{k-i}=b'_{k-i}$, which means that $\alpha''$
  consists of the lowest $k-i$ paths of $\gamma$.
\end{lem}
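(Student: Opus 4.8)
The plan is to run the same scheme as in Lemma~\ref{sinks}: fix the weighting $\epsilon$ with \eqref{graphrel} (so in particular $r^\searrow_{[k,i]}(\epsilon)\le 0$ for all $0<i\le k<n$), argue by contradiction, and produce from a hypothetical bad configuration a maximal element of $\ppo ki$ with strictly smaller $\sum\bb$. Concretely, suppose $\alpha=\gamma\cup\delta=\alpha'\cup\alpha''$ is maximal with $\sum\bb$ minimal among maximal paths, but $b_{k-i}>b'_{k-i}$. Since $b_j\ge b'_j$ always, this exactly says that the stopping multipath $\alpha''$ is \emph{not} the union of the lowest $k-i$ paths of $\gamma$; hence there is a lowest stopping path $P\in\alpha''$, say ending on the middle line at height $h_P$, with a continuing path $Q$ of $\gamma$ immediately below it (consecutive among the middle-line sinks of $\gamma$), ending at some height $h_Q<h_P$ and extending through $\Delta_0$ to the sink $h_Q$.

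I would then split into two cases according to the $\Delta_0$-weights $w_{e_{h_P}}$ and $w_{e_{h_Q}}$ (recall that $\Delta_0$ consists of single straight edges, so a continuing path is pinned down by its middle-line height and contributes just one weight $w_{e_h}$). If $w_{e_{h_P}}=w_{e_{h_Q}}$, simply interchange the roles of $P$ and $Q$: let $P$ continue to the sink $h_P$ and let $Q$ stop at $h_Q$, leaving every edge of $\gamma$ untouched. This produces $\tilde\alpha\in\ppo ki$ with $w_{\tilde\alpha}=w_\alpha$ and $\sum\bb_{\tilde\alpha}=\sum\bb_\alpha-(h_P-h_Q)<\sum\bb_\alpha$, contradicting minimality. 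If $w_{e_{h_P}}\ne w_{e_{h_Q}}$ then, since $\alpha$ is maximal, one necessarily has $w_{e_{h_Q}}>w_{e_{h_P}}$ (the $i$ continuing paths must use the $i$ heaviest $\Delta_0$-edges among the sinks of $\gamma$), and the interchange would lose weight. Instead, keeping $Q$ continuing at $h_Q$, I would reroute the $\Gamma_0$-part of $\gamma$ so as to slide $P$ down below $h_Q$: as in Lemma~\ref{sinks}, $P$ carries the upper edge of the cell $[h_P-1,h_P-1]$ of $\Gamma_0$, and subtracting the region functional $r^\searrow_{[q,\,h_P-1]}$ for the appropriate $q$ lowers its endpoint while changing the total weight by $-r^\searrow_{[q,\,h_P-1]}(\epsilon)\ge 0$; iterating this down past $Q$ (and past any further continuing paths responsible for the inversion) yields a maximal $\tilde\alpha\in\ppo ki$ with $\sum\bb$ strictly decreased.

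The step I expect to be the real obstacle is precisely the reconciliation of these two mechanisms at the $\Gamma_0$--$\Delta_0$ interface: one must check that the reroute in the second case can always be realized as a valid element of $\ppo ki$ --- in particular ruling out a ``deadlock'' in which the continuing path $Q$ sits at the bottom ($h_Q=1$), so that $P$ cannot be slid below it and yet the interchange is weight-losing --- and that throughout the operation no weight is gained or lost beyond the controlled contribution $-r^\searrow$. This is where the inequalities $r^\searrow_{[k,i]}(\epsilon)\le 0$ (rather than just the combinatorics of maximal paths) are genuinely used, exactly as in Lemma~\ref{sinks}. Once Lemma~\ref{hornb} is established, the subsequent normalizations of $\bb'$, of $\ba$, and of $\bc$ --- leading to the identification of the maximal path with $\beta(k,i)$ --- should follow by entirely analogous surgeries based on $r^\nearrow_{[k,i]}(\epsilon)\ge 0$ and $r^\rightarrow_{[k,i]}(\epsilon)\ge 0$, paralleling Lemmas~\ref{sinks} and~\ref{sources} in the Gelfand--Zeitlin case.
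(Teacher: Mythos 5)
The pivotal observation in the paper's proof---which your plan misses---is the inequality
$c_{[j,j+1]}(\epsilon)=r^\rightarrow_{[j,j]}(\epsilon)-r^\searrow_{[j,j]}(\epsilon)\geq0$
for $1\leq j\leq n-1$, valid for every weighting $\epsilon$ satisfying \eqref{graphrel}. This says the $\Delta_0$-edge weights are non-decreasing in height, so your case distinction collapses: if a stopping path $P$ ends at the middle line at height $h_P$ and a continuing path $Q$ ends there at a lower height $h_Q<h_P$, interchanging their statuses (continue $P$ through $e_{h_P}$ and truncate $Q$ at the middle line, leaving the $\Gamma_0$-part $\gamma$ unchanged) changes the total weight by
$w_{e_{h_P}}-w_{e_{h_Q}}=\sum_{l=h_Q}^{h_P-1}c_{[l,l+1]}(\epsilon)\geq0$
while strictly decreasing $\sum\bb$. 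Hence either $\alpha$ was not maximal, or the new multipath is also maximal with smaller $\sum\bb$---a contradiction either way, with no rerouting inside $\Gamma_0$ at all.

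Your case of strict inequality $w_{e_{h_Q}}>w_{e_{h_P}}$ is therefore vacuous under the hypothesis on $\epsilon$, since non-decreasing $\Delta_0$-weights give $w_{e_{h_Q}}\leq w_{e_{h_P}}$; it is the inequality $c_{[l,l+1]}\geq 0$, not maximality alone, that closes off this case. The machinery you then set up---sliding $P$ down through $\Gamma_0$ via $r^\searrow_{[q,h_P-1]}$, and the worry about a ``deadlock'' when $Q$ sits at height $1$---is unnecessary. The operation of this lemma lives entirely in $\Delta_0$, not at the $\Gamma_0$--$\Delta_0$ interface, which is why it needs only the single $\Delta_0$-cell inequality and none of the more elaborate $\Gamma_0$-surgeries of Lemma~\ref{sinks}. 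Those surgeries are reserved for the subsequent normalizations (Lemmas~\ref{horna1} and~\ref{hornc1}), as you correctly anticipate at the end of your proposal.
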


\begin{proof}
To prove this, we observe that
\begin{equation}
  \label{cineq}
 c_{[j,j+1]}(\epsilon)\geq0\text{ for }1\leq j\leq n-1,
\end{equation}
since $c_{[j,j+1]}=r^\rightarrow_{[j,j]}-r^\searrow_{[j,j]}.$
Assume, contrary to the statement of the lemma, that for some $j\leq
k-i$, we have $b_j>b_j'$  (see  Figure \ref{2net:upperpaths}).
Then we obtain
$$w_{\tilde\alpha}(\epsilon)=w_{\alpha}(\epsilon)+c_{[b_j',b_j'+1]}(\epsilon)+\dots+c_{[b_j-1,b_j]}(\epsilon)\geq
w_\alpha(\epsilon),$$ for a multipath $\tilde\alpha$. Clearly,
$\tilde\alpha$ is maximal with
$\sum\bb_{\tilde\alpha}<\sum\bb_\alpha$, which contradicts our
assumption on $\alpha$.
\end{proof}

\begin{figure}[h]
\begin{tikzpicture}
\draw (-1,0) -- (8.5,0);
\draw (-1,0.5) -- (8.5,0.5);
\draw (-1,1) -- (8.5,1);
\draw (-1,1.5) -- (8.5,1.5);
\draw (-1,2) -- (8.5,2);
\draw (-1,2.5) -- (8.5,2.5);
\draw (0.5,2) -- (1,1.5);
\draw (2,2) -- (2.5,1.5);
\draw (3.5,2) -- (4,1.5);
\draw (5,2) -- (5.5,1.5);
\draw (1.5,1.5) -- (2,1);
\draw (3,1.5) -- (3.5,1);
\draw (4.5,1.5) -- (5,1);
\draw (2.5,1) -- (3,0.5);
\draw (4,1) -- (4.5,0.5);
\draw (3.5,0.5) -- (4,0);
\draw (-0.5,2.5) -- (0,2);
\draw (1,2.5) -- (1.5,2);
\draw (2.5,2.5) -- (3,2);
\draw (4,2.5) -- (4.5,2);
\draw (5.5,2.5) -- (6,2);
\draw[opacity=0.3] (6.5,0) -- (6.5,2.5);
\draw[fill=gray,fill opacity=0.15, line width=0.001] (6.5,0) -- (6.5,1) -- (8.5,1) -- (8.5,0) -- cycle;
\draw[ultra thick,red] (-1,2.02) -- (0.53,2.02) -- (1.03,1.52) -- (8.5,1.52);
\draw[dashed,ultra thick,blue] (-1,1.97) -- (0.5,1.97) -- (1,1.47) -- (8.5,1.47);
\draw[ultra thick,red] (-1,1.02) -- (6.5,1.02);
\draw[dashed,ultra thick,blue] (-1,0.97) -- (8.5,0.97);
\draw[ultra thick,red] (-1,0.52) -- (3.53,0.52) -- (4.03,0.02) -- (8.5,0.02);
\draw[dashed,ultra thick,blue] (-1,0.47) -- (3.5,0.47) -- (4,-0.03) -- (6.5,-0.03);
\draw (2.5,3) node{$\Gamma_0$};
\draw (7.5,3) node{$\Delta_0$};
\end{tikzpicture}
 \caption{$w_\alpha$ (solid) $=$ $w_{\tilde\alpha}$ (dashed) $-$ $\sum c_{[l,l+1]}$ (shaded).}\label{2net:upperpaths}
\end{figure}
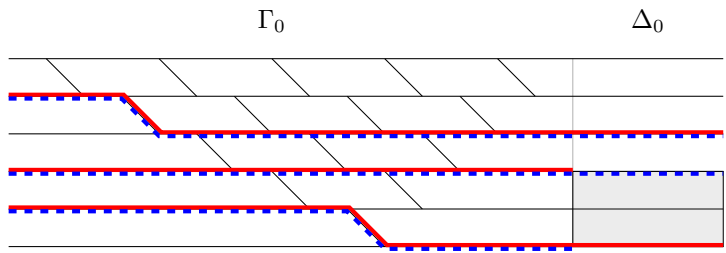

\begin{lem}\label{horna1}
   Let $\alpha\in \ppo ki$ be a maximal path for which
   $b_{k-i}=b'_{k-i}$ and the value of $a_1$ is maximal. Then $a_1=n$.
\end{lem}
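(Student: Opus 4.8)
The plan is to follow the first, easy half of the proof of Lemma~\ref{sources}: assuming $a_1<n$, I lift the topmost path of the $\Gamma_0$-part of $\alpha$ by one unit in height and derive a contradiction with the maximality of $a_1$. Throughout, $\epsilon$ denotes our fixed weighting of $\Gamma_0\circ\Delta_0$ satisfying \eqref{graphrel}.

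Write $\alpha=\gamma\cup\delta$ with $\gamma\in P_k\Gamma_0$ having sources $\ba=(a_1,\dots,a_k)$. I would first record the geometric facts I need: (i) since every edge of $\Gamma_0$ is non-increasing in height from left to right, the topmost path of $\gamma$ stays at height $\le a_1$ while the other paths stay at height $\le a_2<a_1$, so if $a_1<n$ then no vertex of $\Gamma_0$ on the line $y=a_1+1$ lies on $\alpha$; (ii) the cell $[a_1,0]$ exists (precisely because $a_1\le n-1$), its upper boundary runs along $y=a_1+1$, its lower boundary runs along $y=a_1$, and its right boundary is the leftmost slanted edge descending from $y=a_1+1$ to $y=a_1$; (iii) along the line $y=a_1$, reading from the left, the feet of slanted edges arriving from above alternate with the starts of slanted edges leaving downward, beginning with an arrival, so the topmost path of $\gamma$ must traverse the entire lower boundary of $[a_1,0]$ before it can descend, reaching its right endpoint $v$.

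Granting this, I would define $\tilde\gamma\in P_k\Gamma_0$ by leaving the lower $k-1$ paths of $\gamma$ unchanged and replacing the initial segment of the top path (from its source along $y=a_1$ up to $v$) by the segment that starts from the source at height $a_1+1$, runs along $y=a_1+1$ to the top end of that slanted edge, descends it to $v$, and then continues along the old top path. Since the line $y=a_1+1$ is otherwise free, $\tilde\gamma$ is a genuine $k$-path with the same sinks as $\gamma$; in particular it is still composable with $\delta$, so $\talpha:=\tilde\gamma\cup\delta\in\ppo ki$, and $\talpha$ still satisfies $b_{k-i}=b'_{k-i}$ because only the top path was altered and it remains the topmost one. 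By construction $\tilde\gamma$ and $\gamma$ differ exactly along the (oriented) boundary of the cell $[a_1,0]$, so
\[
w_{\talpha}(\epsilon)=w_\alpha(\epsilon)+c_{[a_1,0]}(\epsilon)=w_\alpha(\epsilon)+r^\nearrow_{[a_1,0]}(\epsilon)\geq w_\alpha(\epsilon)=m^k_i\Gamma_0\Delta_0(\epsilon),
\]
the inequality coming from \eqref{graphrel}. Hence $\talpha$ is also maximal, satisfies $b_{k-i}=b'_{k-i}$, and has first source $a_1+1>a_1$, contradicting the choice of $\alpha$; therefore $a_1=n$.

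This argument is essentially bookkeeping and I do not expect a real obstacle — it is the direct analogue of the ``first source'' part of Lemma~\ref{sources}. The one point needing (routine) care is fact (iii), the ``arrival-before-departure'' layout of $\Gamma_0$ along each horizontal line, since this is what guarantees that the rerouting changes $w_\gamma$ by exactly $c_{[a_1,0]}$; everything else is the verification that $\talpha$ is a legitimate element of $\ppo ki$ that still satisfies $b_{k-i}=b'_{k-i}$.
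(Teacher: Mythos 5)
Your argument is correct and is exactly the approach the paper intends: the paper simply says this lemma's proof is identical to the first (``$a_1=n$'') part of the proof of Lemma~\ref{sources}, namely raising the topmost path by adding $r^\nearrow_{[a_1,0]}=c_{[a_1,0]}\ge 0$. You have merely spelled out the routine geometric bookkeeping (the arrival-before-departure layout of $\Gamma_0$ along $y=a_1$, preservation of composability with $\delta$, of the sinks, and of the condition $b_{k-i}=b'_{k-i}$) that the paper leaves implicit.
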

The proof of this statement is identical to the first part of the
proof of Lemma \ref{sources}  (see Figure \ref{2net:uppersources}).

\begin{figure}[h]
\begin{tikzpicture}
\draw (-1,0) -- (8.5,0);
\draw (-1,0.5) -- (8.5,0.5);
\draw (-1,1) -- (8.5,1);
\draw (-1,1.5) -- (8.5,1.5);
\draw (-1,2) -- (8.5,2);
\draw (-1,2.5) -- (8.5,2.5);
\draw (0.5,2) -- (1,1.5);
\draw (2,2) -- (2.5,1.5);
\draw (3.5,2) -- (4,1.5);
\draw (5,2) -- (5.5,1.5);
\draw (1.5,1.5) -- (2,1);
\draw (3,1.5) -- (3.5,1);
\draw (4.5,1.5) -- (5,1);
\draw (2.5,1) -- (3,0.5);
\draw (4,1) -- (4.5,0.5);
\draw (3.5,0.5) -- (4,0);
\draw (-0.5,2.5) -- (0,2);
\draw (1,2.5) -- (1.5,2);
\draw (2.5,2.5) -- (3,2);
\draw (4,2.5) -- (4.5,2);
\draw (5.5,2.5) -- (6,2);
\draw[opacity=0.3] (6.5,0) -- (6.5,2.5);
\draw[fill=gray,fill opacity=0.15, line width=0.001] (-1,2) -- (0,2) -- (-0.5,2.5) -- (-1,2.5) -- cycle;
\draw[dashed,ultra thick,blue] (-1,2.52) -- (-0.47,2.52)  -- (0.03,2.02)-- (0.53,2.02) -- (1.03,1.52) -- (8.5,1.52);
\draw[ultra thick,red] (-1,1.97) -- (0.5,1.97) -- (1,1.47) -- (8.5,1.47);
\draw[dashed,ultra thick,blue] (-1,1.02) -- (8.5,1.02);
\draw[ultra thick,red] (-1,0.97) -- (8.5,0.97);
\draw[dashed,ultra thick,blue] (-1,0.52) -- (3.53,0.52) -- (4.03,0.02) -- (6.5,0.02);
\draw[ultra thick,red] (-1,0.47) -- (3.5,0.47) -- (4,-0.03) -- (6.5,-0.03);
\draw (2.5,3) node{$\Gamma_0$};
\draw (7.5,3) node{$\Delta_0$};
\end{tikzpicture}
 \caption{Raising the top source by adding $ r^\nearrow_{[j,0]}$ (shaded).}\label{2net:uppersources}
\end{figure}

\begin{lem}\label{hornc1}
  Let $i>0$ and let $\alpha\in \ppo ki$ be a maximal multipath for which
  $b_{k-i}=b'_{k-i}$, $a_1=n$ and the value of $c_1$ is maximal. Then
  $c_1=n$, and hence $\tau(n)\subset\alpha$.
\end{lem}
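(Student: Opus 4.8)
The plan is to argue by contradiction along the lines of the proof of Lemma \ref{sources}: assuming $c_1<n$, I would construct a maximal multipath $\tilde\alpha\in\ppo ki$ that still satisfies $b_{k-i}=b'_{k-i}$ and $a_1=n$ but has $c_{1,\tilde\alpha}>c_1$, contradicting the maximality of $c_1$.

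First I would locate the top path. Since $i>0$ and, by Lemma \ref{hornb}, $\alpha''$ is the union of the lowest $k-i$ paths of $\gamma$, the topmost path $P$ of $\alpha$ lies in $\alpha'$. As $a_1=n$, the path $P$ starts at the source $(a,n)$, runs through $\Gamma_0$, crosses the middle line, and continues through $\Delta_0$ to the sink at height $c_1$. Every edge of $\Gamma_0$ is horizontal or a down-slant, so $P$ descends monotonically and visits every integer level between $c_1$ and $n$; in particular, if $c_1<n$ then $P$ uses exactly one down-slant from $\tau(c_1+1)$ to $\tau(c_1)$ -- say the slanted left boundary edge of the cell $[c_1,j]$ for the appropriate $j$ -- and thereafter follows $\tau(c_1)$ inside $\Gamma_0$ and the level-$c_1$ horizontal line inside $\Delta_0$ all the way to the sink.

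Next I would replace $P$ by the path $\tilde P$ obtained by following $P$ up to the point where it descends and then continuing instead along $\tau(c_1+1)$ through $\Gamma_0$, across the middle line, and straight through $\Delta_0$ to the sink at height $c_1+1$. The symmetric difference of the two routes is exactly the horizontal band of cells
\[
   [c_1,j]\cup[c_1,j+1]\cup\dots\cup[c_1,c_1]\cup[c_1,c_1+1]
\]
lying between the levels $c_1$ and $c_1+1$, so
\[
   w_{\tilde P}(\epsilon)-w_P(\epsilon)=r^\rightarrow_{[c_1,j]}(\epsilon)\geq 0
\]
by \eqref{graphrel}. Setting $\tilde\alpha=(\alpha\setminus P)\cup\tilde P$, and using that $\tilde P$ lies weakly above $P$ and agrees with it outside the modified band, one checks that $\tilde\alpha$ is again a multipath in $\ppo ki$: since $c_1<n$, the new top sink $c_1+1$ stays strictly above $c_2$; the lowest $k-i$ paths are untouched, so $\bb_{\tilde\alpha}=\bb_\alpha$ and $a_{1,\tilde\alpha}=n$; and the largest element of $\bb'$ merely moves from $c_1$ to $c_1+1$, so the relation $b_{k-i}=b'_{k-i}$ persists. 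Hence $\tilde\alpha$ is a maximal multipath satisfying the hypotheses of the lemma with $c_{1,\tilde\alpha}=c_1+1>c_1$, a contradiction. Therefore $c_1=n$, and then $P$ can only be the full horizontal line $\tau(n)$, i.e. $\tau(n)\subset\alpha$.

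The step requiring the most care is the bookkeeping in the third paragraph: verifying that the route change contributes precisely the functional $r^\rightarrow_{[c_1,j]}$ (correctly accounting for the $\Delta_0$-cell $[c_1,c_1+1]$ and the slanted left edge, with all signs) and that the surgery genuinely keeps the multipath non-crossing while preserving the earlier normalizations $b_{k-i}=b'_{k-i}$ and $a_1=n$. A picture analogous to Figures \ref{net:sinks-down} and \ref{net:sources-up} should make this transparent.
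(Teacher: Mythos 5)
Your proof is correct and takes essentially the same approach as the paper: both proceed by contradiction, using the nonnegativity of the $r^\rightarrow$ functionals to raise the top sink $c_1$. The only cosmetic difference is that you raise the sink a single level (from $c_1$ to $c_1+1$), which already yields the contradiction, whereas the paper lifts the top path all the way to $\tau(n)$ in one step via the sum $\sum_{j=c_1}^{n-1} r^\rightarrow_{[j,q_j]}$; your bookkeeping of the band of cells, the resulting identity $w_{\tilde P}-w_P=r^\rightarrow_{[c_1,j]}$, and the preservation of $a_1=n$ and $b_{k-i}=b'_{k-i}$ are all sound.
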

\begin{proof}
  Since $b_{k-i}=b'_{k-i}$ for $i>0$, we know that the highest path
  of $\alpha$ is a path in $\Gamma_0\circ\Delta_0$. Assume that $c_1\neq
  n$. Then, as shown in
  Figure \ref{2net:uppersinks}, for an appropriate sequence $q_j,\,
  j=c_1,\dots,n-1$, we have
\[    w_{\tilde\alpha}=w_\alpha +\sum_{j=c_1}^{n-1} r^\rightarrow_{[j,q_j]}\geq0,
\]
where $\tilde\alpha\in\ppo ki$. Again, this contradicts the
assumptions of the lemma, hence $c_1=n$.
\end{proof}

\begin{figure}[h]
\begin{tikzpicture}
\draw (-1,0) -- (8.5,0);
\draw (-1,0.5) -- (8.5,0.5);
\draw (-1,1) -- (8.5,1);
\draw (-1,1.5) -- (8.5,1.5);
\draw (-1,2) -- (8.5,2);
\draw (-1,2.5) -- (8.5,2.5);
\draw (0.5,2) -- (1,1.5);
\draw (2,2) -- (2.5,1.5);
\draw (3.5,2) -- (4,1.5);
\draw (5,2) -- (5.5,1.5);
\draw (1.5,1.5) -- (2,1);
\draw (3,1.5) -- (3.5,1);
\draw (4.5,1.5) -- (5,1);
\draw (2.5,1) -- (3,0.5);
\draw (4,1) -- (4.5,0.5);
\draw (3.5,0.5) -- (4,0);
\draw (-0.5,2.5) -- (0,2);
\draw (1,2.5) -- (1.5,2);
\draw (2.5,2.5) -- (3,2);
\draw (4,2.5) -- (4.5,2);
\draw (5.5,2.5) -- (6,2);
\draw[opacity=0.3] (6.5,0) -- (6.5,2.5);
\draw[fill=gray,fill opacity=0.15, line width=0.001]  (0,2) -- (0.5,2) -- (1,1.5) -- (8.5,1.5) -- (8.5,2.5) -- (-0.5,2.5) -- cycle;
\draw[ultra thick,red] (-1,2.52) -- (-0.47,2.52)  -- (0.03,2.02)-- (0.53,2.02) -- (1.03,1.52) -- (8.5,1.52);
\draw[dashed,ultra thick,blue] (-1,2.47) --  (8.5,2.47);
\draw[ultra thick,red] (-1,1.02) -- (8.5,1.02);
\draw[dashed,ultra thick,blue] (-1,0.97) -- (8.5,0.97);
\draw[ultra thick,red] (-1,0.52) -- (3.53,0.52) -- (4.03,0.02) -- (6.5,0.02);
\draw[dashed,ultra thick,blue] (-1,0.47) -- (3.5,0.47) -- (4,-0.03) -- (6.5,-0.03);
\draw (2.5,3) node{$\Gamma_0$};
\draw (7.5,3) node{$\Delta_0$};
\end{tikzpicture}
 \caption{Raising the top sink by adding $\sum r^\rightarrow_{[j,l]}$ (shaded).}\label{2net:uppersinks}
\end{figure}

Now, using the proof of Theorem \ref{thm:GZ2}, we can quickly finish the proof of Theorem \ref{thm:Horn2} by
induction on $n$.

Indeed, Theorem \ref{thm:Horn2} is trivial for $n=1$.
Assume that the theorem has already been proved for the case of  rank $n-1$, and that
$\epsilon$ is a weighting satisfying \eqref{graphrel}. Then the fact
that $\beta(k,0)$ is maximal for $k=1,\dots,n$ is a special case of
Theorem \ref{thm:GZ2}. If $k\geq i>0$, then the sequence of lemmas
above shows that there is at least one maximal $\alpha\in\ppo ki$
containing $\tau(n)$. Proving that $\beta(k,i)$ is maximal
is then equivalent to showing that $\beta(k-1,i-1)$ is maximal for the
restriction of $\epsilon$ to the planar network obtained by removing
$\tau(n)$ from $\Gamma_0\circ\Delta_0$. This, however, follows from
our inductive assumption, which completes the proof.

\end{document}